\newtheorem{thm}{Theorem}[subsection]
\newtheorem{lemma}[thm]{Lemma}
\newtheorem{coro}[thm]{Corollary}
\newtheorem{prop}[thm]{Proposition}
\theoremstyle{definition}
\newtheorem{defn}[thm]{Definition}
\theoremstyle{remark}
\newtheorem{rem}[thm]{Remark}
\numberwithin{equation}{section}
\def\ZZ{\mathbb{Z}}
\def\QQ{\mathbb{Q}}
\newcommand{\gmat}[2][ccccccccccccccccccccccccccccccccc]{\left[\begin{array}{#1} #2\\ \end{array}\right]}
\def\A{\mathcal{A}}
\def\U{\mathcal{U}}
\def\R{\mathcal{R}}
\def\x{\mathbf{x}}
\def\y{\mathbf{y}}
\def\codim{\text{codim}}
\def\Q{\mathsf{Q}}
\def\S{\mathcal{S}}
\def\K{\mathcal{F}}
\def\Den{\Gamma}
\def\D{\mathbb{D}}
\def\sM{\mathsf{M}}
\def\sD{\mathsf{D}}
\def\sB{\mathsf{B}}
\def\Mat{\text{Mat}}
\def\nr{{n}}
\def\nm{{m}}
\tikzstyle{mutable}=[inner sep=0.5mm,circle,draw,minimum size=2mm]
\tikzstyle{frozen}=[inner sep=1mm,rectangle,draw]
\tikzstyle{marked}=[inner sep=0.5mm,circle,draw,fill=black!50]
\tikzstyle{outline}=[thick,line width=1.5mm,draw=black!10]
\title{Computing upper cluster algebras}
\author{Jacob Matherne}
\address{Department of Mathematics,
Louisiana State University, Baton Rouge, LA 70808, USA}
\email{jmath34@lsu.edu}
\author{Greg Muller}
\address{Department of Mathematics,
Louisiana State University, Baton Rouge, LA 70808, USA}
\email{gmuller@lsu.edu}
\thanks{$2010$ \emph{Mathematics Subject Classification.} Primary 13F60, Secondary 14Q99}
\thanks{\emph{Keywords:} Cluster algebras, upper cluster algebras, presentations of algebras, computational algebra}
\thanks{The second author was supported by the VIGRE program at LSU, National Science Foundation grant DMS-0739382.}
\begin{document}

\begin{abstract}
This paper develops techniques for producing presentations of upper cluster algebras.  These techniques are suited to computer implementation, and will always succeed when the upper cluster algebra is totally coprime and finitely generated.  We include several examples of presentations produced by these methods.
\end{abstract}

\maketitle

\section{Introduction}

\subsection{Cluster algebras}

Many notable varieties have a \emph{cluster structure}, in the following sense.  They are equipped with distinguished regular functions called \emph{cluster variables}, which are grouped into \emph{clusters}, each of which form a transcendence basis for the field of rational functions.  Each cluster is endowed with \emph{mutation} rules for moving to other clusters, and in this way, every cluster can be reconstructed from any other cluster.  Excamples of this include semisimple Lie groups \cite{BFZ05}, Grassmannians \cite{Sco06}, partial flag varieties \cite{GLS08}, moduli of local systems \cite{FG06}, and others.

The obvious algebra to consider in this situation is the \emph{cluster algebra} $\A$, the ring generated by the cluster variables.\footnote{Technically, the construction of the cluster algebra used in this note includes the inverses to a finite set.}  However, from a geometric perspective, the more natural algebra to consider is the \emph{upper cluster algebra} $\U$, defined by intersecting certain Laurent rings (see Remark \ref{rem: uppergeom} for the explicit geometric interpretation).

The \emph{Laurent phenomenon} guarantees that $\A\subseteq\U$.  This can be strengthened to an equality $\A=\U$ in many of the geometric examples and simpler classes of cluster algebras (such as acyclic and locally acyclic cluster algebras \cite{BFZ05,MulLA}).  In most cases where $\A=\U$ is known, the structures and properties of the algebra $\A=\U$ are fairly well-understood; for example, \cite[Corollary 1.21]{BFZ05} presents an acyclic cluster algebra as a finitely generated complete intersection.

However, there are examples where $\A\subsetneq \U$; the standard counterexample is the \emph{Markov cluster algebra} (see Remark \ref{rem: Markov} for details).  In these examples, both $\A$ and $\U$ are more difficult to work with directly, and either can exhibit pathologies. For example, the Markov cluster algebra is non-Noetherian \cite{MulLA}, and Speyer recently produced a non-Noetherian upper cluster algebra \cite{Speyer}.

\subsection{Presenting upper cluster algebras}

Nevertheless, because of its geometric nature, the authors expect that an upper cluster algebra $\U$ is generally better behaved than its cluster algebra $\A$.  This is supported in the few concretely understood examples where $\A\subsetneq\U$; however, the scarcity of examples makes investigating $\U$ difficult.

The goal of this note is to alleviate this problem by developing techniques to produce explicit presentations of $\U$.  The main tool is the following lemma, which gives several computationally distinct criteria for when a Noetherian ring $\S$ is equal to $\U$.
\begin{lemma}
If $\A$ is a cluster algebra with deep ideal $\D$, and $\S$ is a Noetherian ring such that $\A\subseteq \S\subseteq \U$, then the following are equivalent.
\begin{enumerate}
	\item $\S=\U$.
	\item $\S$ is normal and $\codim(\S\D)\geq2$.
	\item $\S$ is $S2$ and $\codim(\S\D)\geq 2$.
	\item $Ext^1_{\S}(\S/\S\D,\S)=0$.
	\item $\S f=(\S f:(\S \D)^\infty)$, where $f:=x_1x_2...x_\nm$ for some cluster $\x=\{x_1,...,x_\nr\}$.
\end{enumerate}
If $\S f\neq(\S f:(\S \D)^\infty)$, then $(\S f:(\S \D)^\infty)f^{-1}$ contains elements of $\U$ not in $\S$.
\end{lemma}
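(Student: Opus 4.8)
\emph{Overview of the strategy.}  Under the stated hypotheses I will show that each of (1)--(5) is equivalent to the single condition $\mathrm{depth}_{\S\D}(\S)\geq 2$, and then read off the last sentence.  Write $Y:=\mathrm{Spec}(\S)$, $Z:=V(\S\D)$, and $W:=Y\setminus Z$.  The essential input is a concrete model for $\U$.  For a cluster $\x'$ let $f_{\x'}$ be the product of its $\nm$ non-inverted variables, so that for the cluster $\x$ of the statement $f_{\x}$ agrees with $f=x_1x_2\cdots x_\nm$.  The Laurent phenomenon together with the fact that $\A$ already inverts the other $\nr-\nm$ variables gives $\A[f_{\x'}^{-1}]=\ZZ[{\x'}^{\pm 1}]$, and since $\A\subseteq\S\subseteq\U\subseteq\ZZ[{\x'}^{\pm 1}]$ this squeezes to $\S[f_{\x'}^{-1}]=\ZZ[{\x'}^{\pm 1}]$, so each cluster torus $D(f_{\x'})$ is the expected open subscheme of $Y$.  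Intersecting over the clusters used to define $\U$ and $\D$, and using that $V(\D)$ is the locus of $\mathrm{Spec}(\A)$ met by no cluster torus — so that $Z=\bigcap_{\x'}V(f_{\x'})$ after pullback along $\mathrm{Spec}(\S)\to\mathrm{Spec}(\A)$ — I obtain
\[
\U \;=\; \bigcap_{\x'}\S[f_{\x'}^{-1}] \;=\; H^0(W,\mathcal{O}_Y) \;=\; \bigl\{\,h\in\mathrm{Frac}(\S)\;:\;(\S\D)^k h\subseteq\S\text{ for some }k\geq 1\,\bigr\},
\]
the last equality using that $\S$ is Noetherian (so $\S\D$ is finitely generated, and a function regular on $W$ has bounded pole order along $Z$).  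Also $D(f)$ is a cluster torus, so it misses the deep locus, giving $Z\subseteq V(f)$ and hence $f\in\sqrt{\S\D}$.

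\emph{The equivalences (1)$\Leftrightarrow$(2)$\Leftrightarrow$(3)$\Leftrightarrow$(4).}  Since $\S$ is a domain and $\D\neq 0$ (a cluster torus is a nonempty open, so the deep locus is proper), $H^0_Z(\S)=\mathrm{Hom}_\S(\S/\S\D,\S)=0$.  The local cohomology sequence $0\to H^0_Z(\S)\to\S\to H^0(W,\mathcal{O}_Y)\to H^1_Z(\S)\to H^1(Y,\mathcal{O}_Y)=0$ (the last term vanishing as $Y$ is affine), together with the model above, shows that $\S\subseteq\U$ is an equality exactly when $H^1_Z(\S)=0$; given $H^0_Z(\S)=0$ this is the condition $\mathrm{depth}_{\S\D}(\S)\geq 2$, equivalently $\mathrm{Ext}^1_\S(\S/\S\D,\S)=0$.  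This gives (1)$\Leftrightarrow$(4).  If $\S$ is $S2$, i.e.\ $\mathrm{depth}(\S_\mathfrak{p})\geq\min(2,\dim\S_\mathfrak{p})$ for all $\mathfrak{p}$, then $\codim(\S\D)\geq 2$ forces $\dim\S_\mathfrak{p}\geq 2$ whenever $\mathfrak{p}\supseteq\S\D$, so (3) implies $\mathrm{depth}_{\S\D}(\S)\geq 2$; and (2)$\Rightarrow$(3) since normal rings are $S2$.  Conversely, if (1) holds then $\S=\U$ is an intersection of the integrally closed rings $\ZZ[{\x'}^{\pm 1}]$ inside their common fraction field, hence normal, and $\mathrm{depth}_{\S\D}(\S)\geq 2$ together with $\mathrm{grade}(\S\D,\S)\leq\mathrm{ht}(\S\D)=\codim(\S\D)$ forces $\codim(\S\D)\geq 2$; so (1)$\Rightarrow$(2), closing the cycle.

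\emph{Condition (5) and the last sentence.}  The submodule $(\S f:(\S\D)^\infty)/\S f$ of $\S/\S f$ is precisely $H^0_Z(\S/\S f)$.  Applying $H^\bullet_Z(-)$ to the short exact sequence $0\to\S\xrightarrow{f}\S\to\S/\S f\to 0$ (where $f$ is a nonzerodivisor and $H^0_Z(\S)=0$) identifies $H^0_Z(\S/\S f)$ with $\ker\!\bigl(H^1_Z(\S)\xrightarrow{f}H^1_Z(\S)\bigr)$; since every element of $H^1_Z(\S)$ is killed by a power of $\S\D$ and $f\in\sqrt{\S\D}$, it is killed by a power of $f$, so that kernel vanishes if and only if $H^1_Z(\S)=0$.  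Hence (5) is equivalent to $H^1_Z(\S)=0$, and therefore to the other four conditions.  The final claim is now immediate: given $g\in(\S f:(\S\D)^\infty)\setminus\S f$, set $h:=gf^{-1}$; from $(\S\D)^k g\subseteq\S f$ we get $(\S\D)^k h\subseteq\S$, so $h\in\U$ by the model above, while $h\in\S$ would force $g=fh\in\S f$, a contradiction.  Thus $(\S f:(\S\D)^\infty)f^{-1}$ contains the element $h$ of $\U\setminus\S$.

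\emph{The main obstacle.}  The real content lies in the first step — extracting from the definition of the deep ideal the precise equality $Z=\bigcap_{\x'}V(f_{\x'})$ over a set of clusters large enough that $H^0(W,\mathcal{O}_Y)$ is genuinely $\U$ and not merely a proper subring (this is where the Laurent phenomenon, the structure of $\D$, and presumably total coprimality enter).  Everything afterward is standard commutative algebra, the only delicate bookkeeping being the reduction of (5) through the nonzerodivisor sequence and the observation that $H^1_Z(\S)$ is $f$-power-torsion.
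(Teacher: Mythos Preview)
Your argument is correct, but the route differs from the paper's.  The paper does not prove this lemma from scratch; it is stated as a specialization of two earlier general results about the ring $\Gamma(R,I):=\bigcap_{r\in I}R[r^{-1}]$ (their notation for your $H^0(W,\mathcal{O}_Y)$), combined with the identification $\U=\Gamma(\A,\D)$ and the normality of $\U$.  For (5) the paper gives a bare-hands minimal-exponent argument: if $Sf=(Sf:(SI)^\infty)$ and $g\in\Gamma(R,I)$, take $n$ minimal with $f^ng\in S$; then $f(f^{n-1}g)\in(Sf:(SI)^\infty)=Sf$ forces $n=0$.  For the chain (2)$\Rightarrow$(3)$\Rightarrow$(4)$\Rightarrow$(1) the paper argues the contrapositive of the last step by hand: given $g\in\Gamma(R,I)\setminus S$ with $g\cdot SI\subseteq S$, the essential extension $0\to S\to Sg\to Sg/S\to 0$ and a surjection $S/SI\twoheadrightarrow Sg/S$ with torsion kernel force $\mathrm{Ext}^1_S(S/SI,S)\neq 0$.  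The reverse implication (1)$\Rightarrow$(2) uses that $\U$ is normal, and then a direct DVR computation at a height-$1$ prime over $\S\D$ to produce an element of $\Gamma(\S,\S\D)\setminus\S$ if $\codim(\S\D)=1$.

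Your approach packages all of this into local cohomology: (1)$\Leftrightarrow$(4) from the exact sequence $0\to\S\to H^0(W,\mathcal{O}_Y)\to H^1_Z(\S)\to 0$, and (5) via the long exact sequence for $0\to\S\xrightarrow{f}\S\to\S/\S f\to 0$ together with the observation that $H^1_Z(\S)$ is $f$-power torsion.  This is more uniform and conceptual; the paper's version is more elementary (no derived functors) and slightly more constructive, since its $\mathrm{Ext}^1$ argument literally exhibits the obstructing extension.  One small correction to your closing remarks: total coprimality plays no role here.  The identity $Z=\bigcap_{\x'}V(f_{\x'})$ is immediate from the definition of $\D$ as the ideal generated by the $f_{\x'}$, and your squeeze $\S[f_{\x'}^{-1}]=\ZZ[{\x'}^{\pm1}]$ needs only $\A\subseteq\S\subseteq\U$.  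Total coprimality enters only in the companion lemma, where $(\A,\U,\D)$ are replaced by the finitely-presented objects $(L_\x,\U_\x,\D_\x)$ and one needs $\U_\x=\U$.
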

\noindent Here, the \emph{deep ideal} $\D$ is the ideal in $\A$ defined by the products of the mutable cluster variables (see Section \ref{section: uppergeom}).

This lemma is constructive, in that a negative answer to condition $(5)$ explicitly provides new elements of $\U$.  Even without a clever guess for a generating set of $\U$, iteratively checking this criterion and adding new elements can produce a presentation for $\U$.  Speyer's example demonstrates that this algorithm cannot always work; however, if $\U$ is finite, this approach will always produce a generating set (Corollary \ref{coro: finitealgo}).

Naturally, we include several examples of these explicit presentations.  Sections \ref{section: examples1} and \ref{section: examples2} contain presentations for the upper cluster algebras of the seeds pictured in Figure \ref{fig: listofseeds}.

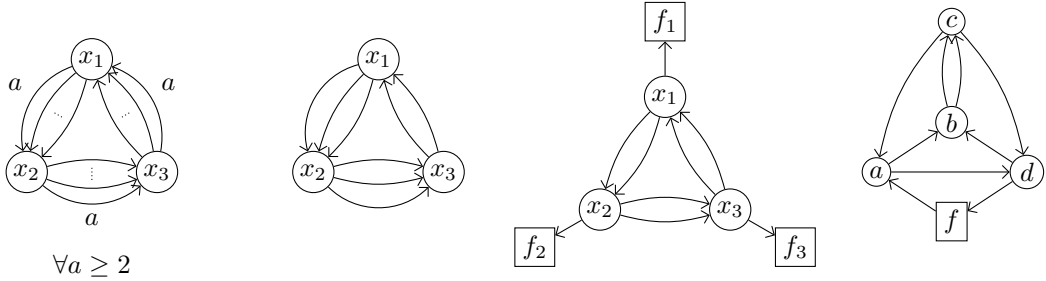
\begin{figure}[h!t]
	\begin{tikzpicture}
	\begin{scope}
		\path[use as bounding box] (-1,-1) rectangle  (1,1.25);
		\node[mutable] (x) at (90:1) {$x_1$};
		\node[mutable] (y) at (210:1) {$x_2$};
		\node[mutable] (z) at (-30:1) {$x_3$};
		\draw[-angle 90,relative, out=15,in=165] (x) to node[inner sep=.1cm] (xy1m) {} (y);
		\draw[-angle 90,relative, out=-20,in=-160] (x) to node[inner sep=.1cm] (xy2m) {} (y);
		\draw[-angle 90,relative, out=-40,in=-140] (x) to node[above left] {$a$} (y); 
		\draw[densely dotted] (xy1m) to (xy2m);
		\draw[-angle 90,relative, out=15,in=165] (y) to node[inner sep=.1cm] (yz1m) {} (z);
		\draw[-angle 90,relative, out=-20,in=-160] (y) to node[inner sep=.1cm] (yz2m) {} (z);
		\draw[-angle 90,relative, out=-40,in=-140] (y) to node[below] {$a$}  (z);
		\draw[densely dotted] (yz1m) to  (yz2m);
		\draw[-angle 90,relative, out=15,in=165] (z) to node[inner sep=.1cm] (zx1m) {} (x);
		\draw[-angle 90,relative, out=-20,in=-160] (z) to node[inner sep=.1cm] (zx2m) {} (x);
		\draw[-angle 90,relative, out=-40,in=-140] (z) to node[above right] {$a$}  (x);
		\draw[densely dotted] (zx1m) to  (zx2m);
		\node at (0,-1.75) {$\forall a\geq2$};
	\end{scope}
	\begin{scope}[xshift=1.5in]
		\node[mutable] (x) at (90:1) {$x_1$};
		\node[mutable] (y) at (210:1) {$x_2$};
		\node[mutable] (z) at (-30:1) {$x_3$};
		\draw[-angle 90,relative, out=15,in=165] (x) to (y);
		\draw[-angle 90,relative, out=-15,in=-165] (x) to (y);
		\draw[-angle 90,relative, out=-45,in=-495] (x) to (y);
		\draw[-angle 90,relative, out=15,in=165] (y) to (z);
		\draw[-angle 90,relative, out=-15,in=-165] (y) to (z);
		\draw[-angle 90,relative, out=-45,in=-495] (y) to (z);
		\draw[-angle 90,relative, out=15,in=165] (z) to (x);
		\draw[-angle 90,relative, out=-15,in=-165] (z) to (x);
	\end{scope}
	\begin{scope}[xshift=3in,yshift=-.5cm]
		\node[mutable] (x1) at (90:1) {$x_1$};
		\node[mutable] (x2) at (210:1) {$x_2$};
		\node[mutable] (x3) at (-30:1) {$x_3$};
        \node[frozen] (f1) at (90:2) {$f_1$};
        \node[frozen] (f2) at (210:2) {$f_2$};
        \node[frozen] (f3) at (-30:2) {$f_3$};
		\draw[-angle 90,relative, out=15,in=165] (x1) to (x2);
		\draw[-angle 90,relative, out=-15,in=-165] (x1) to (x2);
		\draw[-angle 90,relative, out=15,in=165] (x2) to (x3);
		\draw[-angle 90,relative, out=-15,in=-165] (x2) to (x3);
		\draw[-angle 90,relative, out=15,in=165] (x3) to (x1);
		\draw[-angle 90,relative, out=-15,in=-165] (x3) to (x1);
        \draw[-angle 90] (x1) to (f1);
        \draw[-angle 90] (x2) to (f2);
        \draw[-angle 90] (x3) to (f3);
	\end{scope}
	\begin{scope}[xshift=4.5in,yshift=-.5cm]
		\node[mutable] (a) at (-1,0) {$a$};
		\node[mutable] (b) at (0,.66) {$b$};
		\node[mutable] (c) at (0,2) {$c$};
		\node[mutable] (d) at (1,0) {$d$};
		\node[frozen] (f) at (0,-.66) {$f$};
		\draw[-angle 90] (f) to (a);
		\draw[-angle 90] (a) to (b);
		\draw[-angle 90, out=105,in=255] (b) to (c);
		\draw[-angle 90,relative,out=-15,in=195] (c) to (a);
		\draw[-angle 90] (a) to (d);
		\draw[-angle 90] (d) to (b);
		\draw[-angle 90, out=75,in=285] (b) to (c);
		\draw[-angle 90,relative,out=15,in=165] (c) to (d);
		\draw[-angle 90] (d) to (f);
	\end{scope}	
	\end{tikzpicture}
\caption{Seeds of upper cluster algebras presented in this note.}
\label{fig: listofseeds}
\end{figure}

\begin{rem}
To compute examples, we use a variation of Lemma \ref{lemma: criterion} involving \emph{lower bounds} and \emph{upper bounds}, which requires that our cluster algebras are \emph{totally coprime}.
\end{rem}

%
%
%
%

\section{Cluster algebras}

\emph{Cluster algebras} are a class of commutative unital domains.  Up to a finite localization, they are generated in their field of fractions by distinguished elements, called \emph{cluster variables}. The cluster variables (and hence the cluster algebra) are produced by an recursive procedure, called \emph{mutation}.  While cluster algebras are geometrically motivated, their construction is combinatorial and determined by some simple data called a 'seed'.

\subsection{Ordered seeds}

A matrix $\sM\in \Mat_{m,m}(\ZZ)$ is \emph{skew-symmetrizable} if there is a non-negative, diagonal matrix $\sD\in \Mat_{m,m}(\ZZ)$ such that $\sD\sM$ is skew-symmetric; that is, that $(\sD\sM)^\top=-\sD\sM$.

Let $\nr \geq \nm\geq0$ be integers, and let $\sB\in \Mat_{\nr,\nm}(\ZZ)$ be an integer valued $\nr\times \nm$-matrix.  Let $\sB^0\in \Mat_{\nm,\nm}(\ZZ)$ be the \emph{principal part}, the submatrix of $\sB$ obtained by deleting the last $\nr-\nm$ rows.

An \textbf{ordered seed} is a pair $(\x,\sB)$ such that...
\begin{itemize}
	\item $\sB\in \Mat_{\nr,\nm}(\ZZ)$,
	\item $\sB^0$ is skew-symmetrizable, and
	\item $\x=(x_1,...,x_\nr)$ is an $\nr$-tuple of elements in a field $\K$ of characteristic zero, which is a free generating set for $\K$ as a field over $\QQ$.
\end{itemize}
The various parts of an ordered seed have their own names.
\begin{itemize}
	\item The matrix $\sB$ is the \emph{exchange matrix}.
	\item The $\nr$-tuple $\x$ is the \emph{cluster}.
	\item Elements $x_i\in \x$ are \emph{cluster variables}.\footnote{Many authors do not consider frozen variables to be cluster variables, instead referring to them as `geometric coefficients', following \cite{FZ07}.}  These are further subdivided by index.
	\begin{itemize}
		\item If $0< i\leq \nm$, $x_j$ is a \emph{mutable variable}.
		\item If $\nm< i\leq \nr$, $x_j$ is a \emph{frozen variable}.
	\end{itemize}
\end{itemize}

The ordering of the cluster variables in $\x$ is a matter of convenience.  A permutation of the cluster variables which preserves the flavor of the cluster variable (mutable/frozen) acts on the ordered seed by reordering $\x$ and conjugating $\sB$.


A skew-symmetric seed $(\x,\sB)$ can be diagrammatically encoded as an \emph{ice quiver} (Figure \ref{fig: seedquiver}).	Put each mutable variable $x_i$ in a circle, and put each frozen variable $x_i$ in a square. For each pair of indices $i<j$ with $i\leq \nm$, add $\sB_{ji}$ arrows from $i$ to $j$, where `negative arrows' go from $j$ to $i$.

\begin{figure}[h!t]
	\begin{tikzpicture}
	\begin{scope}
		\node (array) at (0,0) {$\left(\x=\{x_1,x_2,x_3\},\sB=\gmat{0 & -3  \\ 3 & 0 \\ -2 & 1 }\right)$};
	\end{scope}
	\begin{scope}[xshift=2.75in]
		\node (label) at (-2,0) {$\Q=$};
		\node[mutable] (x) at (90:1) {$x_1$};
		\node[mutable] (y) at (210:1) {$x_2$};
		\node[frozen] (z) at (-30:1) {$x_3$};
		\draw[-angle 90,relative, out=15,in=165] (x) to (y);
		\draw[-angle 90,relative, out=-15,in=-165] (x) to (y);
		\draw[-angle 90,relative, out=-45,in=-495] (x) to (y);
		\draw[-angle 90,relative, out=-15,in=-165] (y) to (z);
		\draw[-angle 90,relative, out=15,in=165] (z) to (x);
		\draw[-angle 90,relative, out=-15,in=-165] (z) to (x);
	\end{scope}
	\end{tikzpicture}
\caption{The ice quiver associated to a seed}
\label{fig: seedquiver}
\end{figure}
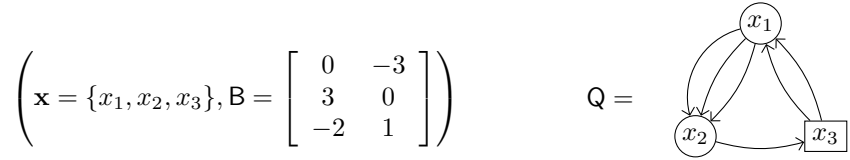

A seed $(\x,\sB)$ is called \emph{acyclic} if $\Q$ does not contain a directed cycle of mutable vertices.  The seed in Figure \ref{fig: seedquiver} is acyclic.

\subsection{Cluster algebras}

Given an ordered seed $(\x,\sB)$ and some $1\leq k\leq \nm$, define the \emph{mutation} of $(\x,\sB)$ at $k$ to be the ordered seed $(\x',\sB')$, where
\[ x_i':= \left\{\begin{array}{cc}
\left(\prod_{\sB_{jk}>0} x_j^{\sB_{jk}} + \prod_{\sB_{jk}<0} x_j^{-\sB_{jk}}\right)x_i^{-1} & \text{if $i=k$} \\
x_i & \text{otherwise}
\end{array}\right\}\]

\[ \sB_{ij}':= \left\{\begin{array}{cc}
-\sB_{ij} & \text{if $i=k$ or $j=k$} \\
\sB_{ij} + \frac{|\sB_{ik}|\sB_{kj}+ \sB_{ik}|\sB_{kj}|}{2} & \text{otherwise}
\end{array}\right\}\]
Since $(\x',\sB')$ is again an ordered seed, mutation may be iterated at any sequence of indices in $1,2,...,\nm$.  Mutation twice in a row at $k$ returns to the original ordered seed.

Two ordered seeds $(\x,\sB)$ and $(\y,\mathsf{C})$ are \emph{mutation-equivalent} if $(\y,\mathsf{C})$ is a obtained from $(\x,\sB)$ by a sequence of mutations and permutations.

\begin{defn}
Given an ordered seed $(\x,\sB)$, the associated \textbf{cluster algebra} $\A(\x,\sB)$ is the subring of the ambient field $\K$ generated by
\[\{x_i^{-1} \mid \nm<i\leq \nr\} \cup \bigcup_{(\mathbf{y},\mathsf{C})\sim (\x,\sB)}\mathbf{y}\] 
\end{defn}

A \emph{cluster variable} in $\A(\x,\sB)$ is a cluster variable in any ordered seed mutation-equivalent to $(\x,\sB)$, and it is mutable or frozen based on its index in any seed. A \emph{cluster} in $\A(\x,\sB)$ is a set of cluster variables appearing as the cluster in some ordered seed.  Mutation-equivalent seeds define the same cluster algebra $\A$.  The seed will often be omitted from the notation when clear.


A cluster algebra $\A$ is \textbf{acyclic} if there exists an acyclic seed of $\A$; usually, an acyclic cluster algebra will have many non-acyclic seeds as well.  Acyclic cluster algebras have proven to be the most easily studied class; for example, \cite[Corollary 1.21]{BFZ05} gives a presentation of $\A$ with $2\nr$ generators and $\nr$ relations.

\subsection{Upper cluster algebras}

A basic tool in the theory of cluster algebras is the following theorem, usually called the \emph{Laurent phenomenon}.

\begin{thm}[Theorem 3.1, \cite{FZ02}]
Let $\A$ be a cluster algebra, and $\x=\{x_1,x_2,...,x_{\nr}\}$ be a cluster in $\A$.  As subrings of $\K$,
\[ \A \subset \ZZ[x_1^{\pm1},...,x_\nr^{\pm1}]\]
This is the localization of $\A$ at the mutable variables $x_1,...,x_\nm$.
\end{thm}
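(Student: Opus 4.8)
The statement has two parts: the containment $\A\subseteq \ZZ[x_1^{\pm1},\dots,x_\nr^{\pm1}]$, which is the \emph{Laurent phenomenon} itself, and the identification of this Laurent ring with the localization of $\A$ at the mutable variables. The second part is formal once the first is known, so the plan is to establish the Laurent phenomenon first (following Fomin--Zelevinsky) and then read off the localization statement.

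For the Laurent phenomenon I would induct on the number $d$ of mutations needed to express a given cluster variable $z$ in terms of the cluster $\x$. If $d=0$ then $z\in\x$, and if $d=1$ then $z$ is obtained from a single exchange relation $x_kx_k'=M^++M^-$ with $M^+,M^-$ monomials in the remaining $x_j$; both cases are immediate. The obstacle in the inductive step is that the exchange relation introduces a new cluster variable as a ratio, so applying further mutations to a Laurent polynomial need not return a Laurent polynomial: poles could appear. Controlling these poles is precisely the role of Fomin--Zelevinsky's \emph{caterpillar lemma}. Inspecting the last two mutations producing $z$: if they are at the same index they cancel and one falls back to a shorter sequence, while if they are at distinct indices one shows that the two ``new'' cluster variables created along the way are coprime in the Laurent polynomial ring $\ZZ[x_1^{\pm1},\dots,x_\nr^{\pm1}]$, and coprime to each $x_i$. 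Since this ring is a UFD, an element of $\K$ that the inductive hypothesis presents as a Laurent polynomial after inverting the first of these new variables, and also as a Laurent polynomial after inverting the second, can have no pole along either of the two corresponding hypersurfaces, hence already lies in $\ZZ[x_1^{\pm1},\dots,x_\nr^{\pm1}]$. I expect the coprimality bookkeeping here --- checking that the relevant exchange binomials and monomials share no factor, uniformly along the caterpillar --- to be the technical heart; the rest is routine manipulation of exchange relations.

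Given $\A\subseteq \ZZ[x_1^{\pm1},\dots,x_\nr^{\pm1}]$, the localization claim is elementary, since all the rings in sight are subrings of $\K$, the fraction field of $\A$. For one inclusion: the Laurent ring contains $(x_1\cdots x_\nm)^{-1}$, so it contains the localization $\A[(x_1\cdots x_\nm)^{-1}]$. For the reverse: $\A[(x_1\cdots x_\nm)^{-1}]$ contains every $x_i$ (a cluster variable of the cluster $\x$, hence an element of $\A$), contains $x_i^{-1}$ for $\nm<i\leq\nr$ by the definition of $\A$, and contains $x_i^{-1}$ for $1\leq i\leq\nm$ by construction; together with $\ZZ\subseteq\A$ this gives $\ZZ[x_1^{\pm1},\dots,x_\nr^{\pm1}]\subseteq \A[(x_1\cdots x_\nm)^{-1}]$. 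Hence the two rings coincide.
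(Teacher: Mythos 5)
The paper cites this result directly from \cite{FZ02} and offers no proof of its own, so there is no internal argument to compare against; I will therefore just evaluate your proposal on its merits. Your sketch of the first claim (the Laurent phenomenon proper) correctly recalls the Fomin--Zelevinsky inductive strategy via the caterpillar lemma: reduce to short mutation sequences, show the two relevant exchange polynomials are coprime to each other and to the initial cluster variables in the UFD $\ZZ[x_1^{\pm1},\dots,x_\nr^{\pm1}]$, and conclude that a Laurent polynomial regular after inverting either one is already a Laurent polynomial. You are right that the coprimality bookkeeping is the technical crux, and it is fine to defer it to \cite{FZ02} since the paper itself does the same. Your treatment of the second claim is complete and correct, and worth spelling out since the paper leaves it implicit: granting the Laurent phenomenon, localizing $\A$ at $x_1\cdots x_\nm$ lands inside $\ZZ[x_1^{\pm1},\dots,x_\nr^{\pm1}]$ because that ring already contains $\A$ and $(x_1\cdots x_\nm)^{-1}$; conversely, $\A[(x_1\cdots x_\nm)^{-1}]$ contains $\ZZ$, every $x_i$, the inverses of the frozen $x_i$ (which this paper's definition of $\A$ builds in), and the inverses of the mutable $x_i$ by construction of the localization, hence contains the whole Laurent ring. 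Both directions are exactly as you say, so the two rings coincide.
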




The theorem says elements of $\A$ can be expressed as Laurent polynomials in many different sets of variables (one such expression for each cluster).  The set of all rational functions in $\K$ with this property is an important algebra in its own right, and the central object of study in this note.

\begin{defn}
Given a cluster algebra $\A$, the \textbf{upper cluster algebra} $\U$ is defined
\[ \U := \bigcap_{\substack{\text{clusters} \\ \x=\{x_1,...,x_\nr\} \text{ in }\A }} \ZZ[x_1^{\pm1},...,x_\nr^{\pm1}] \subset \K\]
\end{defn}

The Laurent phenomenon is equivalent to the containment $\A\subseteq \U$.

\begin{prop}\cite[Proposition 2.1]{MulLA}\label{prop: Unormal}
Upper cluster algebras are normal.
\end{prop}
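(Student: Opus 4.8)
The plan is to exploit the fact that $\U$ is defined as an intersection of Laurent polynomial rings over $\ZZ$, and that each such ring is itself normal (indeed regular). The general principle is that an intersection of normal subrings of a fixed field, all sharing the same fraction field, is again normal. I would first establish this principle, then verify its hypotheses apply here.

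First I would check the fraction fields. Each ring $\ZZ[x_1^{\pm1},\ldots,x_\nr^{\pm1}]$ appearing in the intersection is a Laurent polynomial ring over $\ZZ$; since $\x$ is a free generating set for $\K$ over $\QQ$, the fraction field of this Laurent ring is exactly $\K$. Likewise, $\U$ contains $\A$, which contains every cluster variable and hence generates $\K$ as a field, so $\operatorname{Frac}(\U) = \K$ as well. Thus all the rings in sight are subrings of $\K$ with fraction field $\K$.

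Next I would recall that a Laurent polynomial ring over $\ZZ$ is normal: $\ZZ$ is a normal (even regular) domain, polynomial extensions of normal domains are normal, and localizations of normal domains are normal, so $\ZZ[x_1^{\pm1},\ldots,x_\nr^{\pm1}]$ is integrally closed in $\K$. Then I would prove the intersection principle: if $\{R_\alpha\}$ is a family of subrings of a field $\K$, each integrally closed in $\K$, then $R = \bigcap_\alpha R_\alpha$ is integrally closed in $\K$. This is a one-line argument: an element $z \in \K$ integral over $R$ is integral over each $R_\alpha$ (the monic relation has coefficients in $R \subseteq R_\alpha$), hence lies in each $R_\alpha$ by their integral closedness, hence lies in $R$. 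Applying this with $R_\alpha$ ranging over the Laurent rings $\ZZ[x_1^{\pm1},\ldots,x_\nr^{\pm1}]$ gives that $\U$ is integrally closed in $\K = \operatorname{Frac}(\U)$, i.e.\ $\U$ is normal.

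I do not anticipate a serious obstacle here; the only point requiring any care is confirming that the ambient field $\K$ really is the common fraction field of $\U$ and of every Laurent ring in the intersection, which follows immediately from the defining property that a cluster is a free generating set for $\K$ over $\QQ$. Everything else is the standard fact that normality is preserved under arbitrary intersections inside a fixed field.
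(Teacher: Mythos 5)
Your proof is correct and takes essentially the same approach the paper itself indicates: the proposition is cited from \cite{MulLA}, and the immediately following Remark~\ref{rem: internormal} (``any intersection of normal domains in a fraction field is normal'') is precisely the intersection principle you establish and apply. One small simplification you could make: you do not actually need to verify that $\operatorname{Frac}(\U) = \K$, since a subring of $\K$ that is integrally closed in $\K$ is automatically integrally closed in its own (smaller or equal) fraction field.
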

\begin{rem}\label{rem: internormal}
Any intersection of normal domains in a fraction field is normal.
\end{rem}

\subsection{Lower and upper bounds}\label{section: bounds}

The cluster algebras that have finitely many clusters have an elegant classification by Dynkin diagrams \cite{FZ03}.  However, such \emph{finite-type} cluster algebras are quite rare; even the motivating examples are frequently \emph{infinite-type}.  Working with infinite-type $\A$ or $\U$ can be daunting because the definitions involve infinite generating sets or intersections (this is especially a problem for computer computations).

Following \cite{BFZ05}, to any seed $\x$, we associate bounded analogs of $\A$ and $\U$ called \emph{lower} and \emph{upper bounds}.  The definitions are the same, except the only seeds considered are $\x$ and those seeds a single mutation away from $\x$.

As a standard abuse of notation, for a fixed seed $(\x=\{x_1,x_2,...,x_{\nr}\},\sB)$, let $x_i'$ denote the mutation of $x_i$ in $(\x,\sB)$.

\begin{defn} Let $(\x,\sB)$ be a seed in $\K$.

The \textbf{lower bound} $L_{\x}$ is the subring of $\K$ generated by $\{x_1,x_2,...,x_{\nr}\}$, the one-step mutations $\{x_1',x_2',...,x_{\nm}'\}$, and the inverses to invertible frozen variables $\{x_{\nm+1}^{-1},...,x_{\nr}^{-1}\}$.

The \textbf{upper bound} $\U_{\x}$ is the intersection in $\K$ of the $n+1$ Laurent rings corresponding to $\mathbf{x}$ and its one-step mutations.
\[ \U_{\mathbf{x}} := \mathbb{Z}[x_1^{\pm1},...,x_{\nr}^{\pm1}]\cap \bigcap_i \mathbb{Z}[x_1^{\pm1},...,x_{i-1}^{\pm1},x_i'^{\pm1},x_{i+1}^{\pm1}...,x_{\nr}^{\pm1}]\]
\end{defn}

The names `lower bound' and `upper bound' are justified by the obvious inclusions
\[ L_\x\subseteq \A\subseteq \U\subseteq \U_\x\]

When does $\U=\U_\x$?
A seed $(\x,\sB)$ is \emph{coprime} if every pair of columns in $\sB$ are linearly independent.
A cluster algebra is \textbf{totally coprime} if every seed is coprime.
\begin{thm}[Corollary 1.7, \cite{BFZ05}]\label{thm: tpupper}
If $\A$ is totally coprime, then $\U=\U_\x$ for any seed $(\x,\sB)$.
\end{thm}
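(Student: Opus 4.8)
The plan is to prove the two inclusions in $\U = \U_\x$ separately. One of them is free: the excerpt records $\U \subseteq \U_\x$ among the obvious inclusions $L_\x \subseteq \A \subseteq \U \subseteq \U_\x$, since $\U$ intersects the Laurent rings of \emph{all} clusters of $\A$ while $\U_\x$ intersects only those of $\x$ and its one-step mutations. So the entire content is the reverse inclusion $\U_\x \subseteq \U$, i.e.\ $\U_\x \subseteq \ZZ[y_1^{\pm 1}, \ldots, y_\nr^{\pm 1}]$ for every cluster $\y = \{y_1, \ldots, y_\nr\}$ of $\A$.

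I would base everything on a local invariance statement: \emph{if $(\x, \sB)$ and its one-step mutation $\x' := \mu_k(\x)$ are both coprime, then $\U_\x = \U_{\x'}$}. Granting this, the theorem follows at once. Every cluster $\y$ of $\A$ occurs in a seed mutation-equivalent to $(\x, \sB)$, hence is joined to $\x$ by a finite chain of mutations $\x = \x^{(0)}, \x^{(1)}, \ldots, \x^{(\ell)} = \y$ (the intervening permutations change nothing, as $\U_\x$ depends only on the unordered seed). Since $\A$ is totally coprime, every seed $\x^{(i)}$ along the chain is coprime, so the local statement applies at each step and yields $\U_\x = \U_{\x^{(1)}} = \cdots = \U_\y$. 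By the definition of the upper bound, $\U_\y \subseteq \ZZ[y_1^{\pm 1}, \ldots, y_\nr^{\pm 1}]$; therefore $\U_\x \subseteq \ZZ[y_1^{\pm 1}, \ldots, y_\nr^{\pm 1}]$, and intersecting over all clusters $\y$ gives $\U_\x \subseteq \U$, completing the proof.

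Everything thus reduces to the local invariance $\U_\x = \U_{\x'}$, which I expect to be the main obstacle. Both $\U_\x$ and $\U_{\x'}$ are finite intersections of Laurent rings — one for the cluster itself and one for each one-step mutation — and two of these rings are common to both: $\ZZ[\x^{\pm}]$ is the bare term of $\U_\x$ and, because mutating twice at $k$ returns the original seed, the mutation-at-$k$ term of $\U_{\x'}$; symmetrically $\ZZ[(\x')^{\pm}]$ occurs in both. Hence $\U_\x$ and $\U_{\x'}$ are both contained in $R := \ZZ[\x^{\pm}] \cap \ZZ[(\x')^{\pm}]$, and the claim reduces to showing that the remaining conditions of Laurent type — obtained by mutating $\x$, respectively $\x'$, in the directions $j \neq k$ — impose the same constraint on an element of $R$. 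I would check this by working in the ambient Laurent polynomial ring, which is a unique factorization domain, and tracking denominators: membership in the $j$-th Laurent ring amounts to a suitable power of the exchange binomial $\prod_i x_i^{\max(\sB_{ij},0)} + \prod_i x_i^{\max(-\sB_{ij},0)}$ dividing the cleared-denominator form of the element, and one must verify this divisibility is unaffected when $\sB$ is replaced by $\sB'$, using the explicit exchange relation at $k$. This is precisely where coprimality is needed: linear independence of the distinct columns of $\sB$ forces the corresponding exchange binomials to be pairwise coprime (up to units) in the relevant localization, so the divisibility conditions attached to different mutation directions do not interfere and can be transported one at a time through the single mutation $\mu_k$. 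Carrying out this denominator bookkeeping carefully — and pinning down exactly where the coprimality hypothesis is invoked — is the substantive step; the reduction and the chaining argument above are then routine.
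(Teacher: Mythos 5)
The paper does not give its own proof of this statement; it invokes it by citation to \cite{BFZ05} (their Corollary~1.7). So what you have written should be judged against the proof in that reference, not against anything in this paper.

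Structurally, your outline is a faithful reconstruction of the argument in \cite{BFZ05}. You correctly note that $\U\subseteq\U_\x$ is free, reduce to a local invariance statement (``if $\x$ and $\mu_k(\x)$ are both coprime then $\U_\x=\U_{\mu_k(\x)}$'' --- this is essentially their Theorem~1.5), and chain through a mutation path using total coprimality to legitimize each step. The reduction of the local invariance to the assertion that, inside $R=\ZZ[\x^{\pm}]\cap\ZZ[\mu_k(\x)^{\pm}]$, the remaining $j\neq k$ Laurent conditions for $\x$ and for $\mu_k(\x)$ coincide is also correct and is exactly where the work lives.

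The gap is that you stop precisely at that work. What you call ``carrying out this denominator bookkeeping carefully'' is the actual content of the theorem, and in \cite{BFZ05} it occupies the bulk of a section: one needs (i) a precise reformulation of membership in $\ZZ[\mu_j(\x)^{\pm}]$ --- your ``a suitable power of the exchange binomial $P_j$ divides the cleared-denominator form'' is a simplification; the true condition is coefficient-wise, with the required power of $P_j$ depending on the degree in $x_j$ of each monomial, not a single divisibility; (ii) the lemma that pairwise coprimality of the $P_j$ (which does follow from linear independence of the columns, as you say) lets one decompose the intersection $\U_\x$ into one-direction-at-a-time pieces of the form $\ZZ[x_j,x_j',x_i^{\pm}:i\neq j]$; and (iii) a rank-two computation showing how $P_j$ transforms under $\mu_k$, which is what makes the $j$-th condition transport across the mutation. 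None of (i)--(iii) is verified or even stated precisely in your write-up. Since you have correctly identified the strategy and even the role of coprimality, this is a gap of execution rather than of idea, but as it stands the central step (local mutation-invariance of $\U_\x$) is unproved.
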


Mutating a seed can make coprime seeds non-coprime (and vice versa), so verifying a cluster algebra is totally coprime may be hard in general.  A stronger condition is that the exchange matrix $\sB$ has \emph{full rank} (ie, kernel $0$); this is preserved by mutation, so it implies the cluster algebra $\A(\sB)$ is totally coprime.

\begin{thm}[Proposition 1.8, \cite{BFZ05}]\label{thm: fullrank}
If the exchange matrix $\sB$ of a seed of $\A$ is full rank, then $\A$ is totally coprime.
\end{thm}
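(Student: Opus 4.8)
The plan is to establish two facts: (i) a full-rank exchange matrix is coprime, and (ii) the full-rank property of the exchange matrix is invariant under mutation and under flavor-preserving permutation of the seed. Granting these, every seed mutation-equivalent to $(\x,\sB)$ has a full-rank exchange matrix, hence is coprime, so $\A$ is totally coprime by definition.

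For (i), suppose $\sB$ has trivial kernel. Since $\sB\in\Mat_{\nr,\nm}(\ZZ)$ with $\nr\geq\nm$, trivial kernel is equivalent to the $\nm$ columns of $\sB$ being linearly independent over $\QQ$; in particular no two columns are proportional, so the seed is coprime. Permutation-invariance of the full-rank property is immediate: a flavor-preserving permutation $\sigma$ sends $\sB_{ij}$ to $\sB_{\sigma(i)\sigma(j)}$, which is realized by left and right multiplication by permutation matrices of sizes $\nr\times\nr$ and $\nm\times\nm$; both factors are invertible, so the rank is unchanged.

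The key input is that mutation at an index $k$ factors as $\sB'=E_k\,\sB\,F_k$, where $E_k\in\Mat_{\nr,\nr}(\ZZ)$ and $F_k\in\Mat_{\nm,\nm}(\ZZ)$ are invertible integer matrices, each of determinant $-1$. Writing $[t]_+:=\max(t,0)$, one takes $F_k$ to agree with the identity except in its $k$-th row, where $(F_k)_{kk}=-1$ and $(F_k)_{kj}=[\sB_{kj}]_+$ for $j\neq k$, and $E_k$ to agree with the identity except in its $k$-th column, where $(E_k)_{kk}=-1$ and $(E_k)_{ik}=[-\sB_{ik}]_+$ for $i\neq k$. Verifying $\sB'=E_k\sB F_k$ is a direct computation: one checks the cases $i=k$, $j=k$, and $i,j\neq k$ separately against the mutation rule for $\sB'$, using that $\tfrac{|\sB_{ik}|\sB_{kj}+\sB_{ik}|\sB_{kj}|}{2}$ equals $[\sB_{ik}]_+[\sB_{kj}]_+-[-\sB_{ik}]_+[-\sB_{kj}]_+$. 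After reordering the indices so that $k$ is last, both $E_k$ and $F_k$ become triangular with diagonal entries $\pm1$, so $\det E_k=\det F_k=-1$ and both are invertible over $\QQ$.

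It follows that $\mathrm{rank}(\sB')=\mathrm{rank}(E_k\sB F_k)=\mathrm{rank}(\sB)$, so $\sB'$ is full rank whenever $\sB$ is. Combining this with permutation-invariance and inducting on the length of a mutation–permutation sequence, every seed mutation-equivalent to $(\x,\sB)$ has a full-rank exchange matrix, hence is coprime; therefore $\A$ is totally coprime. I expect the main obstacle to be the verification of the factorization $\sB'=E_k\sB F_k$: assembling the correct entries of $E_k$ and $F_k$ and reconciling all the sign conventions in the mutation rule is the one genuinely computational step, though it is routine once the case split is set up.
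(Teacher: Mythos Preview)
Your proof is correct. Note, however, that the paper does not supply its own proof of this statement: it is quoted verbatim as Proposition~1.8 of \cite{BFZ05} and left without argument, so there is nothing in the present paper to compare against. Your argument is essentially the one given in \cite{BFZ05}: the factorization $\sB'=E_k\sB F_k$ with $E_k,F_k$ invertible is precisely \cite[Lemma~3.2]{BFZ05}, from which rank-invariance under mutation is immediate, and the remaining observations (full rank $\Rightarrow$ pairwise independent columns, permutation-invariance) are straightforward.
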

Of course, there are many totally coprime cluster algebras which are not full rank.\footnote{Proposition \ref{prop: 3totallycoprime} provides a class of such examples.}


\section{Regular functions on an open subscheme}

This section collects some generalities about the ring we denote $\Den(R,I)$ \--- the ring of regular functions on the open subscheme of $Spec(R)$ whose complement is $V(I)$ \--- and relates this idea to cluster algebras.

\subsection{Definition}

Let $R$ be a domain with fraction field $\K(R)$.\footnote{All rings in this note are commutative and unital, but need not be Noetherian.}  For any ideal $I\subset R$, define the ring $\Den(R,I)$ as the intersection (taken in $\K(R)$)
\[ \Den(R,I) := \bigcap_{r\in I} R[r^{-1}]\]
\begin{rem}
In geometric terms, $\Den(R,I)$ is the ring of rational functions on $Spec(R)$ which are regular on the complement of $V(I)$. As a consequence, $\Den(R,I)$ only depends on $I$ up to radical.  Neither of these facts are necessary for the rest of this note, however.
\end{rem}

\begin{prop}\label{prop: dengens}
If $I$ is generated by a set $\pi\subset R$, then
\[ \Den(R,I) = \bigcap_{r\in \pi} R[r^{-1}]\]
\end{prop}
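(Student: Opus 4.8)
## Proof Proposal for Proposition \ref{prop: dengens}

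The plan is to prove the two inclusions separately. The inclusion $\Den(R,I) \subseteq \bigcap_{r\in\pi} R[r^{-1}]$ is immediate: since $\pi \subseteq I$, the intersection defining $\Den(R,I)$ runs over a superset of $\pi$, so it is contained in the smaller intersection $\bigcap_{r\in\pi} R[r^{-1}]$. All the work is in the reverse inclusion, so I would focus there.

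For the reverse inclusion, suppose $s \in \bigcap_{r\in\pi} R[r^{-1}]$; I must show $s \in R[t^{-1}]$ for every $t \in I$. First I would reduce to the case where $t$ is a product of elements of $\pi$: write a general $t \in I$ as a finite $R$-linear combination $t = \sum_{i=1}^{k} a_i r_i$ with $a_i \in R$ and $r_i \in \pi$. The key observation is that $R[t^{-1}] \supseteq R[(r_1\cdots r_k)^{-1}]$ is false in general, so instead I would argue that $s$ lies in $R[r_i^{-1}]$ for each $i$, hence (by a common-denominator argument) in $R[(r_1\cdots r_k)^{-1}]$; then since $t$ divides $r_1\cdots r_k \cdot (\text{something})$... — wait, that divisibility need not hold either. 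The cleaner route: observe that $t \in (r_1,\dots,r_k)$ means $V(r_1,\dots,r_k) \subseteq V(t)$, so the basic open $D(t)$ is covered by $D(r_1),\dots,D(r_k)$, and an element regular on each $D(r_i)$ is regular on their union, hence on $D(t)$.

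To make this algebraic and avoid invoking scheme-theoretic language (which the Remark flags as inessential), I would argue directly: write $s = b_i / r_i^{N_i}$ with $b_i \in R$ for each $i$ (taking a uniform exponent $N = \max N_i$ after clearing, so $s r_i^N \in R$ for all $i$). Then $s \cdot (r_1^N, \dots, r_k^N) \subseteq R$ as a subset of $\K(R)$. Since $t \in (r_1, \dots, r_k)$, some power $t^M$ lies in the ideal $(r_1^N, \dots, r_k^N)$ — this is the standard fact that the radical of $(r_1^N,\dots,r_k^N)$ equals the radical of $(r_1,\dots,r_k)$, which contains $t$. Therefore $s t^M = s \cdot (\sum c_i r_i^N) = \sum c_i (s r_i^N) \in R$, which shows $s \in R[t^{-1}]$. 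Since $t \in I$ was arbitrary, $s \in \Den(R,I)$.

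The main obstacle is the middle step: passing from "$s$ has denominators among the $r_i$" to "$s$ has a denominator that is a power of $t$." The naive hope that $t \mid r_i^N$ is wrong, so one genuinely needs the Nullstellensatz-style fact that $t^M \in (r_1^N,\dots,r_k^N)$ for some $M$, together with a uniform bound $N$ on the exponents appearing in the finitely many expressions $s = b_i/r_i^{N_i}$ (here finiteness of the linear combination $t = \sum a_i r_i$ is what keeps everything effective). No Noetherian hypothesis is needed, since each individual $t$ involves only finitely many generators. Everything else — the common-denominator manipulation and the easy inclusion — is routine.
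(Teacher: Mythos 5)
Your proof is correct and is essentially the paper's argument, repackaged: the paper directly expands $f^\beta = (\sum b_r r)^\beta$ with $\beta = 1 + \sum_{r\in\pi_0}\alpha_r$ and uses a pigeonhole observation on the multinomial expansion to conclude $f^\beta g \in R$; you instead take a uniform exponent $N$ and invoke the ``standard fact'' that $t\in(r_1,\dots,r_k)$ implies $t^M\in(r_1^N,\dots,r_k^N)$ for some $M$ --- but the proof of that fact is precisely the same multinomial-pigeonhole computation, so the two arguments coincide at the computational core. One small remark on labeling: calling this a ``Nullstellensatz-style fact'' somewhat oversells it, since it is a purely formal statement valid over any commutative ring with no field or finiteness hypotheses (as you correctly note, no Noetherian assumption is needed); it would be cleaner to simply prove the containment $t^M\in(r_1^N,\dots,r_k^N)$ by expanding $(\sum a_i r_i)^{k(N-1)+1}$ and invoking pigeonhole, which is exactly what the paper does.
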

\begin{proof}
Choose some $f\in I$, and write $f=\sum_{r\in \pi_0} b_r r$, where $\pi_0$ is a finite subset of $\pi$.  Let $g\in \bigcap_{r\in \pi} R[r^{-1}]$; therefore, there are $n_r\in R$ and $\alpha_r\in \mathbb{N}$ such that $g=\frac{n_r}{r^{\alpha_r}}$ for all $r\in \pi$.  Define
\[ \beta = 1+\sum_{r\in \pi_0}\alpha_r\]
and consider $f^\beta g$.  Expanding $f^\beta = \left(\sum b_rr\right)^\beta$, every monomial expression in the $\{r\}$ contains at least one $r'\in \pi_0$ with exponent greater or equal to $\alpha_{r'}$.  Since $r'^{\alpha_{r'}} g=n_{r'}\in R$, it follows that $f^\beta g\in R$ and $g\in R[f^{-1}]$.
Therefore, $\bigcap_{r\in \pi}R[r^{-1}]\subseteq \Den(R,I)$.
\end{proof}

%

\begin{prop}\label{prop: denintermediate}
If $R\subseteq S\subseteq \Den(R,I)$, then $\Den(R,I)=\Den(S,SI)$.
\end{prop}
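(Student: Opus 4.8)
The plan is to prove the two inclusions $\Den(R,I) \subseteq \Den(S, SI)$ and $\Den(S, SI) \subseteq \Den(R, I)$ separately, working inside the common fraction field $\K(R) = \K(S)$ (equal since $R \subseteq S \subseteq \Den(R,I) \subseteq \K(R)$). For the inclusion $\Den(S, SI) \subseteq \Den(R, I)$, I would first use Proposition \ref{prop: dengens}: if $I$ is generated by $\pi \subseteq R$, then $SI$ is generated (as an ideal of $S$) by the same set $\pi$, so $\Den(S,SI) = \bigcap_{r \in \pi} S[r^{-1}]$ and $\Den(R,I) = \bigcap_{r \in \pi} R[r^{-1}]$. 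Thus it suffices to show $\bigcap_{r \in \pi} S[r^{-1}] \subseteq \bigcap_{r \in \pi} R[r^{-1}]$; equivalently, for each $r \in \pi$, that $S[r^{-1}] \cap \bigcap_{r' \in \pi} S[r'^{-1}] \subseteq R[r^{-1}]$. Fix $g \in \bigcap_{r \in \pi} S[r^{-1}]$. Pick any single $r \in \pi$; then $g = s/r^{\alpha}$ for some $s \in S$ and $\alpha \in \NN$. Since $S \subseteq \Den(R,I) \subseteq R[r^{-1}]$, we can write $s = n/r^{\beta}$ with $n \in R$, hence $g = n/r^{\alpha + \beta} \in R[r^{-1}]$. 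As this holds for every $r \in \pi$, we get $g \in \Den(R, I)$.

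For the reverse inclusion $\Den(R, I) \subseteq \Den(S, SI)$, take $g \in \Den(R,I) = \bigcap_{r \in \pi} R[r^{-1}]$. Then for each $r \in \pi$ we have $g \in R[r^{-1}] \subseteq S[r^{-1}]$, so $g \in \bigcap_{r \in \pi} S[r^{-1}] = \Den(S, SI)$. This direction is essentially immediate once Proposition \ref{prop: dengens} is invoked to replace the intersection over all of $SI$ by the intersection over the generating set $\pi$.

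The only subtlety — and the step I would be most careful about — is the claim that $SI$ is generated by $\pi$ as an ideal of $S$. This is true because $SI$ is by definition the extension of the ideal $I$ to $S$, i.e.\ the $S$-submodule of $S$ generated by $I$; since $I$ is the $R$-span of $\pi$, the $S$-span of $I$ equals the $S$-span of $\pi$. One should also note the harmless edge case $I = 0$ (equivalently $\pi = \emptyset$ or $\pi = \{0\}$), where both sides are just $\K(R)$ and the statement is trivial; I would either exclude it or observe it directly. Beyond that, no real obstacle remains: the argument is a routine manipulation of localizations, and the substantive content is entirely packaged in the earlier Proposition \ref{prop: dengens}.
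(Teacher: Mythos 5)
Your proposal is correct and follows essentially the same route as the paper: both sides are reduced via Proposition~\ref{prop: dengens} to intersections over a generating set $\pi$ of $I$ (noting $\pi$ also generates $SI$ over $S$), and the key step in each direction is that $R\subseteq S\subseteq R[r^{-1}]$ forces $S[r^{-1}]=R[r^{-1}]$ for $r\in\pi$. The paper simply states the localization equality $S[r^{-1}]=R[r^{-1}]$ directly, whereas you unwind it elementwise; the content is identical.
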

\begin{proof}
For $i\in I$, $\Den(R,I)\subset R[r^{-1}]$, and so $S\subset R[r^{-1}]$.  Then $S[r^{-1}]=R[r^{-1}]$ for all $r\in I$.  If $\pi$ generates $I$ over $R$, then $\pi$ generates $SI$ over $S$.  By Proposition \ref{prop: dengens},
\[ \Den(R,I) = \bigcap_{r\in \pi} R[r^{-1}] = \bigcap_{r\in\pi}S[r^{-1}] = \Den(S,SI)\]
This completes the proof.
\end{proof}

\subsection{Upper cluster algebras}\label{section: uppergeom}

The relation between a cluster algebra $\A$ and its upper cluster algebra $\U$ is an example of this construction.
Define the \textbf{deep ideal} $\D$ of $\A$ by
\[ \D := \sum_{\text{clusters }\{x_1,x_2,...,x_{\nr}\}} \A x_1x_2...x_{\nm}\]
That is, it is the $\A$-ideal generated by the product of the mutables variables in each cluster.
\begin{prop}\label{prop: Uasden}
$\Den(\A,\D)=\U$.
\end{prop}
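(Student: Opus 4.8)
The plan is to express $\U$ as an intersection of Laurent rings $\A[f^{-1}]$ ranging over products $f$ of mutable variables in single clusters, and then apply Proposition \ref{prop: dengens} to identify this intersection with $\Den(\A,\D)$. First I would recall that by definition $\U=\bigcap_{\x}\ZZ[x_1^{\pm1},\dots,x_\nr^{\pm1}]$, where the intersection is over all clusters $\x$ of $\A$, and that by the Laurent phenomenon each such Laurent ring contains $\A$. The key observation is that for a fixed cluster $\x=\{x_1,\dots,x_\nr\}$ with mutable product $f=x_1x_2\cdots x_\nm$, the localization $\A[f^{-1}]$ already contains the inverses of all the mutable $x_i$ and (since $\A$ contains the frozen inverses) the inverses of the frozen $x_i$ as well, so $\ZZ[x_1^{\pm1},\dots,x_\nr^{\pm1}]\subseteq \A[f^{-1}]$. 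For the reverse containment, an element of $\A[f^{-1}]$ is $a/f^k$ with $a\in\A$, and by the Laurent phenomenon $a$ lies in $\ZZ[x_1^{\pm1},\dots,x_\nr^{\pm1}]$, hence so does $a/f^k$; therefore $\A[f^{-1}]=\ZZ[x_1^{\pm1},\dots,x_\nr^{\pm1}]$ exactly.

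Granting that identification, $\U=\bigcap_{\x}\A[f_\x^{-1}]$ where $f_\x$ is the product of mutable variables of the cluster $\x$. Now the deep ideal is $\D=\sum_\x \A f_\x$, so the set $\pi=\{f_\x\}$ of mutable products over all clusters is a generating set for $\D$. Applying Proposition \ref{prop: dengens} with $R=\A$ and $I=\D$ generated by $\pi$, we get
\[ \Den(\A,\D)=\bigcap_{r\in\pi}\A[r^{-1}]=\bigcap_{\x}\A[f_\x^{-1}]=\U, \]
which is the desired equality.

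The main obstacle is the step showing $\A[f^{-1}]=\ZZ[x_1^{\pm1},\dots,x_\nr^{\pm1}]$ for a single cluster: the inclusion $\subseteq$ is the Laurent phenomenon applied to $\A$ together with the fact that $f^{-1}$ is already a monomial in the $x_i^{-1}$, while the inclusion $\supseteq$ requires knowing that each individual cluster variable $x_i$ of this cluster, and each frozen inverse, lies in $\A[f^{-1}]$ — the mutable $x_i$ divide $f$ so their inverses are in $\A[f^{-1}]$, and the frozen inverses are in $\A$ by definition. One should be slightly careful that $\A$ is generated over $\ZZ$ (the Laurent rings are over $\ZZ$), but this is consistent with the definitions in the excerpt. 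A minor bookkeeping point is that $\D$ could a priori also be described using non-reduced or repeated generators; since $\Den$ depends only on $I$ up to radical (and in any case Proposition \ref{prop: dengens} only needs \emph{a} generating set), this causes no trouble.
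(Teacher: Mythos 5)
Your proof is correct and follows essentially the same route as the paper: both apply Proposition \ref{prop: dengens} with the generating set $\{f_\x\}$ of mutable-variable products to rewrite $\Den(\A,\D)$ as $\bigcap_\x \A[f_\x^{-1}]$, and both identify each $\A[f_\x^{-1}]$ with the corresponding Laurent ring. You spell out the two inclusions for $\A[f^{-1}]=\ZZ[x_1^{\pm1},\dots,x_\nr^{\pm1}]$ in detail, whereas the paper treats this identification as already established (it is stated as part of the Laurent phenomenon theorem quoted earlier: ``this is the localization of $\A$ at the mutable variables'').
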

\begin{proof}Since $\D$ is generated by the products of the mutable variables in the clusters,
\begin{eqnarray*}
\Den(\A,\D) &=& \bigcap_{\text{clusters }\{x_1,x_2,...,x_n\}} \A [(x_1x_2...x_{\nm})^{-1}]\\
 &=& \bigcap_{\text{clusters }\{x_1,x_2,...,x_n\}} \mathbb{Z}[x_1^{\pm1},x_2^{\pm1},...,x_{\nr}^{\pm1}]
\end{eqnarray*}
Thus, $\Den(\A,\D)=\U$.
\end{proof}

\begin{rem} \label{rem: uppergeom}
The proposition is equivalent to the following well-known geometric interpretation of $\U$. 
If $\{x_1,...,x_{\nr}\}$ is a cluster, then the isomorphism
\[\A [(x_1x_2...x_{\nm})^{-1}]\simeq \mathbb{Z}[x_1^{\pm1},x_2^{\pm1},...,x_{\nr}^{\pm1}]\]
determines an open inclusion $\mathbb{G}^{\nr}_{\ZZ}\hookrightarrow Spec(\A)$.\footnote{These open algebraic tori are called \emph{toric charts} in \cite{Sco06} and \emph{cluster tori} in \cite{MulLA}.}  The union of all such open affine subschemes is a smooth open subscheme in $Spec(\A)$, whose complement is $V(\D)$.\footnote{This union is called the \emph{cluster manifold} in \cite{GSV03}.}  The proposition states that $\U$ is the ring of regular functions on this union.
\end{rem}

\subsection{Upper bounds}

Let $(\x,\sB)$ be a seed with $\x=\{x_1,x_2,...,x_n\}$.  As in Section \ref{section: bounds}, let $x_i'$ denote the mutation of $x_i$ in $\x$.  The \textbf{lower deep ideal} $\D_\mathbf{x}$ is the $L_{\x}$-ideal
\[ \D_\x := L_\x (x_1x_2...x_{\nm}) + \sum_i L_\x (x_1x_2...x_{i-1}x_i'x_{i+1}...x_{\nm})\]

Proposition \ref{prop: Uasden} has an analog.
\begin{prop}\label{prop: Uxasden}
$\Den(L_\x,\D_\x)=\U_\x$
\end{prop}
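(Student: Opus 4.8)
The plan is to mimic the proof of Proposition \ref{prop: Uasden}, exploiting the fact that the defining ideal $\D_\x$ is, by construction, generated over $L_\x$ by the $\nm+1$ products of mutable variables associated to $\x$ and its one-step mutations. First I would invoke Proposition \ref{prop: dengens}: since $\D_\x$ is generated by the finite set
\[ \pi := \{x_1x_2\cdots x_{\nm}\} \cup \{x_1\cdots x_{i-1}x_i'x_{i+1}\cdots x_{\nm} \mid 1\leq i\leq \nm\}, \]
we have $\Den(L_\x,\D_\x) = \bigcap_{r\in\pi} L_\x[r^{-1}]$. So it suffices to identify each localization $L_\x[r^{-1}]$ with the corresponding Laurent ring in the definition of $\U_\x$, and then to check that the intersection of those Laurent rings is exactly $\U_\x$.

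The key step is therefore to show $L_\x[(x_1\cdots x_{i-1}x_i'x_{i+1}\cdots x_{\nm})^{-1}] = \ZZ[x_1^{\pm1},\dots,x_{i-1}^{\pm1},x_i'^{\pm1},x_{i+1}^{\pm1},\dots,x_{\nr}^{\pm1}]$, and similarly $L_\x[(x_1\cdots x_{\nm})^{-1}] = \ZZ[x_1^{\pm1},\dots,x_{\nr}^{\pm1}]$. For the containment ``$\subseteq$'': $L_\x$ is generated by the $x_j$, the $x_j'$, and the inverses $x_{\nm+1}^{-1},\dots,x_{\nr}^{-1}$; after inverting $x_1\cdots x_{i-1}x_i'x_{i+1}\cdots x_{\nm}$ all the $x_j$ with $j\neq i$ and $x_i'$ become units, and $x_{\nm+1}^{-1},\dots,x_{\nr}^{-1}$ are already present, so the localization sits inside the claimed Laurent ring. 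For ``$\supseteq$'': one must check each generator of the Laurent ring lies in $L_\x[(x_1\cdots x_{i-1}x_i'x_{i+1}\cdots x_{\nm})^{-1}]$ — the $x_j^{\pm1}$ for $j\neq i$ and $x_i'^{\pm1}$ are immediate, the frozen inverses $x_{\nm+1}^{-1},\dots,x_{\nr}^{-1}$ are in $L_\x$, and the one remaining generator $x_i^{-1}$ is recovered from the exchange relation $x_ix_i' = \prod_{\sB_{jk}>0}x_j^{\sB_{jk}} + \prod_{\sB_{jk}<0}x_j^{-\sB_{jk}}$ (with $k=i$): dividing by $x_ix_i'$ expresses $x_i^{-1}$ as a Laurent monomial combination of the other $x_j$ times $x_i'^{-1}$, all of which are available after the localization. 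The $r = x_1\cdots x_{\nm}$ case is the same but easier, needing only the $x_j$ and frozen inverses. Having matched the rings term by term,
\[ \Den(L_\x,\D_\x) = \bigcap_{r\in\pi} L_\x[r^{-1}] = \ZZ[x_1^{\pm1},\dots,x_{\nr}^{\pm1}] \cap \bigcap_i \ZZ[x_1^{\pm1},\dots,x_i'^{\pm1},\dots,x_{\nr}^{\pm1}] = \U_\x, \]
which is precisely the definition of the upper bound.

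I expect the main obstacle to be the ``$\supseteq$'' direction, specifically the bookkeeping needed to see that $x_i^{-1}$ lands in the localized lower bound: one needs the exchange relation to hold inside $L_\x$ (which it does, since both $x_i$ and $x_i'$ are generators of $L_\x$ and the relation is an identity in the ambient field $\K$), and then to track that inverting the single element $x_1\cdots x_{i-1}x_i'x_{i+1}\cdots x_{\nm}$ really does invert each of $x_1,\dots,\widehat{x_i},\dots,x_{\nm}$ and $x_i'$ individually — this is automatic because in a domain, inverting a product inverts each factor. Everything else is the same formal manipulation used in Propositions \ref{prop: dengens} and \ref{prop: Uasden}, so the write-up should be short; one could even phrase it as: the proof is verbatim that of Proposition \ref{prop: Uasden}, replacing ``clusters of $\A$'' by ``$\x$ together with its one-step mutations'' and using that $L_\x$ localized at the appropriate product of mutables agrees with the corresponding cluster Laurent ring.
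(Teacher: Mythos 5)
Your overall route matches the paper's: invoke Proposition \ref{prop: dengens} to rewrite $\Den(L_\x,\D_\x)$ as the finite intersection $\bigcap_{r\in\pi} L_\x[r^{-1}]$, and then identify each localization with the corresponding Laurent ring. The paper's proof stops at exactly that outline, so you are right that the substance lies in the identification $L_\x[(x_1\cdots x_{i-1}x_i'x_{i+1}\cdots x_{\nm})^{-1}] = \ZZ[x_1^{\pm1},\dots,x_i'^{\pm1},\dots,x_\nr^{\pm1}]$ --- but you have the two directions backwards. The Laurent ring on the right is generated by $x_j^{\pm1}$ for $j\neq i$ together with $x_i'^{\pm1}$; in particular $x_i^{-1}$ is \emph{not} among its generators, so the ``$\supseteq$'' inclusion is trivial and the exchange-relation argument you file there is aimed at a nonexistent generator. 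The nontrivial inclusion is ``$\subseteq$'', because $L_\x$ contains $x_i$ and all the one-step mutations $x_j'$, none of which obviously lies in the target Laurent ring, and your sketch of ``$\subseteq$'' never addresses them.

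The fix is the manipulation you already wrote, just relocated and re-aimed: from $x_ix_i' = \prod_{\sB_{ji}>0}x_j^{\sB_{ji}} + \prod_{\sB_{ji}<0}x_j^{-\sB_{ji}}$ and $\sB_{ii}=0$, the right side is a polynomial in $\{x_j : j\neq i\}$, so $x_i = (\text{binomial})\cdot x_i'^{-1}$ lies in $\ZZ[x_1^{\pm1},\dots,x_i'^{\pm1},\dots,x_\nr^{\pm1}]$. For the remaining generators $x_j'$ ($j\neq i$) of $L_\x$, either substitute this expression for $x_i$ into $x_j' = \bigl(\prod_{\sB_{kj}>0}x_k^{\sB_{kj}} + \prod_{\sB_{kj}<0}x_k^{-\sB_{kj}}\bigr)x_j^{-1}$, or more cleanly note that the Laurent phenomenon already gives $L_\x\subset\A\subset\ZZ[x_1^{\pm1},\dots,x_i'^{\pm1},\dots,x_\nr^{\pm1}]$ and that localizing at an element which is a unit in the target preserves the inclusion --- this sidesteps the generator-by-generator check entirely. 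With the directions corrected, your proof is the paper's with the omitted computation restored.
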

\begin{proof}
Since $\D_\x$ is generated by the products of the mutable variables in $L_\x$,
\begin{eqnarray*}
\Den(L_\x,\D_\x) &=& L_\x[(x_1x_2...x_{\nm})^{-1}]\cap \bigcap_i L_\x[(x_1x_2...x_i'...x_{\nm})^{-1}] \\
 &=& \mathbb{Z}[x_1^{\pm1},...,x_{\nr}^{\pm1}]\cap \bigcap_i \mathbb{Z}[x_1^{\pm1},...,x_i'^{\pm1},...,x_{\nr}^{\pm1}]
\end{eqnarray*}
Thus, $\Den(L_\x,\D_\x)=\U_\x$.
\end{proof}

In practice, $\Den(L_\x,\D_\x)$ is much easier to work with than $\Den(\A,\D)$, because the objects involved are defined by finite generating sets.

\begin{rem}
For any set of clusters $S$ in $\A$, one may define $L_S$ generated by the variables in $S$, $\U_S$ as the intersection of the Laurent rings of clusters in $S$, and $\D_S$ an ideal in $L_S$ generated by the products of clusters in $S$.  Again, one has  $\U_S = \Den(L_S,\D_S)$.
\end{rem}

\section{Criteria for $\Den(R,I)$}

Given a `guess' for $\Den(R,I)$ \--- a ring $S$ such that $R\subseteq S\subseteq \Den(R,I)$ \--- there are several criteria for verifying if $S=\Den(R,I)$. This section develops these criteria.


\subsection{Saturations}

Given two ideals $I,J$ in $R$, define the \textbf{saturation}
\[ (J:I^\infty) = \{r\in R \mid \forall g\in I,\; \exists n\in \mathbb{N} \text{ s.t. } rg^n\in J\}\]
Computer algebra programs can compute saturations, at least when $R$ is finitely generated.

\begin{rem}
When $I$ is not finitely generated, this definition of saturation may differ from the infinite union $\bigcup_n(J:I^n)$, which amounts to reversing the order of quantifiers.
\end{rem}

Saturations can be used to compute the sub-$R$-module of $\Den(R,I)$ with denominator $f$.

\begin{prop}\label{prop: densat}
If $f\in R$, then
\[ Rf^{-1}\cap \Den(R,I)=(Rf:I^\infty)f^{-1}\]
\end{prop}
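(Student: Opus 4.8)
The plan is to prove the two inclusions separately, translating the membership conditions in each side into the existence of bounds $n$ in the saturation definition. The statement $Rf^{-1}\cap \Den(R,I)=(Rf:I^\infty)f^{-1}$ is an equality of $R$-submodules of $\K(R)$, so an element of either side has the shape $rf^{-1}$ for some $r\in R$, and the claim reduces to: $rf^{-1}\in\Den(R,I)$ if and only if $r\in (Rf:I^\infty)$.

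For the inclusion $\supseteq$, suppose $r\in (Rf:I^\infty)$. By definition, for every $g\in I$ there is $n\in\NN$ with $rg^n\in Rf$, i.e. $rg^n = fb$ for some $b\in R$. Then $rf^{-1} = bg^{-n}\in R[g^{-1}]$. Since this holds for every $g\in I$, we get $rf^{-1}\in\bigcap_{g\in I}R[g^{-1}]=\Den(R,I)$; and $rf^{-1}\in Rf^{-1}$ is automatic. So $(Rf:I^\infty)f^{-1}\subseteq Rf^{-1}\cap\Den(R,I)$.

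For the inclusion $\subseteq$, suppose $rf^{-1}\in\Den(R,I)$. Then for each $g\in I$ we have $rf^{-1}\in R[g^{-1}]$, so there exist $m_g\in R$ and $\alpha_g\in\NN$ with $rf^{-1}=m_g g^{-\alpha_g}$, hence $rg^{\alpha_g}=f m_g\in Rf$. Since this holds for every $g\in I$, the element $r$ satisfies exactly the defining condition of $(Rf:I^\infty)$, so $r\in (Rf:I^\infty)$ and $rf^{-1}\in (Rf:I^\infty)f^{-1}$. This gives $Rf^{-1}\cap\Den(R,I)\subseteq (Rf:I^\infty)f^{-1}$, completing the proof.

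I do not expect a genuine obstacle here: both directions are essentially just unwinding the two definitions and matching quantifiers. The one point that deserves a word of care is that the saturation $(Rf:I^\infty)$ is defined with the quantifiers in the order ``for all $g\in I$, there exists $n$'' (as flagged in the remark preceding the proposition), and this is exactly the order in which the condition ``$rf^{-1}\in R[g^{-1}]$ for all $g\in I$'' produces the exponents — one $n=\alpha_g$ per $g$, with no uniform bound required. So no finiteness or Noetherian hypothesis on $R$ or $I$ is needed, and the proof goes through verbatim in the generality stated.
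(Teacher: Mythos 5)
Your proof is correct and follows essentially the same route as the paper's: both directions simply unwind the definitions of $\Den(R,I)$ and the saturation, matching the existential quantifier on the exponent element-by-element in $I$. The only difference is notational (you name the $R$-coefficient $r$ and the ideal element $g$, where the paper swaps these), and your explicit note about the quantifier order in the saturation definition is a fair reading of the remark preceding the proposition.
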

\begin{proof}
If $g\in R\cap f\Den(R,I)$, then for any $r\in I$, we may write $gf^{-1}=hr^{-m}$ for some $h\in R$ and $m\in \mathbb{N}$.  Then $gr^{m}=hf\in Rf$; and so $g\in (Rf:I^\infty)$.

If $g\in (Rf:I^\infty)$, then for any $r\in I$, there is some $m$ such that $gr^{m}\in Rf$.  It follows that $gf^{-1}\in Rr^{-m}\subset R[r^{-1}]$.  Therefore, $gf^{-1}\in \Den(R,I)$, and so $g\in f\Den(R,I)$.
\end{proof}


Saturations can also detect when $R=\Den(R,I)$.

\begin{prop}\label{prop: dencheck}
Let $f\in I$. Then $R=\Den(R,I)$ if and only if
$Rf = (Rf:I^\infty)$.
\end{prop}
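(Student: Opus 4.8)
The plan is to reduce the statement to Proposition \ref{prop: densat} applied with the denominator $f$. Since $f \in I$, every ring $R[r^{-1}]$ appearing in the intersection defining $\Den(R,I)$ satisfies $R[r^{-1}] \subseteq R[(rf)^{-1}]$, and in particular $\Den(R,I) \subseteq R[f^{-1}]$. Hence $\Den(R,I) = R[f^{-1}] \cap \Den(R,I)$, and by Proposition \ref{prop: densat} this equals $(Rf : I^\infty)f^{-1}$. So the content of the proposition is really the equivalence between $R = (Rf:I^\infty)f^{-1}$ and $R = \Den(R,I)$, together with the fact that $f$ being in $I$ forces the denominator-$f$ piece to capture all of $\Den(R,I)$.

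First I would prove the containment $\Den(R,I) \subseteq R[f^{-1}]$ carefully: given $g \in \Den(R,I)$, since $f \in I$ we have $g \in R[f^{-1}]$ directly from the definition of the intersection, so $g = h/f^{\alpha}$ for some $h \in R$, $\alpha \in \NN$. This shows $\Den(R,I) = R[f^{-1}] \cap \Den(R,I)$. Next, combining with Proposition \ref{prop: densat} (taking that same $f$), I get the identity
\[ \Den(R,I) = (Rf : I^\infty) f^{-1}. \]
Now the two directions: if $Rf = (Rf:I^\infty)$, then the right-hand side is $Rf \cdot f^{-1} = R$, so $\Den(R,I) = R$. Conversely, if $\Den(R,I) = R$, then $(Rf:I^\infty)f^{-1} = R$, and multiplying by $f$ gives $(Rf:I^\infty) = Rf$; the reverse inclusion $Rf \subseteq (Rf:I^\infty)$ is automatic (take $n = 0$, or note $rf \cdot g^0 = rf \in Rf$), so in fact this inclusion holds unconditionally and only the forward one needs the hypothesis.

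I do not expect a serious obstacle here, since essentially everything follows by specializing Proposition \ref{prop: densat}; the one point requiring a moment's care is the observation that $f \in I$ (not merely $f \in R$) is exactly what guarantees $\Den(R,I) \subseteq R[f^{-1}]$, so that the single localization at $f$ already "sees" all of $\Den(R,I)$. Without $f \in I$ the identity $\Den(R,I) = (Rf:I^\infty)f^{-1}$ would fail, so this hypothesis is genuinely used and should be flagged. The rest is bookkeeping with the saturation, and the trivial inclusion $Rf \subseteq (Rf:I^\infty)$ should be stated explicitly so the "if and only if" reads cleanly.
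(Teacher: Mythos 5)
There is a genuine gap in your argument, and it lives in the step where you invoke Proposition \ref{prop: densat}. That proposition asserts $Rf^{-1}\cap \Den(R,I)=(Rf:I^\infty)f^{-1}$, where $Rf^{-1}$ denotes the $R$-\emph{submodule} $\{\,r/f : r\in R\,\}$ of the fraction field --- not the localization $R[f^{-1}]=\bigcup_n Rf^{-n}$. You correctly note that $f\in I$ gives $\Den(R,I)\subseteq R[f^{-1}]$, but then substitute $R[f^{-1}]\cap\Den(R,I)$ in place of $Rf^{-1}\cap\Den(R,I)$ to conclude the unconditional identity $\Den(R,I)=(Rf:I^\infty)f^{-1}$. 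That identity is false in general: the right-hand side is merely an $R$-module of elements with denominator a single power of $f$, whereas $\Den(R,I)$ is a ring that can contain elements requiring arbitrarily high powers of $f$ in the denominator. (Concretely, take $R=k[x^2,x^3]$ and $I=(x^2,x^3)$, so $\Den(R,I)=k[x,x^{-1}]$; no single $f\in I$ makes $(Rf:I^\infty)f^{-1}$ equal to $k[x,x^{-1}]$.) The identity you want holds \emph{after} you already know $R=\Den(R,I)$, so using it to prove that equality is circular.

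The fix is the descent argument the paper actually runs, and it uses Proposition \ref{prop: densat} only in the single-denominator form. Given $g\in\Den(R,I)$, the containment $\Den(R,I)\subseteq R[f^{-1}]$ lets you pick a minimal $n$ with $f^n g\in R$. If $n\geq 1$, then $f^{n-1}g\in\Den(R,I)$ and $f\cdot(f^{n-1}g)\in R$, so $f^{n-1}g\in Rf^{-1}\cap\Den(R,I)=(Rf:I^\infty)f^{-1}$; now the hypothesis $(Rf:I^\infty)=Rf$ upgrades this to $f^{n-1}g\in R$, contradicting minimality of $n$. Hence $n=0$ and $g\in R$. Your forward direction (that $R=\Den(R,I)$ implies $(Rf:I^\infty)=Rf$, with the reverse containment $Rf\subseteq(Rf:I^\infty)$ trivial) is fine and matches the paper; it is only the converse that needs the inductive step in place of the false global identity.
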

\begin{proof}
If $R=\Den(R,I)$, then $(Rf:I^\infty)=Rf\cap \Den(R,I)=Rf$.

Assume $Rf = (Rf:I^\infty)$. Let $g\in \Den(R,I)$, and let $n$ be the smallest integer such that $f^ng\in R$.  If $n\geq1$, then
\[f(f^{n-1}g)\in R\cap (f\Den(R,I)) =(Rf:I^\infty)=Rf\]
and so $f^{n-1}g\in R$, contradicting minimality of $n$.  So $g\in R$, and so $\Den(R,I)=R$.
\end{proof}

\subsection{The saturation criterion} 


Given a ring $S$ with $R\subseteq S\subseteq \Den(R,I)$, the following lemma gives a necessary and computable criterion for when $S=\Den(R,I)$.  Perhaps more importantly, if $S\subsetneq \Den(R,I)$, it explicitly gives new elements of $\Den(R,I)$, which can be used to generate a better guess $S'\subseteq \Den(R,I)$.
%

\begin{lemma}\label{lemma: satcriterion}
Let $R\subseteq S\subseteq \Den(R,I)$.  For and $f\in I$,
\[ S\subseteq (S f:(S I)^\infty)f^{-1}\subset \Den(R,I)\]
Furthermore, either
\begin{itemize}
	\item $S=\Den(R,I)$, or
	\item $S\subsetneq (S f:(S I)^\infty)f^{-1}\subseteq \Den(R,I)$.
\end{itemize}
\end{lemma}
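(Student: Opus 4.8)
The plan is to deduce everything from the results already proved about $\Den$ and saturations, applied not to $R$ itself but to the intermediate ring $S$. The key observation is that Proposition \ref{prop: denintermediate} gives $\Den(R,I)=\Den(S,SI)$, since $R\subseteq S\subseteq\Den(R,I)$ by hypothesis. This lets me rewrite the target statement entirely in terms of $S$ and the ideal $SI$, replacing the somewhat awkward mixed expression $(Sf:(SI)^\infty)f^{-1}$ with something I have a clean formula for.

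First I would check that $f\in I$ implies $f\in SI$, so that $f$ is a legitimate element of the ideal $SI\subset S$; this is immediate since $1\in S$. Then I would apply Proposition \ref{prop: densat} with the ring $S$ in place of $R$ and the ideal $SI$ in place of $I$: this yields
\[ Sf^{-1}\cap\Den(S,SI)=(Sf:(SI)^\infty)f^{-1}.\]
Combining with $\Den(S,SI)=\Den(R,I)$, the right-hand side of the lemma's display is exactly $Sf^{-1}\cap\Den(R,I)$, which is visibly a sub-$S$-module of $\Den(R,I)$ sitting between $S$ (take numerators divisible by $f$, i.e. $S=Sf\cdot f^{-1}\subseteq Sf^{-1}\cap\Den(R,I)$ using $S\subseteq\Den(R,I)$) and $\Den(R,I)$. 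That establishes the chain of inclusions $S\subseteq(Sf:(SI)^\infty)f^{-1}\subseteq\Den(R,I)$.

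For the dichotomy, I would invoke Proposition \ref{prop: dencheck}, again with $S$ and $SI$ in the roles of $R$ and $I$: it says $S=\Den(S,SI)$ if and only if $Sf=(Sf:(SI)^\infty)$. Translating through $\Den(S,SI)=\Den(R,I)$, we get: $S=\Den(R,I)$ iff $Sf=(Sf:(SI)^\infty)$, i.e. iff $(Sf:(SI)^\infty)f^{-1}=Sff^{-1}=S$. So the two alternatives are genuinely exhaustive and mutually exclusive — either the saturation gives nothing new and $S=\Den(R,I)$, or the containment $S\subsetneq(Sf:(SI)^\infty)f^{-1}$ is strict while still landing inside $\Den(R,I)$.

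I expect no serious obstacle here; the work is really all in Propositions \ref{prop: denintermediate}, \ref{prop: densat}, and \ref{prop: dencheck}, and the lemma is essentially a bookkeeping repackaging of them. The one point requiring a little care is making sure the hypotheses of those propositions are met for the pair $(S,SI)$ — in particular that $f$ genuinely lies in the ideal being saturated against, and that the "intermediate ring" hypothesis $R\subseteq S\subseteq\Den(R,I)$ is exactly what is needed to invoke Proposition \ref{prop: denintermediate}. I would state these verifications explicitly but briefly.
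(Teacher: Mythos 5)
Your argument is correct and is essentially the paper's own proof: both reduce to $\Den(R,I)=\Den(S,SI)$ via Proposition \ref{prop: denintermediate}, then read off the containments from Proposition \ref{prop: densat} and the dichotomy from Proposition \ref{prop: dencheck}. You fill in a few extra justifications (e.g.\ $f\in SI$, and the lower containment via $S=Sf\cdot f^{-1}\subseteq Sf^{-1}\cap\Den(R,I)$), but there is no substantive difference in route.
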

\begin{proof}
By Proposition \ref{prop: denintermediate}, $\Den(R,I)=\Den(S,SI)$.  The containment $(S f:(S I)^\infty)f^{-1}\subset \Den(R,I)$ follows from Proposition \ref{prop: densat}.  The containment $S\subseteq (S f:(S I)^\infty)f^{-1}$ is clear from the definition of the saturation.
If $(S f:(S I)^\infty)f^{-1}=S$, then Proposition \ref{prop: dencheck} implies that $S=\Den(R,I)$.
\end{proof}

\subsection{Noetherian algebraic criteria}

When the ring $S$ is Noetherian, there are several alternative criteria to verify that $S=\Den(R,I)$.\footnote{However, even when $R$ is Noetherian, one cannot always expect that $\Den(R,I)$ is Noetherian.}
When $S$ is also normal, these criteria are sharp, but none of them can give a constructive negative answer similar to Lemma \ref{lemma: satcriterion}.

The definitions of `codimension', `S2' and `depth' used here are found in \cite{Eis95}.
\begin{lemma}\label{lemma: Hartog}
Let $R\subseteq S\subseteq \Den(R,I)$.
If $S$ is Noetherian, then each of the following statements implies the next.
\begin{enumerate}
	\item $S$ is normal and $\codim(SI)\geq2$.
	\item $S$ is $S2$ and $\codim (SI)\geq2$.
	\item $\text{depth}_S(SI)\geq2$; that is, $Ext^1_S(S/SI,S)=0$.
	\item $S=\Den(R,I)$.
\end{enumerate}
If $S$ is normal and Noetherian, then the above statements are equivalent.
\end{lemma}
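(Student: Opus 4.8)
The plan is to prove the chain of implications $(1)\Rightarrow(2)\Rightarrow(3)\Rightarrow(4)$ and then, under the normal Noetherian hypothesis, close the loop by showing $(4)\Rightarrow(1)$. The first implication $(1)\Rightarrow(2)$ is immediate from the Serre criterion: a normal Noetherian ring satisfies $S2$ (and $R1$), so $(1)$ trivially gives $(2)$. For $(2)\Rightarrow(3)$, the point is that a Noetherian ring which is $S2$ has $\operatorname{depth}_{\m}(S_{\m})\geq\min(2,\dim S_{\m})$ for every prime $\m$; combined with $\codim(SI)\geq 2$, this forces $\operatorname{depth}_S(SI)\geq 2$, and the equivalence with $Ext^1_S(S/SI,S)=0$ is the standard homological characterization of depth of an ideal via the vanishing of low-degree $Ext$ (see \cite{Eis95}). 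I would cite these facts rather than reprove them.

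The implication $(3)\Rightarrow(4)$ is the substantive step and is essentially a form of the algebraic Hartogs phenomenon. By Proposition \ref{prop: denintermediate}, $\Den(R,I)=\Den(S,SI)$, so it suffices to show $S=\Den(S,SI)$. Let $\pi$ be a finite generating set for $SI$ (using that $S$ is Noetherian), and fix some nonzero $f\in SI$. Take $g\in\Den(S,SI)$; by Proposition \ref{prop: densat}, $g f\in(Sf:(SI)^\infty)$, i.e. some power of each generator of $SI$ multiplies $gf$ into $Sf$. I want to conclude $gf\in Sf$, equivalently $g\in S$. Here is where $\operatorname{depth}_S(SI)\geq 2$ enters: if $g\notin S$, consider the $S$-submodule $M:=(Sf+Sgf)/Sf\subseteq S/Sf$, which is nonzero and annihilated by a power of $SI$, hence supported on $V(SI)$; but $\operatorname{depth}_S(SI)\geq 2$ means $SI$ contains an $S$-regular sequence of length $2$ on $S$, which via the $Ext$ vanishing $Ext^0_S(S/SI,S)=Ext^1_S(S/SI,S)=0$ forbids $S/Sf$ from having a nonzero submodule supported on $V(SI)$ once we also use that $f$ itself is a nonzerodivisor — more carefully, I would run the argument by the long exact sequence of $Ext^\bullet_S(-,S)$ applied to $0\to Sf\to S\to S/Sf\to 0$ together with $0\to S/Sf\xrightarrow{\cdot} \text{(something)}$, or simply invoke that $\operatorname{depth}_S(SI)\geq2$ implies $H^0_{SI}(S)=H^1_{SI}(S)=0$ and identify $\Den(S,SI)$ with $H^0$ of the punctured spectrum, which by the local cohomology exact sequence $0\to H^0_{SI}(S)\to S\to \Den(S,SI)\to H^1_{SI}(S)\to 0$ gives $S\xrightarrow{\sim}\Den(S,SI)$ exactly when both vanish.

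Finally, for the converse $(4)\Rightarrow(1)$ under the assumption that $S$ is normal and Noetherian: normality is given, so only $\codim(SI)\geq 2$ needs proof. If instead $\codim(SI)\leq 1$, then since $S$ is normal it is $R1$, so the localization $S_{\m}$ at a height-one prime $\m\supseteq SI$ is a DVR, and $SI\cdot S_{\m}$ is $\m S_{\m}$-primary; then $\Den(S_{\m},SI\cdot S_{\m})=S_{\m}$ trivially, but globally one checks that inverting any $f\in SI$ lying in such an $\m$ produces, after intersecting, strictly more than $S$ — more cleanly, a normal Noetherian domain satisfies $S=\bigcap_{\operatorname{ht}(\m)=1}S_{\m}$, and if $V(SI)$ contains a height-one prime $\m$ then $\Den(S,SI)\subseteq\bigcap_{\operatorname{ht}(\p)=1,\;\p\neq\m}S_{\p}$ which can be strictly larger than $S$; running this carefully shows $\Den(S,SI)=S$ forces no height-one prime over $SI$, i.e. $\codim(SI)\geq 2$. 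I expect the main obstacle to be pinning down the $(3)\Rightarrow(4)$ step cleanly without local cohomology machinery if the paper wants an elementary treatment; the cleanest route is to phrase everything through the exact sequence $0\to H^0_{SI}(S)\to S\to \Den(S,SI)\to H^1_{SI}(S)\to 0$ and the identity $\operatorname{depth}_S(SI)=\inf\{\,i : H^i_{SI}(S)\neq 0\,\}$, so I would set up that identification first and let the rest follow formally.
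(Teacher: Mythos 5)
Your $(1)\Rightarrow(2)\Rightarrow(3)$ implications match the paper's (Serre's criterion, then the standard $S2$-plus-codimension-implies-depth fact). The interesting divergence is in $(3)\Rightarrow(4)$ and $(4)\Rightarrow(1)$.

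For $(3)\Rightarrow(4)$ you propose the local-cohomology route: identify $\Den(S,SI)$ with $\Gamma(U,\mathcal{O}_U)$ for $U=\mathrm{Spec}(S)\setminus V(SI)$ (which is correct since $S$ is a Noetherian domain and $SI$ is finitely generated), invoke the four-term exact sequence $0\to H^0_{SI}(S)\to S\to\Gamma(U,\mathcal O_U)\to H^1_{SI}(S)\to 0$, and cite $\mathrm{depth}_S(SI)=\inf\{i:H^i_{SI}(S)\neq0\}$. That argument is sound and more conceptual than the one in the paper. The paper instead proves the contrapositive directly: from $S\subsetneq\Den(S,SI)$ it extracts an element $g$ with $gI\subseteq S$ and $g\notin S$, observes $0\to S\to Sg\to Sg/S\to 0$ is an essential extension (hence $Ext^1_S(Sg/S,S)\neq0$), and pushes this through a surjection $S/SI\twoheadrightarrow Sg/S$ with torsion kernel to get $Ext^1_S(S/SI,S)\neq0$. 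Both are legitimate; yours trades elementary Ext manipulation for the local-cohomology package. You also begin an ad hoc argument about the module $(Sf+Sgf)/Sf$ before switching to local cohomology — that beginning is essentially the paper's argument in embryo, and it would be cleaner to commit to one route.

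Your $(4)\Rightarrow(1)$ sketch has a genuine gap. First a minor sign issue: since $\Den(S,SI)=\bigcap_{\mathrm{ht}(\mathfrak p)=1,\ SI\not\subseteq\mathfrak p}S_{\mathfrak p}$ for a normal Noetherian domain, the inclusion runs $\Den(S,SI)\supseteq\bigcap_{\mathfrak p\neq\mathfrak m}S_{\mathfrak p}$, not $\subseteq$ as you wrote (fewer primes in the intersection gives a \emph{larger} ring). More seriously, the crux is the claim that $\bigcap_{\mathrm{ht}(\mathfrak p)=1,\ \mathfrak p\neq\mathfrak m}S_{\mathfrak p}\supsetneq S$, i.e.\ that the decomposition $S=\bigcap_{\mathrm{ht}(\mathfrak p)=1}S_{\mathfrak p}$ is irredundant — you say this "can be strictly larger" but do not justify it, and it is not automatic. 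One must actually produce an element of $\K(S)$ with $\nu_{\mathfrak m}<0$ and $\nu_{\mathfrak p}\geq 0$ for all other height-one $\mathfrak p$. The paper does exactly this: it passes to the DVR $S_P$, picks a generator $a_1$ of $P$ with $\nu(a_1)=1$, writes the other generators $a_i$ as $(f_i/g_i)a_1^{\nu(a_i)}$ with $f_i,g_i\in S\setminus P$, and assembles an explicit $x$ lying in each $S[a_k^{-1}]$ (hence in $\Den(S,P)\subseteq\Den(R,I)$) with $\nu(x)<0$. Some such explicit construction (or an appeal to the standard fact that a height-one prime of a Krull domain cannot be omitted from the intersection of localizations, proved e.g.\ via prime avoidance among the finitely many height-one primes appearing in $\mathrm{div}(a)$ for $a\in\mathfrak m$ with $\nu_{\mathfrak m}(a)=1$) is needed to complete the step.
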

\begin{proof}
$(1) \Rightarrow (2)$. By Serre's criterion \cite[Theorem 11.5.i]{Eis95}, a normal Noetherian domain is S2.

$(2) \Rightarrow (3)$. The S2 condition implies that every ideal of codimension $\geq2$ has depth $\geq2$; see the proof of \cite[Theorem 18.15]{Eis95}.\footnote{Some sources take this as the definition of S2.}

$(\text{Not }4) \Rightarrow (\text{Not }3)$. Assume that $S\subsetneq \Den(R,I)$, and let $f\in I$.  By Lemma \ref{lemma: satcriterion} and Proposition \ref{prop: densat},
\[S\subsetneq (Sf:(SI)^\infty)f^{-1} = Sf^{-1}\cap \Den(S,SI)\]
Since $S$ is Noetherian, $SI$ is finitely-generated, and so it is possible to find an element $g\in Sf^{-1}\cap \Den(S,SI)$ such that $g\not\in S$ but $gI\subseteq S$.
The natural short exact sequence
\[ 0\rightarrow S\hookrightarrow Sg\rightarrow Sg/S\rightarrow0\]
is an essential extension,
and so $Ext^1_S(Sg/S,S)\neq 0$.

The map $S/SI\rightarrow Sg/S$ which sends $1$ to $g$ is a surjection, and its kernel $K$ is a torsion $S$-module.  Hence, there is a long exact sequence which contains  
\[ \dots \rightarrow Hom_S(K,S)\rightarrow Ext^1_S(Sg/S,S)\rightarrow Ext^1_S(S/SI,S)\rightarrow...\]
Since $K$ is torsion, $Hom_S(K,S)=0$, and so $Ext^1_S(S/SI,S)\neq0$.

$(S \text{ normal})+(\text{Not }1) \Rightarrow (\text{Not }4)$.  Assume that $S$ is normal, and that $\codim(SI)=1$.  Therefore, there is a prime $S$-ideal $P$ containing $SI$ with $\codim(P)=1$.
By Serre's criterion \cite[Theorem 11.5.ii]{Eis95},  the localization $S_P$ is a discrete valuation ring.  Let $\nu:\K(S)^*\rightarrow \ZZ$ be the corresponding valuation.

Let $a_1,a_2,...,a_j$ generate $P$ over $S$.  Then $a_1,a_2,...,a_j$ generate $S_PP$ over $S_P$.  There must be some $a_i$ with $\nu(a_i)=1$, and this element generates $S_PP$.  Reindexing as needed, assume that $\nu(a_1)=1$.
For each $a_i$, there exists $f_i,g_i\in S-P$ such that
\[ a_i = \frac{f_i}{g_i}a_1^{\nu(a_i)}\]
Let $d=gcd(\nu(a_i))$.  Then, for all $1\leq k\leq j$,
\[ x:= \frac{1}{a_1^d}\left(\prod_{1<i\leq j} g_i^{\frac{d}{\nu(a_i)}}\right) = \left(\frac{f_k}{a_k^d}\right)^{\frac{d}{\nu(a_k)}}\left(\prod_{\stackrel{1<i\leq j}{ i\neq k}} g_i^{\frac{d}{\nu(a_i)}}\right) \in S[a_k^{-1}]\]
It follows that $x\in \Den(S,P)\subseteq \Den(S,SI) =\Den(R,I)$.  However, since $\nu(x)=-d$, it follows that $x\not\in S$, and so $S\neq \Den(R,I)$.
\end{proof}

\begin{rem}
The implication $(1)\Rightarrow (4)$ is one form of the `algebraic Hartog lemma', in analogy with Hartog's lemma in complex analysis.
\end{rem}

\begin{rem}
The assumption that $S$ is Noetherian is essential.  If
\[R=S=\mathbb{C}[[x^t \mid t\in \QQ_{\geq0}]]\]
is the ring of Puiseux series without denominator, and $I$ is generated by $\{x^t\}_{t>0}$, then
$R$ is normal and $Ext^1(R/I,R)=0$. 
Nevertheless,
\[\Den(R,I) = \mathbb{C}[[x^t \mid t\in \QQ]] \neq R\]
is the field of all Puiseux series.
\end{rem}

\subsection{Criteria for $\U$}

We restate the previous criteria for upper cluster algebras.


\begin{lemma}\label{lemma: criterion}
If $\A$ is a cluster algebra with deep ideal $\D$, and $\S$ is a Noetherian ring such that $\A\subseteq \S\subseteq \U$, then the following are equivalent.
\begin{enumerate}
	\item $\S=\U$.
	\item $\S$ is normal and $\codim(\S\D)\geq2$.
	\item $\S$ is $S2$ and $\codim(\S\D)\geq 2$.
	\item $Ext^1_{\S}(\S/\S\D,\S)=0$.
	\item $\S f=(\S f:(\S \D)^\infty)$, where $f:=x_1x_2...x_\nm$ for some cluster $\x=\{x_1,...,x_\nr\}$.
\end{enumerate}
If $\S f\neq(\S f:(\S \D)^\infty)$, then $(\S f:(\S \D)^\infty)f^{-1}$ contains elements of $\U$ not in $\S$.
\end{lemma}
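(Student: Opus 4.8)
The plan is to derive Lemma~\ref{lemma: criterion} as a direct translation of the two preceding general lemmas, using the identification $\U=\Den(\A,\D)$. First I would invoke Proposition~\ref{prop: Uasden} to write $\U=\Den(\A,\D)$, and then apply Lemma~\ref{lemma: Hartog} with $R=\A$, $I=\D$, and $S=\S$: since $\S$ is assumed Noetherian with $\A\subseteq\S\subseteq\U=\Den(\A,\D)$, that lemma gives the chain of implications $(1_{\text{Hartog}})\Rightarrow(2_{\text{Hartog}})\Rightarrow(3_{\text{Hartog}})\Rightarrow(4_{\text{Hartog}})$, and — crucially — the final sentence of Lemma~\ref{lemma: Hartog} promotes these to equivalences \emph{once we also know $\S$ is normal}. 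Here the subtlety is that conditions (1)--(4) of Lemma~\ref{lemma: criterion} are all equivalent even though normality is not assumed a priori: one notes that condition (2) of Lemma~\ref{lemma: criterion} \emph{includes} normality of $\S$, so once any one of (1)--(4) forces normality, the cycle closes. The cleanest route is: $(2)\Rightarrow(3)\Rightarrow(4)\Rightarrow(1)$ is exactly the Hartog chain, and $(1)\Rightarrow(2)$ holds because $\U$ is normal by Proposition~\ref{prop: Unormal} (so $\S=\U$ is normal) and because $\codim(\U\D)\geq 2$ — the latter being precisely the content of $(4)\Rightarrow(1)$ read in reverse via the normal-and-Noetherian equivalence in Lemma~\ref{lemma: Hartog}, applied with $S=\U$ itself. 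Alternatively, $(1)\Rightarrow(2)$: if $\S=\U$ then $\S$ is normal, and then Lemma~\ref{lemma: Hartog}'s equivalence (valid since $\S$ is now known normal and Noetherian) turns $(4_{\text{Hartog}})$, i.e. $\S=\Den(\A,\D)$, back into $(1_{\text{Hartog}})$, i.e. $\codim(\S\D)\geq2$.

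Next I would handle condition (5) and the concluding ``furthermore'' sentence. For this I would apply Lemma~\ref{lemma: satcriterion} and Proposition~\ref{prop: dencheck} with $R=\A$, $I=\D$, $S=\S$, and the specific element $f:=x_1x_2\cdots x_\nm$ for a chosen cluster $\x$. I must first observe that this $f$ lies in $\D$ — immediate from the definition of the deep ideal as $\D=\sum_{\text{clusters}}\A x_1\cdots x_\nm$ — so the hypotheses of Proposition~\ref{prop: dencheck} are met. Then by Proposition~\ref{prop: denintermediate}, $\Den(\A,\D)=\Den(\S,\S\D)$, so Proposition~\ref{prop: dencheck} applied to the ring $\S$ and ideal $\S\D$ (noting $f\in\S\D$ too) says $\S=\Den(\S,\S\D)=\U$ if and only if $\S f=(\S f:(\S\D)^\infty)$. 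This gives $(1)\Leftrightarrow(5)$. For the final assertion: if $\S f\neq(\S f:(\S\D)^\infty)$, then Lemma~\ref{lemma: satcriterion} gives the strict containment $\S\subsetneq(\S f:(\S\D)^\infty)f^{-1}\subseteq\Den(\A,\D)=\U$, which says exactly that $(\S f:(\S\D)^\infty)f^{-1}$ contains elements of $\U$ outside $\S$.

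I anticipate the only real delicacy is the bookkeeping around normality in the equivalence of (1)--(4): Lemma~\ref{lemma: Hartog} only asserts full equivalence \emph{under the standing hypothesis that $S$ is normal}, whereas Lemma~\ref{lemma: criterion} wants (1)--(4) equivalent with normality appearing merely as part of statement (2). The resolution — that $\S=\U$ is automatically normal by Proposition~\ref{prop: Unormal}, which ``seeds'' the normality needed to run Lemma~\ref{lemma: Hartog}'s equivalence — is short but must be stated explicitly, since otherwise the deduction $(1)\Rightarrow(2)$ is not formally licensed by the cited results. Everything else is a mechanical substitution $R\rightsquigarrow\A$, $I\rightsquigarrow\D$, $S\rightsquigarrow\S$ into results already proved in the excerpt. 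I would therefore write the proof as: ``By Proposition~\ref{prop: Uasden}, $\U=\Den(\A,\D)$. Apply Lemma~\ref{lemma: Hartog} with $R=\A$, $I=\D$, $S=\S$ to get $(2)\Rightarrow(3)\Rightarrow(4)\Rightarrow(1)$. For $(1)\Rightarrow(2)$, note $\S=\U$ is normal by Proposition~\ref{prop: Unormal}, so the normal-Noetherian equivalence in Lemma~\ref{lemma: Hartog} applies and yields $\codim(\S\D)\geq2$. Since $f=x_1\cdots x_\nm\in\D$, Propositions~\ref{prop: denintermediate} and~\ref{prop: dencheck} give $(1)\Leftrightarrow(5)$, and the final claim follows from Lemma~\ref{lemma: satcriterion}.''
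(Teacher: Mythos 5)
Your proof is correct and fills in precisely what the paper leaves implicit — the paper offers no written proof of Lemma~\ref{lemma: criterion}, presenting it as a ``restatement'' of Lemma~\ref{lemma: Hartog}, Lemma~\ref{lemma: satcriterion}, and Propositions~\ref{prop: dencheck}--\ref{prop: denintermediate} under the identification $\U=\Den(\A,\D)$ from Proposition~\ref{prop: Uasden}. You correctly isolate the one non-mechanical step, namely that $(1)$ forces normality of $\S$ via Proposition~\ref{prop: Unormal}, which is what licenses the strong (equivalence) form of Lemma~\ref{lemma: Hartog} and closes the cycle $(1)\Rightarrow(2)\Rightarrow(3)\Rightarrow(4)\Rightarrow(1)$ — exactly the role played by the paper's remark, after Lemma~\ref{lemma: tccriterion}, that normality of the upper bound invokes the strong form.
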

%
%

However, we are interested in infinite-type cluster algebras, where the containments $\A\subseteq\S\subseteq \U$ cannot be naively verified by hand or computer.  This is where lower and upper bounds are helpful, since the analogous containments can be checked directly.

\begin{lemma}\label{lemma: tccriterion}
If $(\x,\sB)$ is seed in a totally coprime cluster algebra $\A$ and $\S$ a Noetherian ring such that $L_\x\subseteq \S\subseteq \U_\x$, then the following are equivalent.
\begin{enumerate}
	\item $\S=\U_\x=\U$.
	\item $\S$ is normal and $\codim(\S\D_\x)\geq2$.
	\item $\S$ is $S2$ and $\codim(\S\D_\x)\geq 2$.
	\item $Ext^1_{\S}(\S/\S\D_\x,\S)=0$.
	\item $\S f=(\S f:(\S \D_\x)^\infty)$, where $f$ is the product of the mutable variables in $\x$.
\end{enumerate}
If $\S f\neq(\S f:(\S \D_\x)^\infty)$, then $(\S f:(\S \D_\x)^\infty)f^{-1}$ contains elements of $\U$ not in $\S$.
\end{lemma}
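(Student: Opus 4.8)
The plan is to deduce Lemma \ref{lemma: tccriterion} from Lemma \ref{lemma: criterion} (the ``Criteria for $\U$'' lemma) by the same argument that produced Lemma \ref{lemma: criterion} from Lemma \ref{lemma: Hartog} and Lemma \ref{lemma: satcriterion}, but now applied with the ring $L_\x$ and the ideal $\D_\x$ in place of $\A$ and $\D$. The key observation is Proposition \ref{prop: Uxasden}, which says $\Den(L_\x,\D_\x)=\U_\x$, exactly parallel to Proposition \ref{prop: Uasden}'s statement that $\Den(\A,\D)=\U$. So conditions $(2)$--$(5)$ are instances of conditions $(2)$--$(4)$ and the saturation criterion of Lemma \ref{lemma: satcriterion}/Proposition \ref{prop: dencheck} with $R=L_\x$, $I=\D_\x$, and $f$ the product of the mutable variables of $\x$ (which lies in $\D_\x$ by construction of $\D_\x$). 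The only genuinely new ingredient, and the thing that forces the total coprimeness hypothesis, is the extra equality ``$\U_\x=\U$'' appearing in condition $(1)$; this is precisely Theorem \ref{thm: tpupper}.

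First I would record that, since $L_\x\subseteq\S\subseteq\U_\x=\Den(L_\x,\D_\x)$, Lemma \ref{lemma: Hartog} applies verbatim with $R=L_\x$ and $I=\D_\x$: it gives that $\S$ Noetherian makes $(1')\Rightarrow(2')\Rightarrow(3')\Rightarrow(4')$ where $(1')$ is ``$\S$ normal and $\codim(\S\D_\x)\ge2$'', $(2')$ is the $S2$ statement, $(3')$ is $Ext^1_\S(\S/\S\D_\x,\S)=0$, and $(4')$ is $\S=\Den(L_\x,\D_\x)=\U_\x$; and if $\S$ is in addition normal these are all equivalent. Then I would invoke Theorem \ref{thm: tpupper}: since $\A$ is totally coprime, $\U_\x=\U$, so $(4')$ is the same as ``$\S=\U_\x=\U$,'' which is condition $(1)$ of the lemma. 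To close the loop and remove the normality caveat, I would use Proposition \ref{prop: dencheck} (applied with $R=L_\x$, $I=\D_\x$, $f$ the product of the mutables of $\x$, noting $f\in\D_\x$): it gives the equivalence of $(4')$ with ``$\S f=(\S f:(\S\D_\x)^\infty)$'' after first passing from $L_\x$ to $\S$ via Proposition \ref{prop: denintermediate}, so that $\Den(L_\x,\D_\x)=\Den(\S,\S\D_\x)$ and the saturation is taken inside $\S$ as in condition $(5)$. This yields the cycle $(1)\Leftrightarrow(4')\Leftrightarrow(5)$, and combined with the chain of implications from Lemma \ref{lemma: Hartog} this gives the full equivalence of $(1)$ through $(5)$.

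For the final assertion, if $\S f\neq(\S f:(\S\D_\x)^\infty)$, then by Lemma \ref{lemma: satcriterion} applied with $R=L_\x$, $I=\D_\x$ we have the strict containment $\S\subsetneq(\S f:(\S\D_\x)^\infty)f^{-1}\subseteq\Den(L_\x,\D_\x)=\U_\x$, and by Theorem \ref{thm: tpupper} the right-hand side is $\U$; so $(\S f:(\S\D_\x)^\infty)f^{-1}$ contains elements of $\U$ not in $\S$, as claimed. I expect no serious obstacle here: the proof is essentially a dictionary translation of the already-proved Lemma \ref{lemma: criterion}, with the substitution $(\A,\D)\rightsquigarrow(L_\x,\D_\x)$ legitimized by Proposition \ref{prop: Uxasden} and the identification $\U_\x=\U$ legitimized by total coprimeness via Theorem \ref{thm: tpupper}. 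The one point deserving care is making sure the saturation in condition $(5)$ is interpreted over $\S$ rather than over $L_\x$; this is exactly what Proposition \ref{prop: denintermediate} licenses, since $L_\x\subseteq\S\subseteq\Den(L_\x,\D_\x)$ forces $\Den(L_\x,\D_\x)=\Den(\S,\S\D_\x)$, so the statement is insensitive to that choice.
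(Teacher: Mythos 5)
Your overall plan is the right one and matches the paper's intent: substitute $(L_\x,\D_\x)$ for $(\A,\D)$ using Proposition \ref{prop: Uxasden}, apply Lemma \ref{lemma: Hartog} and Proposition \ref{prop: dencheck} (plus Proposition \ref{prop: denintermediate} to pass the saturation from $L_\x$ to $\S$), and use Theorem \ref{thm: tpupper} to replace $\U_\x$ by $\U$. The final assertion via Lemma \ref{lemma: satcriterion} is also handled correctly.

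However, there is a gap in how you claim to close the equivalence cycle. You produce the cycle $(1)\Leftrightarrow(4')\Leftrightarrow(5)$ and the chain $(2)\Rightarrow(3)\Rightarrow(4)$ from the weak form of Lemma \ref{lemma: Hartog}, and assert that together these give full equivalence of $(1)$--$(5)$. They do not: nothing you cite gives $(1)\Rightarrow(2)$ or $(5)\Rightarrow(2)$, so conditions $(2)$ and $(3)$ remain one-way targets rather than equivalent statements. Proposition \ref{prop: dencheck} only supplies the equivalence of $(4')$ with the saturation condition; it does not ``remove the normality caveat'' as you suggest, because it says nothing about $\codim(\S\D_\x)$. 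What actually closes the loop, and is exactly the content of the paper's one-line note following the lemma, is the observation (Remark \ref{rem: internormal}) that $\U_\x$ is normal, being an intersection of normal Laurent rings. Hence if $(1)$ holds then $\S=\U_\x$ is itself normal, so the \emph{strong} form of Lemma \ref{lemma: Hartog} applies to $\S$ and yields $(1)\Rightarrow(2)$, completing the cycle. You should add this step explicitly; without it, the equivalence of $(2)$, $(3)$ with the rest is unproved.
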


Note that $\U_\x$ is normal by Remark \ref{rem: internormal}, and so the strong form of Lemma \ref{lemma: Hartog} applies.
\begin{rem}
Criterion $(2)$ was used implicitly in the proofs of \cite[Theorem 2.10]{BFZ05} and \cite[Proposition 7]{Sco06}, and a form of it is stated in \cite[Proposition 3.6]{FP}.
\end{rem}

%

\section{Presenting $\U$}

This section outlines the steps for checking if a set of Laurent polynomials generates a totally coprime upper cluster algebra $\U$ over the subring of frozen variables.  

\subsection{From conjectural generators to a presentation}

\def\ZP{\mathbb{Z}\mathbb{P}}

Fix a seed $(\x=\{x_1,...,x_\nm \},\sB)$ in a totally coprime cluster algebra $\A$.  Let
\[\ZP:= \ZZ[x_{\nm+1}^{\pm1},x_{\nm+2}^{\pm1},...,x_{\nr}^{\pm1}]\]
be the \emph{coefficient ring} \--- the Laurent ring generated by the frozen variables and their inverses.

Start with a finite set of Laurent polynomials in $\ZZ[x_1^{\pm1},...,x_{\nr}^{\pm1}]$, which hopefully generates $\U$ over $\ZP$. We assume that all the initial mutable variables $x_1,...,x_{\nr}$ are in this set. 
Write this set as
\[ x_1,x_2,...x_{\nm},y_1,...., y_p\]
where
\[ y_i=\frac{N_i(x_1,...,x_n)}{x_1^{\alpha_{1i}}x_2^{\alpha_{2i}}...x_{\nr}^{\alpha_{\nr i}}}\in
\ZZ[x_1^{\pm1},...,x_{\nr}^{\pm1}]\]
for some polynomial $N_i(x_1,...,x_n)$.

\begin{itemize}
	\item \textbf{Compute the ideal of relations.} Let
	\[\widetilde{\S}:=\ZP[x_1,...,x_\nm,y_1,...,y_p] \]
	be a polynomial ring over $\ZP$ (here, the $y_i$s are just symbols).  Define $\widetilde{I}$ to be the $\widetilde{\S}$-ideal generated by elements of the form
	\[ y_i(x_1^{\alpha_{1i}}x_2^{\alpha_{2i}}...x_{\nr}^{\alpha_{\nr i}}) - N_i(x_1,...,x_\nr)\]
	as $i$ runs from $1$ to $p$.  Let $I:= (\widetilde{I}:\widetilde{\S}(x_1...x_{\nm})^\infty)$ be the saturation of $I$ by the principal $\widetilde{\S}$-ideal generated by the product of the mutable variables $x_1x_2...x_{\nm}$.
	
	\begin{lemma}\label{lemma: presentation}
		The sub-$\ZP$-algebra of $\ZZ[x_1^{\pm1},...,x_{\nr}^{\pm1}]$ generated by
		\[ x_1,x_2,...x_{\nm},y_1,...., y_p\]
		is naturally isomorphic to the quotient $\S:=\widetilde{\S}/I$.
	\end{lemma}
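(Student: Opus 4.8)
The plan is to realize the sub-$\ZP$-algebra in question as the image of an explicit ring map out of $\widetilde{\S}$ and then to identify its kernel with $I$. First I would define the $\ZP$-algebra homomorphism $\phi\colon \widetilde{\S}\to \ZZ[x_1^{\pm1},\dots,x_{\nr}^{\pm1}]$ sending the symbol $x_i$ to the cluster variable $x_i$ and the symbol $y_i$ to the Laurent polynomial $y_i=N_i/(x_1^{\alpha_{1i}}\cdots x_{\nr}^{\alpha_{\nr i}})$. By construction the image of $\phi$ is precisely the sub-$\ZP$-algebra generated by $x_1,\dots,x_{\nm},y_1,\dots,y_p$, so it suffices to show $\ker\phi=I$; the claimed isomorphism is then the one induced by $\phi$.

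For the inclusion $I\subseteq\ker\phi$ I would first check directly that $\phi$ kills each generator $y_i(x_1^{\alpha_{1i}}\cdots x_{\nr}^{\alpha_{\nr i}})-N_i$ of $\widetilde{I}$, so $\widetilde{I}\subseteq\ker\phi$. Since the target $\ZZ[x_1^{\pm1},\dots,x_{\nr}^{\pm1}]$ is a domain in which $\phi(x_1\cdots x_{\nm})$ is a unit, $\ker\phi$ is already saturated with respect to $x_1\cdots x_{\nm}$, and hence contains $I=(\widetilde{I}\colon \widetilde{\S}(x_1\cdots x_{\nm})^\infty)$.

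The reverse inclusion $\ker\phi\subseteq I$ is the substance. I would take $g\in\ker\phi$, write it (uniquely, as $\widetilde{\S}$ is a polynomial ring over $\ZP$) as $g=\sum_{\vec e}c_{\vec e}\,y_1^{e_1}\cdots y_p^{e_p}$ with $c_{\vec e}\in\ZP[x_1,\dots,x_{\nm}]$, and use the defining relations of $\widetilde{I}$ to reduce modulo $\widetilde{I}$: repeatedly replacing $y_i(x_1^{\alpha_{1i}}\cdots x_{\nr}^{\alpha_{\nr i}})$ by $N_i$ turns each $y$-monomial, after multiplication by a suitable power of $f:=x_1\cdots x_{\nm}$, into an honest polynomial. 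Choosing a single exponent $K$ that dominates all the (finitely many) monomials occurring in $g$, this gives $f^{K}g\equiv h\pmod{\widetilde{I}}$ with $h\in\ZP[x_1,\dots,x_{\nm}]$. Now $\widetilde{I}\subseteq\ker\phi$ and $\phi(g)=0$ force $\phi(h)=\phi(f)^{K}\phi(g)=0$; but $\phi$ restricted to $\ZP[x_1,\dots,x_{\nm}]$ is the tautological inclusion into $\ZZ[x_1^{\pm1},\dots,x_{\nr}^{\pm1}]$, which is injective because $x_1,\dots,x_{\nr}$ are algebraically independent over $\QQ$. Hence $h=0$, so $f^{K}g\in\widetilde{I}$ and $g\in(\widetilde{I}\colon\widetilde{\S}f^\infty)=I$, as wanted.

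The one place I expect to need care is the reduction $f^{K}g\equiv h\pmod{\widetilde{I}}$: one has to track the exponent vectors $(\alpha_{1i},\dots,\alpha_{\nr i})$ across all the monomials of $g$ to see that a common power $K$ works, and one must keep straight that clearing denominators costs only powers of the \emph{mutable} variables $x_1,\dots,x_{\nm}$, since the frozen $x_{\nm+1},\dots,x_{\nr}$ are already invertible in the ground ring $\ZP$. Granting that bookkeeping, the argument is purely formal, and no hypothesis beyond the algebraic independence of the cluster (built into the definition of a seed) is used.
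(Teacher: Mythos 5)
Your proof is correct, and it establishes the same statement along the same lines as the paper: identify the sub-$\ZP$-algebra with the image of the evaluation map $\phi\colon\widetilde{\S}\to\ZZ[x_1^{\pm1},\dots,x_\nr^{\pm1}]$ and show $\ker\phi=I$. The difference is in how that kernel is identified. The paper's argument is a one-shot localization/elimination observation: invert $f=x_1\cdots x_\nm$ in $\widetilde{\S}$, whereupon the generators of $\widetilde{I}$ become $y_i-(\text{Laurent monomial})N_i$, so $\widetilde{\S}[f^{-1}]/\widetilde{\S}[f^{-1}]\widetilde{I}\cong\ZZ[x_1^{\pm1},\dots,x_\nr^{\pm1}]$ by elimination, and then it invokes the standard fact that the kernel of $\widetilde{\S}\to\widetilde{\S}[f^{-1}]/\widetilde{\S}[f^{-1}]\widetilde{I}$ is precisely the saturation $(\widetilde{I}:\widetilde{\S}f^\infty)$. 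Your version unpacks that standard fact by hand: one containment from $\phi(f)$ being a unit in a domain, the other by explicitly clearing denominators with $f^K$ and reducing modulo $\widetilde{I}$ to land in $\ZP[x_1,\dots,x_\nm]$, where injectivity of $\phi$ is algebraic independence of the cluster. The bookkeeping point you flag (that only the mutable variables need to be cleared, since the frozen ones are already units in $\ZP$) is exactly right and is what makes the reduction terminate; the paper sidesteps it by inverting $f$ once and for all. Both proofs are valid; the paper's is more compact, yours makes the saturation-as-kernel step fully explicit.
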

	\begin{proof}
	Let the localization $\widetilde{\S}[(x_1x_2...x_\nm)^{-1}]$ is the ring \[\ZZ[x_1^{\pm1},...,x_{\nr}^{\pm1},y_1,...,y_p]\]
    The induced ideal $\widetilde{\S}[(x_1x_2...x_\nm)^{-1}]\widetilde{I}$ is generated by elements of the form
	\[ y_i - (x_1^{-\alpha_{1i}}x_2^{-\alpha_{2i}}...x_{\nr}^{-\alpha_{\nr i}})N_i(x_1,...,x_\nr)\]
	and so the quotient $\widetilde{\S}[(x_1x_2...x_\nm)^{-1}]/\widetilde{\S}[(x_1x_2...x_\nm)^{-1}]\widetilde{I}$ eliminates the $y_i$s and is isomorphic to $\ZZ[x_1^{\pm1},...,x_{\nr}^{\pm1}]$.  The kernel of the composition
	\[ \widetilde{\S}\rightarrow\widetilde{\S}[(x_1x_2...x_\nm)^{-1}]\rightarrow \ZZ[x_1^{\pm1},...,x_{\nr}^{\pm1}]\]
	consists of elements $r\in \widetilde{\S}$ such that $(x_1x_2...x_{\nm})^ir\in I$ for some $i$; this is the saturation $I$.
	\end{proof}

    \item \textbf{Verify that $L_\x\subseteq \S\subseteq \U_\x$.}  For the first containment, it suffices to check that $x_1',x_2',...,x_{\nm}'\in \S$, because the other generators of $L_\x$ are in $\S$ by construction.
    For the second containment, it suffices to check that for each $1\leq i\leq \nm$ and $1\leq k\leq p$,
        \[ y_k\in \ZZ[x_1^{\pm1},...,x_i'^{\pm1},...,x_\nr^{\pm1}]\]
        This is because $x_1,...,x_{\nm},x_{\nm+1}^{\pm1},...,x_{\nr}^{\pm1}$ are in $\U_\x$ by the Laurent phenomenon.

    \item \textbf{Check whether $\S=\U$ using Lemma \ref{lemma: tccriterion}.}  Any of the four criteria $(2)-(5)$ in Lemma \ref{lemma: tccriterion} can be used.  They all may be implemented by a computer, and each method potentially involves a different algorithm, so any of the four might be the most efficient computationally.

    \item \textbf{If $\S\subsetneq \U$, find additional generators and return to the beginning.}  If $\S\neq \U$, then $(\S f: (\S\D_\x)^\infty)f^{-1}$ contains elements of $\U$ which are not in $\S$ (where $f=x_1x_2...x_\nm$).  One or more of these elements may be added to the original list of Laurent polynomials to get a larger guess $S'$ for $\U$.  Note that any $S'$ produced this way satisfies $L_\x\subseteq S'\subseteq \U_\x$.

\end{itemize}

\subsection{An iterative algorithm}\label{section: algorithm}

The preceeding steps can be regarded as an iterative algorithm for producing successively larger subrings $S\subseteq \U$ as follows.  Start with an initial guess $L_\x\subseteq S\subseteq \U_\x$.  In lieu of cleverness, the lower bound $L_\x=S$ makes an functional initial guess; this amounts to starting with generators $x_1,...,x_\nm,x_1',...,x_\nm'$.

Denote $S_1:=S$, and inductively define $S_{i+1}$ to be the sub-$\ZP$-algebra of $\QQ(x_1,x_2,...,x_\nr)$ generated by $S_i$ and $(S_i f:(S_i I)^\infty)f^{-1}$.  If $S_i$ is finitely generated over $\ZP$ (resp. Noetherian), then the saturation $(S_i f:(S_iI)^\infty)$ is finitely generated over $S_i$ and so $S_{i+1}$ is finitely generated over $\ZP$ (resp. Noetherian).

This gives a nested sequence of subrings
\[ L_\x\subseteq S=S_1\subseteq S_2\subseteq S_3\subseteq ...\subseteq \U=\U_\x\]
By  Lemma \ref{lemma: satcriterion}, if $S_i=S_{i+1}$, then $S_i=S_{i+1}=S_{i+2}=...=\U=\U_\x$.

\begin{prop}
If $\U$ is finitely generated over $S$, then for some $i$, $S_i=\U$.
\end{prop}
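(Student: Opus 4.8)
The plan is to reduce the claim to a finite-generation/Noetherian stabilization argument applied to the nested sequence $S = S_1 \subseteq S_2 \subseteq \cdots \subseteq \U$. First I would fix a finite generating set $g_1, \dots, g_r$ for $\U$ as an $S$-algebra (which exists by hypothesis). The key observation is Lemma \ref{lemma: satcriterion}: at each stage, $S_{i+1}$ is the $\ZP$-subalgebra generated by $S_i$ together with $(S_i f : (S_i I)^\infty)f^{-1} = S_i f^{-1} \cap \Den(L_\x, \D_\x)$, and this equals $\U$ exactly when $S_{i+1} = S_i$. So it suffices to show the chain stabilizes, i.e. that some $S_i$ already contains all of $\U$, equivalently all of $g_1, \dots, g_r$.

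The heart of the matter is that each $g_j \in \U = \Den(L_\x, \D_\x)$ can be written as $g_j = n_j / f^{m_j}$ for some $n_j \in L_\x \subseteq S_1$ and some $m_j \in \NN$, using the Laurent phenomenon (or directly: $\U \subseteq \ZZ[x_1^{\pm 1}, \dots, x_\nr^{\pm 1}]$, so every element of $\U$ has a power of $f = x_1 \cdots x_\nm$ as a denominator). Thus I would show by induction on $m$ that if $g \in \U$ and $f^m g \in S_i$, then $g \in S_{i+m}$. The base case $m = 0$ is trivial. For the inductive step: if $f^m g \in S_i$ with $m \geq 1$, set $h := f^{m-1} g \in \U$; then $f h = f^m g \in S_i$, so $h \in S_i f^{-1} \cap \Den(L_\x, \D_\x) \subseteq S_{i+1}$ by Proposition \ref{prop: densat} and the description of $S_{i+1}$; now $f^{m-1} g = h \in S_{i+1}$, and by induction $g \in S_{i+1 + (m-1)} = S_{i+m}$. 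Applying this to each $g_j$ with $i = 1$ and $m = m_j$, we get $g_j \in S_{1 + m_j}$, hence all of $g_1, \dots, g_r$ lie in $S_N$ where $N := 1 + \max_j m_j$. Since these generate $\U$ over $S \subseteq S_N$, we conclude $S_N = \U$, and therefore $S_i = \U$ for this $i = N$.

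One subtlety to pin down carefully is that $S_{i+1}$ really does contain all of $S_i f^{-1} \cap \Den(L_\x, \D_\x)$, not merely the subring it generates — but this is immediate since we \emph{defined} $S_{i+1}$ to be the algebra generated by $S_i$ and that set, so the set itself is contained in $S_{i+1}$. Another point: one should confirm that the $n_j$ can be taken in $S_1 = S$ rather than merely in $\U$; this follows because $S \supseteq L_\x$ and the Laurent-expansion numerator of any element of $\U$ with respect to the cluster $\x$ is a polynomial in the $x_i$, hence lies in $L_\x \subseteq S$. I do not expect a genuine obstacle here; the only thing requiring care is bookkeeping the indices in the induction so that the ``$m$ denominator-clearing steps cost $m$ stages'' accounting is airtight.
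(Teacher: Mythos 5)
Your proof is correct and follows essentially the same route as the paper's: both arguments combine Proposition \ref{prop: densat} (identifying $(S_i f : (S_i\D_\x)^\infty)f^{-1}$ with $S_i f^{-1}\cap\U$) with an induction showing that clearing a denominator of $f^m$ from an element of $\U$ costs at most $m$ steps, then apply this to a finite generating set. The only cosmetic difference is that you phrase the inductive claim with two parameters ($i$ and $m$) while the paper states the one-parameter form $Sf^{-i}\cap\U\subseteq S_{i+1}$; these are interchangeable.
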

\begin{proof}
Let $f=x_1x_2...x_\nm$.  By Proposition \ref{prop: densat},
\[(S_if:(S_i \D_\x)^\infty) = S_if^{-1} \cap \U\]
Induction on $i$ shows that
$Sf^{-i}\cap \U \subseteq S_{i+1}$.
If $\U$ is finitely generated over $S$, then there is some $i$ such that $Sf^{-i+1}$ contains a generating set, and so $S_{i}=\U$.
\end{proof}

\begin{coro}\label{coro: finitealgo}
Let be $\A$ a totally coprime cluster algebra, and $S=L_\x$ for some seed in $\A$.  If $\U$ is finitely generated, then $\U=S_i$ for some $i$.
\end{coro}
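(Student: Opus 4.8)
The plan is to deduce Corollary \ref{coro: finitealgo} directly from the Proposition that immediately precedes it, so the only real work is reconciling the hypothesis ``$\U$ is finitely generated'' (presumably as a ring, or as a $\ZP$-algebra) with the Proposition's hypothesis ``$\U$ is finitely generated over $S$''. First I would recall that $S = L_\x$ is itself finitely generated over $\ZP$ by definition: it is generated over $\ZP$ by the finite set $\{x_1,\dots,x_\nm, x_1',\dots,x_\nm'\}$. Then, if $\U$ is finitely generated (as a $\ZP$-algebra), say by elements $u_1,\dots,u_q$, the subring $S[u_1,\dots,u_q] \subseteq \U$ equals $\U$, so $\U$ is generated over $S$ by the finite set $\{u_1,\dots,u_q\}$; that is, $\U$ is finitely generated over $S$ in the sense required by the Proposition. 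Alternatively, and perhaps more cleanly, I would note that $\U = \U_\x$ by Theorem \ref{thm: tpupper} (since $\A$ is totally coprime), and $\U$ finitely generated over $\ZP$ together with $\ZP \subseteq S \subseteq \U$ forces $\U$ to be finitely generated over $S$ (adjoin the same finite generating set).

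With that hypothesis verified, I would invoke the Proposition verbatim: there exists $i$ with $S_i = \U$, where $S_1 = S = L_\x$ and $S_{i+1}$ is built from $S_i$ by adjoining $(S_i f : (S_i I)^\infty) f^{-1}$ as in Section \ref{section: algorithm}. Since $L_\x = S_1$ is finitely generated over $\ZP$ (hence Noetherian), the inductive remark in Section \ref{section: algorithm} guarantees every $S_i$ is finitely generated over $\ZP$, so the construction is well-defined and each saturation step is effective. This gives $\U = \U_\x = S_i$ for some $i$, which is exactly the statement of the corollary.

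The main obstacle — really the only subtlety — is a bookkeeping one: making sure the two notions of ``finitely generated'' line up, i.e.\ that ``$\U$ finitely generated'' in the corollary's hypothesis (finitely generated over $\ZP$, matching the ambient setup of Section \ref{section: algorithm} where everything is a $\ZP$-algebra) does indeed imply ``$\U$ finitely generated over $S$'' as the Proposition requires. This is immediate once one observes $\ZP \subseteq S$, since any finite $\ZP$-algebra generating set for $\U$ is a fortiori a finite $S$-algebra generating set. I would also briefly remark that the totally coprime hypothesis is what licenses replacing $\U$ by $\U_\x$ (via Theorem \ref{thm: tpupper}), so that the whole bounded machinery of Section \ref{section: algorithm} — which is phrased in terms of $\U_\x$, $L_\x$, and $\D_\x$ — actually computes $\U$ itself. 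Thus the proof is essentially one line: ``Since $\ZP \subseteq S = L_\x$, finite generation of $\U$ over $\ZP$ implies finite generation over $S$, and the previous Proposition applies.''
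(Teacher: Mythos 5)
Your proposal is correct and reflects what the paper leaves implicit: since $\ZP \subseteq L_\x = S$, finite generation of $\U$ over $\ZP$ (or over $\ZZ$) immediately implies finite generation of $\U$ over $S$, and the preceding Proposition then applies, with the totally coprime hypothesis ensuring $\U = \U_\x$ so that the bounded saturation steps genuinely compute $\U$. The paper states the corollary without a separate proof, treating it as a direct specialization of the Proposition, which is exactly the reading you give.
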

\noindent In other words, this algorithm will always produce $\U$ in finitely many steps, even starting with the `worst' guess $L_\x$.

\begin{rem}
This algorithm can be implemented by computational algebra software, so long as the initial guess $S$ is finitely presented.  However, in the authors' experience, naively implementing this algorithm was computationally prohibitive after the first step.  A more effective approach was to pick a few simple elements of $(S_i f:(S_iI)^\infty)$ and use them to generate a bigger ring $S_{i+1}$.
\end{rem}

\section{Examples: $\nm=\nr=3$}\label{section: examples1}

The smallest non-acyclic seed will have $\nm=\nr=3$; that is, 3 mutable variables and no frozen variables.  We consider these examples.

%

\subsection{Generalities}

Consider an arbitrary skew-symmetric seed $(\x,\sB_{a,b,c})$ with $\nm=\nr=3$, as in Figure \ref{fig: rank3}.  Let $\A_{a,b,c}$ and $\U_{a,b,c}$ be the corresponding cluster algebra and upper cluster algebra, respectively.\footnote{The notation $\U_{a,b,c}$ is dangerous, in that it leaves no room to distinguish between the upper cluster algebra and the upper bound of $\sB_{a,b,c}$.  However, we will only consider non-acyclic examples, and so by Theorem \ref{thm: tpupper}, these two algebras coincide.  The reader is nevertheless warned.
}

\begin{figure}[h!t]	
	\begin{tikzpicture}
	\begin{scope}
		\node (array) at (0,0) {$\sB_{a,b,c}=\gmat{0 & -a & c \\ a & 0 & -b \\ -c & b & 0 }$};
	\end{scope}
	\begin{scope}[xshift=2.5in]
		\path[use as bounding box] (-1,-1) rectangle  (1,1);
		\node (label) at (-2,0) {$\Q_{a,b,c}=$};
		\node[mutable] (x) at (90:1) {$x_1$};
		\node[mutable] (y) at (210:1) {$x_2$};
		\node[mutable] (z) at (-30:1) {$x_3$};
		\draw[-angle 90,relative, out=15,in=165] (x) to node[inner sep=.1cm] (xy1m) {} (y);
		\draw[-angle 90,relative, out=-20,in=-160] (x) to node[inner sep=.1cm] (xy2m) {} (y);
		\draw[-angle 90,relative, out=-40,in=-140] (x) to node[above left] {$a$} (y); 
		\draw[densely dotted] (xy1m) to (xy2m);
		\draw[-angle 90,relative, out=15,in=165] (y) to node[inner sep=.1cm] (yz1m) {} (z);
		\draw[-angle 90,relative, out=-20,in=-160] (y) to node[inner sep=.1cm] (yz2m) {} (z);
		\draw[-angle 90,relative, out=-40,in=-140] (y) to node[below] {$b$}  (z);
		\draw[densely dotted] (yz1m) to  (yz2m);
		\draw[-angle 90,relative, out=15,in=165] (z) to node[inner sep=.1cm] (zx1m) {} (x);
		\draw[-angle 90,relative, out=-20,in=-160] (z) to node[inner sep=.1cm] (zx2m) {} (x);
		\draw[-angle 90,relative, out=-40,in=-140] (z) to node[above right] {$c$}  (x);
		\draw[densely dotted] (zx1m) to  (zx2m);
	\end{scope}
	\end{tikzpicture}
\caption{A general skew-symmetric seed with 3 mutable variables}
\label{fig: rank3}
\end{figure}
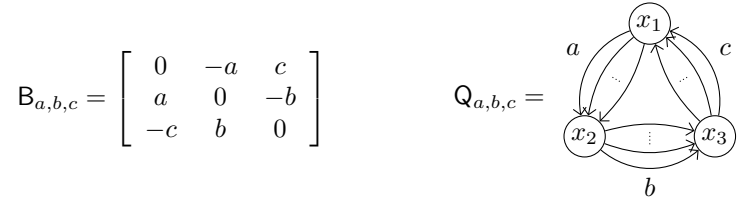


The seed $(\x,\sB_{a,b,c})$ is acyclic unless $a,b,c>0$ or $a,b,c<0$, and permuting the variables can exchange these two inequalities.  Even when $a,b,c>0$, the cluster algebra $\A_{a,b,c}$ may not be acyclic, since there may be a acyclic seed mutation equivalent to $(\x,\sB_{a,b,c})$.  Thankfully, there is a simple inequality which detects when $\A_{a,b,c}$ is acyclic.
\begin{thm}\cite[Theorem 1.1]{BBH11}\label{thm: nonacyclicrank3}
Let $a,b,c>0$.  The seed $(\x,\sB_{a,b,c})$ is mutation-equivalent to an acyclic seed if and only if $a<2$, $b<2$, $c<2$, or
\[ abc-a^2-b^2-c^2+4< 0\]
\end{thm}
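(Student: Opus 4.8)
\emph{Proof plan.} The idea is to analyze the mutation class of $(\x,\sB_{a,b,c})$ purely combinatorially, keeping track only of the triple $(a,b,c)$ and of the moment the quiver degenerates to an acyclic one. For $a,b,c>0$ the quiver $\Q_{a,b,c}$ is the oriented $3$-cycle with $a,b,c$ arrows on its three edges. A short computation with the mutation rule shows that mutating at the vertex opposite the $c$-edge produces the oriented $3$-cycle with data $(a,b,ab-c)$ when $ab-c>0$, and an acyclic quiver when $ab-c\le 0$, with the other two vertices behaving symmetrically. Thus, up to relabeling vertices, mutation acts on triples by the Markov-type substitutions $(a,b,c)\mapsto(a,b,ab-c)$, $(a,b,c)\mapsto(a,ac-b,c)$, $(a,b,c)\mapsto(bc-a,b,c)$, with the caveat that any substitution producing a non-positive value lands on an acyclic seed. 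The key observation is that $g(a,b,c):=a^2+b^2+c^2-abc$ is invariant under each substitution (a one-line check), so the sign of $abc-a^2-b^2-c^2+4=4-g$ is constant along the mutation class. Hence the theorem reduces to two claims: (i) if $\min(a,b,c)<2$ then $(\x,\sB_{a,b,c})$ is mutation-equivalent to an acyclic seed; and (ii) if $a,b,c\ge 2$ then it is mutation-equivalent to an acyclic seed if and only if $g>4$.

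Claim (i) is immediate: if, say, $c=1$, then one of $ac\le b$ or $bc\le a$ holds, and a single mutation at the corresponding vertex yields an acyclic quiver. For the ``if'' half of (ii) I would run a descent on $\max(a,b,c)$. Suppose $2\le a\le b\le c$ and $g\ge 5$ (note $g\in\ZZ$). The engine of the descent is the inequality $ab<2c$, together with $b<c$: indeed, if $ab\ge 2c$ then, since $c\ge b$, $c\le ab/2$, and $t\mapsto t^2-abt$ is decreasing on $[b,ab/2]$, one gets $g\le a^2+b^2-b^2(a-1)=a^2-b^2(a-2)\le a^2(3-a)\le 4$, contradicting $g\ge 5$; the same estimate with $c=b$ shows $b=c$ forces $g\le 4$, so $b<c$. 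Now mutate at the vertex opposite the $c$-edge, replacing $c$ by $c':=ab-c<c$. If $c'\le 0$ we have reached an acyclic seed; if $c'=1$ the new triple has an entry below $2$ and we conclude by claim (i); and if $c'\ge 2$ the new triple still has all entries $\ge 2$, still has $g>4$ by invariance, and has $\max(a,b,c')<c$ because $c'<c$ and $b<c$. Since the maximum is a positive integer it cannot strictly decrease forever, so the process must stop in one of the two favorable cases, proving the ``if'' direction.

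For the ``only if'' half of (ii) I would show the set of triples with $a,b,c\ge 2$ and $g\le 4$ is closed under mutation. Given such a triple and the substitution $c\mapsto c':=ab-c$, invariance gives $a^2+b^2+c'^2-abc'\le 4$, i.e.\ $c'(c'-ab)\le 4-a^2-b^2\le -4$, which forces $0<c'<ab$; moreover $c'=1$ is impossible, since it would give $a^2-ab+b^2\le 3$ whereas $a^2-ab+b^2=(a-b)^2+ab\ge ab\ge 4$. Hence $c'\ge 2$, and the other two substitutions are handled identically by symmetry, so every mutation of such a seed is again a (relabeled) oriented $3$-cycle with all entries $\ge 2$, in particular not acyclic. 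Therefore no seed in the mutation class is acyclic when $a,b,c\ge 2$ and $abc-a^2-b^2-c^2+4\ge 0$, which is the contrapositive of the remaining implication. I expect the real obstacle to be the descent in the ``if'' direction: recognizing that one should always mutate at the vertex opposite the largest edge, proving the inequality $ab<2c$ that makes the maximum actually drop, and cleanly handling the boundary outputs $c'\in\{0,1\}$; the converse, once invariance of $g$ is in hand, is a one-step closure argument.
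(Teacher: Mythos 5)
The paper does not prove this statement; it is cited verbatim from \cite{BBH11}, so there is no internal proof to compare against. Your blind proof is, as far as I can check, a correct self-contained argument, and it is organized around what is almost certainly the same engine as the cited paper: mutation on the triple $(a,b,c)$ acts by the Vieta-type substitutions $c\mapsto ab-c$ (and cyclic relabelings), the quantity $g=a^2+b^2+c^2-abc$ is a mutation invariant, and one runs a descent on $\max(a,b,c)$ when $g>4$ while proving closure of the region $\{a,b,c\ge2,\ g\le4\}$ when $g\le4$. I verified the individual steps: the mutation formula at the vertex opposite the $c$-edge does yield $ab-c$ (acyclic precisely when $ab\le c$); $g$ is indeed invariant; the inequality $g\le a^2-b^2(a-2)\le a^2(3-a)\le4$ correctly forces $ab<2c$ and $b<c$ when $2\le a\le b\le c$ and $g\ge5$, so $\max$ strictly drops; the termination and the boundary cases $c'\in\{0,1\}$ are handled; and in the closed-region direction, $c'(c'-ab)\le4-a^2-b^2\le-4$ forces $0<c'<ab$, with $c'=1$ excluded by $a^2-ab+b^2=(a-b)^2+ab\ge4$. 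The one place a reader might pause is your implicit reduction to sorted triples after each mutation, but since the new maximum $\max(b,ab-c)$ is strictly less than the old one, re-sorting is harmless and the induction on $\max$ goes through. In short: correct, and essentially the Markov-equation/Vieta-jumping argument one expects for this result.
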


Acyclic $\A_{a,b,c}=\U_{a,b,c}$ can be presented using \cite[Corollary 1.21]{BFZ05}; and so we focus on the non-acyclic cases.  As the next proposition shows, these cluster algebras are totally coprime, and so it will suffice to present $\U_\x$.

\begin{prop}\label{prop: 3totallycoprime}
Let $\A$ be a cluster algebra with $\nm=3$.  If $\A$ is not acyclic, then $\A$ is totally coprime.
\end{prop}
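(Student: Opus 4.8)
The plan is to check the definition of total coprimality head-on: prove that every seed of $\A$ is coprime. Since $\nm=3$, an exchange matrix $\sB$ has exactly three columns, so a seed is coprime precisely when the three columns of $\sB$ are pairwise linearly independent over $\QQ$. A useful first reduction is that this can be tested on the principal part alone: column $j$ of $\sB$ restricts on its top three entries to column $j$ of $\sB^0$, so if two columns of $\sB^0$ are independent then so are the corresponding columns of $\sB$. Thus it suffices to show that the principal part of every seed of $\A$ has pairwise independent columns.

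Next I would record the relevant facts about $3\times 3$ skew-symmetrizable matrices. Such a matrix has zero diagonal, and $\sB^0_{ij}=0$ if and only if $\sB^0_{ji}=0$; in particular the mutable quiver on three vertices has no $2$-cycles, so it is acyclic exactly when it contains no directed $3$-cycle. A directed $3$-cycle visits all three vertices, hence requires nonzero entries between each of the three pairs of indices — that is, it forces all six off-diagonal entries of $\sB^0$ to be nonzero. (In the skew-symmetric parametrization $\sB^0=\sB_{a,b,c}$ of Figure \ref{fig: rank3}, this is the statement that $a,b,c\neq0$.) The final ingredient is a one-line linear-algebra observation: if a $3\times 3$ matrix has zero diagonal and nonzero off-diagonal entries, its columns $C_1,C_2,C_3$ are pairwise independent, because $C_j$ has its unique zero in position $j$; if $C_j=\lambda C_k$ with $j\neq k$, comparing position $j$ gives $0=\lambda\,\sB^0_{jk}$ with $\sB^0_{jk}\neq0$, so $\lambda=0$ and $C_j=0$, contradicting that $C_j$ has a nonzero entry.

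Assembling these: let $(\x,\sB)$ be an arbitrary seed of $\A$. Because $\A$ is not acyclic, $(\x,\sB)$ is not acyclic, so its mutable quiver contains a directed $3$-cycle, so every off-diagonal entry of $\sB^0$ is nonzero, so the columns of $\sB^0$ — and therefore the columns of $\sB$ — are pairwise independent, so $(\x,\sB)$ is coprime. Since the seed was arbitrary, $\A$ is totally coprime.

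I do not expect a real obstacle here; the argument is a short combination of the no-$2$-cycle property of skew-symmetrizable matrices with elementary $3\times 3$ linear algebra. The only points needing a word of care are that acyclicity of a seed depends only on the mutable subquiver — immediate, since $\sB$ records no arrows among frozen vertices — and the precise translation ``a $3$-vertex quiver is acyclic if and only if it has no directed $3$-cycle'', which relies on there being no $2$-cycles. It is also worth noting, in connection with the earlier footnote, that when $\nm=\nr=3$ the exchange matrix $\sB=\sB^0$ is a $3\times 3$ skew-symmetrizable matrix, hence singular; so these totally coprime cluster algebras are genuinely not of full rank, and Theorem \ref{thm: fullrank} cannot be used to obtain the conclusion.
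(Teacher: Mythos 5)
Your proof is correct and takes essentially the same route as the paper's: both observe that the no-$2$-cycle property forces a non-acyclic $3$-vertex quiver to contain a directed $3$-cycle through every vertex, so every off-diagonal entry of $\sB^0$ is nonzero, and pairwise independence of columns then follows from the zero diagonal. Your side remarks (the reduction to $\sB^0$ for $\nr>\nm$, and the observation that for $\nr=\nm=3$ the matrix $\sB$ is singular so Theorem \ref{thm: fullrank} would not apply) are accurate and match the remark the paper makes after this proposition and the footnote after Theorem \ref{thm: fullrank}.
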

\begin{proof}
Let $(\x,\sB)$ be a non-acyclic seed for $\A$ with quiver $\Q$; that is, there is a directed cycle of mutable cluster variables.  There are no 2-cycles in $\Q$ by construction, and so the directed cycle in $\Q$ passes through every vertex.  It follows that $\sB_{ij}\neq0$ if $i\neq j$.  Then the $i$th and $j$th columns are linearly independent, because $\sB_{ii}=0$ and $\sB_{ij}\neq0$.  Hence, $(\x,\sB)$ is a coprime seed, and $\A$ is totally coprime.
%
\end{proof}

\begin{rem}
This proof does not assume that $\sB^0$ is skew-symmetric or that $\nr=3$ (ie, that there are no frozen variables).
\end{rem}


%
%
%
%
%

\subsection{The $(a,a,a)$ cluster algebra}\label{section: aaa}

Consider $a=b=c\geq0$ as in Figure \ref{fig: aaa}.

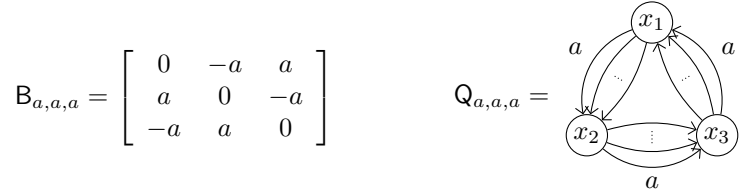
\begin{figure}[h!t]	
	\begin{tikzpicture}
	\begin{scope}
		\node (array) at (0,0) {$\sB_{a,a,a}=\gmat{0 & -a & a \\ a & 0 & -a \\ -a & a & 0 }$};
	\end{scope}
	\begin{scope}[xshift=2.5in]
		\path[use as bounding box] (-1,-1) rectangle  (1,1.25);
		\node (label) at (-2,0) {$\Q_{a,a,a}=$};
		\node[mutable] (x) at (90:1) {$x_1$};
		\node[mutable] (y) at (210:1) {$x_2$};
		\node[mutable] (z) at (-30:1) {$x_3$};
		\draw[-angle 90,relative, out=15,in=165] (x) to node[inner sep=.1cm] (xy1m) {} (y);
		\draw[-angle 90,relative, out=-20,in=-160] (x) to node[inner sep=.1cm] (xy2m) {} (y);
		\draw[-angle 90,relative, out=-40,in=-140] (x) to node[above left] {$a$} (y); 
		\draw[densely dotted] (xy1m) to (xy2m);
		\draw[-angle 90,relative, out=15,in=165] (y) to node[inner sep=.1cm] (yz1m) {} (z);
		\draw[-angle 90,relative, out=-20,in=-160] (y) to node[inner sep=.1cm] (yz2m) {} (z);
		\draw[-angle 90,relative, out=-40,in=-140] (y) to node[below] {$a$}  (z);
		\draw[densely dotted] (yz1m) to  (yz2m);
		\draw[-angle 90,relative, out=15,in=165] (z) to node[inner sep=.1cm] (zx1m) {} (x);
		\draw[-angle 90,relative, out=-20,in=-160] (z) to node[inner sep=.1cm] (zx2m) {} (x);
		\draw[-angle 90,relative, out=-40,in=-140] (z) to node[above right] {$a$}  (x);
		\draw[densely dotted] (zx1m) to  (zx2m);
	\end{scope}
	\end{tikzpicture}
\caption{The exchange matrix and quiver for the $(a,a,a)$ cluster algebra}
\label{fig: aaa}
\end{figure}

If $a=0$ or $1$, then $\A_{a,a,a}$ is acyclic.\footnote{In fact, finite-type of type $A_1\times A_1\times A_1$ or $A_3$, respectively.}  For $a\geq2$, $\A_{a,a,a}$ is not acyclic by Theorem \ref{thm: nonacyclicrank3}.

\begin{rem}\label{rem: Markov}
The case $a=2$ was specifically investigated in \cite{BFZ05}, as the first example of a cluster algebra for which $\A\neq\U$, and it has been subsequently connected to the Teichm\"uller space of the the once-punctured torus and to the theory of Markov triples \cite[Appendix B]{FG07} ($\A_{2,2,2}$ is sometimes called the \emph{Markov cluster algebra}).  See Section \ref{section: MwP} for the analog of $\U_{2,2,2}$ with a specific choice of frozen variables.
\end{rem}

\begin{prop}
For $a\geq2$, the upper cluster algebra $\U_{a,a,a}$ is generated over $\ZZ$ by
\[ x_1,x_2,x_3, M:= \frac{x_1^a+x_2^a+x_3^a}{x_1x_2x_3}\]
The ideal of relations among these generators is generated by
\[  x_1x_2x_3M-x_1^a-x_2^a-x_3^a=0\]
\end{prop}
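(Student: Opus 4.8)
The plan is to apply the toolkit from Section 4 with $S$ the proposed subring of $\U_{a,a,a}=\U_\x$ (we use total coprimality from Proposition \ref{prop: 3totallycoprime} and Theorem \ref{thm: tpupper}), and verify the hypotheses of Lemma \ref{lemma: tccriterion}. First I would record the one-step mutations in the seed $(\x,\sB_{a,a,a})$: one computes $x_1' = (x_2^a + x_3^a)/x_1$, $x_2' = (x_1^a+x_3^a)/x_2$, $x_3' = (x_1^a + x_2^a)/x_3$. A short calculation shows each $x_i'$ lies in $\ZZ[x_1,x_2,x_3,M]$ — indeed $x_1' = x_2 x_3 M - x_1^{a-1}$, and symmetrically for the others — so the lower bound $L_\x$ is contained in $S := \ZZ[x_1,x_2,x_3,M]$. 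Conversely $M = (x_1 x_1' + x_1^a)/(x_1 x_2 x_3)$ is visibly a Laurent polynomial in $\x$, and also in each once-mutated cluster (e.g. writing $M = x_1' /(x_2 x_3) + x_1^{a-1}/(x_2 x_3)$ shows it is Laurent in $x_1', x_2, x_3$, and similarly for the other two mutations); hence $M \in \U_\x$, so $L_\x \subseteq S \subseteq \U_\x$.

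Next I would establish the presentation. By Lemma \ref{lemma: presentation}, with the single generator $y_1 = M$ of denominator $x_1 x_2 x_3$ and numerator $N_1 = x_1^a + x_2^a + x_3^a$, the ring $S$ is $\widetilde{\S}/I$ where $\widetilde{\S} = \ZZ[x_1,x_2,x_3,M]$ (a genuine polynomial ring in $M$) and $I = (\widetilde{\S}\,(x_1 x_2 x_3 M - x_1^a - x_2^a - x_3^a) : \widetilde{\S}(x_1x_2x_3)^\infty)$. The content of the claimed presentation is that this saturation is already principal, generated by $g := x_1 x_2 x_3 M - x_1^a - x_2^a - x_3^a$. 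This holds because $g$ is irreducible (it is linear in $M$ with unit leading coefficient over the domain $\ZZ[x_1,x_2,x_3]$, so $\widetilde{\S}/(g) \cong \ZZ[x_1,x_2,x_3]$, which is a domain) and $x_1 x_2 x_3$ is not a zero divisor modulo $(g)$; hence $(\,(g) : (x_1x_2x_3)^\infty) = (g)$. So $S \cong \ZZ[x_1,x_2,x_3,M]/(g)$ as claimed.

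For the final step I would verify criterion (2) of Lemma \ref{lemma: tccriterion}: that $S$ is normal and $\codim(S\D_\x) \geq 2$. Normality is immediate since $S \cong \ZZ[x_1,x_2,x_3]$ is a polynomial ring; so it remains to bound the codimension of $S\D_\x$. The ideal $\D_\x$ is generated by $x_1 x_2 x_3$ and the three products $x_1' x_2 x_3$, $x_1 x_2' x_3$, $x_1 x_2 x_3'$; under the isomorphism $S\cong \ZZ[x_1,x_2,x_3]$ these become (using $x_i'$ as computed above) $x_1 x_2 x_3$, $x_2^a + x_3^a$, $x_1^a + x_3^a$, $x_1^a + x_2^a$. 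I expect the main obstacle to be the purely commutative-algebra check that this ideal has height $\geq 2$ in $\ZZ[x_1,x_2,x_3]$ — equivalently, that $V(S\D_\x)$ has codimension $\geq 2$ in $\mathbb{A}^3_\ZZ$. One argues it contains no height-one prime: a height-one prime would be principal (the ring is a UFD) and would have to divide $x_1 x_2 x_3$, so it is $(x_i)$ for some $i$ or $(p)$ for a prime $p\in\ZZ$; but modulo $(x_1)$ the generator $x_2^a + x_3^a$ is nonzero, and similarly for the other $(x_i)$, while modulo $(p)$ the generator $x_1x_2x_3$ is nonzero — so no such prime contains $S\D_\x$. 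Hence $\codim(S\D_\x)\geq 2$, criterion (2) applies, and $S = \U_\x = \U_{a,a,a}$, completing the proof.
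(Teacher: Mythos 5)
There is a genuine gap in your argument, and it undermines the last two steps. You claim that because $g := x_1x_2x_3M - x_1^a - x_2^a - x_3^a$ "is linear in $M$ with unit leading coefficient over $\ZZ[x_1,x_2,x_3]$", the quotient $\widetilde{\S}/(g)$ is isomorphic to $\ZZ[x_1,x_2,x_3]$. But the leading coefficient of $g$ in $M$ is $x_1x_2x_3$, which is \emph{not} a unit in $\ZZ[x_1,x_2,x_3]$, so you cannot eliminate $M$; the quotient $\S = \widetilde{\S}/(g)$ is a hypersurface in $\mathbb{A}^4_\ZZ$, not a polynomial ring. (The irreducibility of $g$ is still true — argue via Gauss's lemma, noting $g$ has content $1$ since $x_1 \nmid x_1^a+x_2^a+x_3^a$ — and this does give that $(g)$ is prime, hence saturated with respect to $x_1x_2x_3$, so the presentation step is salvageable.) However, your proof of normality ("immediate since $S\cong\ZZ[x_1,x_2,x_3]$") collapses, and so does your codimension argument, which leans on $\ZZ[x_1,x_2,x_3]$ being a UFD and height-one primes being principal — neither of which you have for the actual $\S$.

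The paper sidesteps normality entirely: it uses criterion $(3)$ of Lemma \ref{lemma: tccriterion} rather than $(2)$. Since $\S$ is a hypersurface, it is a complete intersection, hence Cohen--Macaulay, hence $S2$ — no normality needed. For the codimension bound, the paper works directly in the hypersurface: it shows any prime $P\supseteq \D_\x$ must contain one of four explicit prime ideals, and exhibits a length-two regular sequence inside each. If you want to keep criterion $(2)$, you would have to actually prove $\S$ is normal (e.g.\ by Serre's criterion on the hypersurface, which requires checking the singular locus — a nontrivial computation, especially over $\ZZ$ at primes dividing $a$); switching to the $S2$/Cohen--Macaulay route is strictly easier here. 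Finally, a small note: your formula $x_1' = x_2x_3M - x_1^{a-1}$ is correct, and when you justify $M\in\U_\x$ via $M = x_1'/(x_2x_3) + x_1^{a-1}/(x_2x_3)$ you should also substitute $x_1 = (x_2^a+x_3^a)/x_1'$ into the second summand to exhibit a genuine Laurent expression in the mutated cluster.
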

\begin{proof}
Since $a^3-3a^2+4\geq0$ for $a\geq2$, Theorem \ref{thm: nonacyclicrank3} implies that this cluster algebra is not acyclic, and Proposition \ref{prop: 3totallycoprime} implies that it is totally coprime.

The element $x_1x_2x_3M-x_1^a-x_2^a-x_3^a$ in $\ZZ[x_1,x_2,x_3,M]$ is irreducible. The ideal it generates is prime and therefore it is saturated with respect to $x_1x_2x_3$.  By Lemma \ref{lemma: presentation},
\[ \S = \ZZ[x_1,x_2,x_3,M]/<x_1x_2x_3M-x_1^a-x_2^a-x_3^a>\]
is the subring of $\ZZ[x_1^{\pm1},x_2^{\pm1},x_3^{\pm1}]$ generated by $x_1,x_2,x_3$ and $M$.

The following identities imply that $L_\x\subset \S$.
\[ x_1' = \frac{x_2^a+x_3^a}{x_1} = x_2x_3M-x_1^a,\;\;\;
 x_2' = \frac{x_1^a+x_3^a}{x_2} = x_1x_3M-x_2^a,\;\;\;
  x_3' = \frac{x_1^a+x_2^a}{x_3} = x_1x_2M-x_3^a\]
The following identities imply that $\S\subset \U_\x$.
\[ M= \frac{x_1'^{a+1}+(x_2^a+x_3^a)^a}{x_1'^ax_2x_3}= \frac{x_2'^{a+1}+(x_1^a+x_3^a)^a}{x_1x_2'^ax_3}= \frac{x_3'^{a+1}+(x_1^a+x_2^a)^a}{x_1x_2x_3'^a}\]
Since $\S$ is a hypersurface, it is a complete intersection, and so it Cohen-Macaulay \cite[Prop. 18.13]{Eis95}, and in particular it is $S2$.\footnote{A ring is Cohen-Macaulay if and only if it satisfies the $Sn$ property for every $n$.}

Let $P$ be a prime ideal in $\S$ containing
\[ \D_\x = <x_1x_2x_3,x_1'x_2x_3,x_1x_2'x_3,x_1x_2x_3'>\]
Since $x_1x_2x_3\subset P$, at least one of $\{x_1,x_2,x_3\}\in P$ by primality.  If any two $x_i,x_j$ are, then
\[ x_k^a=x_ix_jx_kM-x_i^a-x_j^a\in P \Rightarrow x_k\in P\]
If only one $x_i\in P$, then $x_i'x_jx_k\in P$ implies that $x_i'\in P$.  Then $x_i+x_i'^a=x_jx_kM\in P$, which implies $M\in P$.  Additionally, $x_j^a+x_k^a=x_ix_jx_kM-x_i^a\in P$.

Therefore, $P$ contains at least one of the four prime ideals
\begin{equation}\label{eq: components}
 <x_1,x_2,x_3>, <x_1,x_2^a+x_3^a,M>, <x_2,x_1^a+x_3^a,M>, <x_3,x_1^a+x_2^a,M>
\end{equation}
Since $\{x_1,x_2\}$, $\{x_1,M\}$, $\{x_2,M\}$, and $\{x_3,M\}$ are each regular sequences in $S$, it follows that $\codim(\D_\x)\geq2$.  By Lemma \ref{lemma: tccriterion}, $\S=\U$.
\end{proof}
\begin{rem}
The final step of the proof has some interesting geometric content.  In this case, $\D=\D_\x$, and the four prime ideals \eqref{eq: components} are the minimal primes containing $\D$.  Geometrically, they define the irreducible components of $V(\D)$; that is, the complement of the cluster tori.

One of these components ($x_1=x_2=x_3=0$) is an affine line on which every cluster variable vanishes.  The other $3$ components ($x_i=x_j^a+x_k^a=M=0$) are geometrically reducible; over $\mathbb{C}$ they each decompose into $a$-many affine lines.  Over $\mathbb{C}$,  $V(\D)$ consists of $3a+1$-many affine lines, which intersect at the point $x_1=x_2=x_3=M=0$ and nowhere else.
\end{rem}

\subsection{The $(3,3,2)$ cluster algebra}

Consider the initial seed in Figure \ref{fig: 332}.  The cluster algebra $\A_{3,3,2}$ is non-acyclic, by Theorem \ref{thm: nonacyclicrank3}.  Up to permuting the vertices, it is the only non-acyclic $\A_{a,b,c}$ with $0\leq a,b,c\leq 3$ besides $\A_{2,2,2}$ and $\A_{3,3,3}$.

\begin{figure}[h!t]
	\begin{tikzpicture}
	\begin{scope}
		\node (array) at (0,0) {$\sB=\gmat{0 & -3 & 2 \\ 3 & 0 & -3 \\ -2 & 3 & 0 }$};
	\end{scope}
	\begin{scope}[xshift=2.5in]
		\node (label) at (-2,0) {$\Q=$};
		\node[mutable] (x) at (90:1) {$x_1$};
		\node[mutable] (y) at (210:1) {$x_2$};
		\node[mutable] (z) at (-30:1) {$x_3$};
		\draw[-angle 90,relative, out=15,in=165] (x) to (y);
		\draw[-angle 90,relative, out=-15,in=-165] (x) to (y);
		\draw[-angle 90,relative, out=-45,in=-495] (x) to (y);
		\draw[-angle 90,relative, out=15,in=165] (y) to (z);
		\draw[-angle 90,relative, out=-15,in=-165] (y) to (z);
		\draw[-angle 90,relative, out=-45,in=-495] (y) to (z);
		\draw[-angle 90,relative, out=15,in=165] (z) to (x);
		\draw[-angle 90,relative, out=-15,in=-165] (z) to (x);
	\end{scope}
	\end{tikzpicture}
\caption{The exchange matrix and quiver for the $(3,3,2)$ cluster algebra.}
\label{fig: 332}
\end{figure}
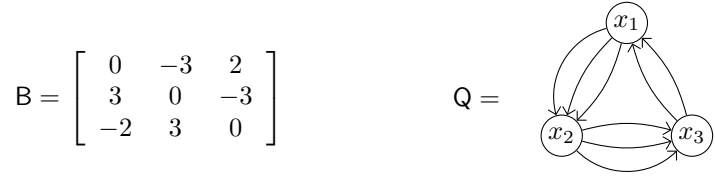

\begin{prop}
The upper cluster algebra $\U_{3,3,2}$ is generated over $\ZZ$ by
\[x_1,x_2,x_3,\]
\[y_0=\frac{x_2^3+x_1^2+x_3^2}{x_1x_3},\;\;\; y_1=\frac{x_1x_2^3+x_2^3x_3+x_1^3+x_3^3}{x_1x_2x_3},\]
\[y_2=\frac{x_2^6+2x_1^2x_2^3+x_1x_2^3x_3+2x_2^3x_3^2+x_1^4+x_1^3x_3+x_1x_3^3+x_3^4}{x_1^2x_2x_3^2},\]
\[y_3=\frac{x_2^{9}+3x_1^2x_2^6+3x_2^6x_3^2+3x_1^4x_2^3+3x_1^2x_2^3x_3^2+3x_2^3x_3^4+x_1^6+2x_1^3x_3^3+x_3^6}{x_1^3x_2^2x_3^3}.\]
The ideal of relations is generated by the elements
\[y_2^2=y_0y_3+2y_3, \;\;\; y_0^2=x_2y_2-y_0+2 \]
\[y_1y_2=x_1y_3+x_3y_3, \;\;\;y_0y_2=x_2y_3+y_2\]
\[y_0y_1=x_1y_2+x_3y_2-2y_1, \;\;\; x_1y_0+x_3y_0=x_2y_1+x_1+x_3\]
\[x_2^2y_2=x_1x_3y_3+3x_2y_0-y_1^2-3x_2, \;\;\; x_2^2y_0=x_1x_3y_2+x_2^2-x_1y_1-x_3y_1\]
\[x_2^3+x_3^2y_0=x_2x_3y_1-x_1^2+x_1x_3.\]
\end{prop}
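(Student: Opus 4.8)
The plan is to run the algorithm of Section~\ref{section: algorithm} with the eight displayed Laurent polynomials as the conjectural generating set, following the template of the proof for $\A_{a,a,a}$ in Section~\ref{section: aaa}. First, since none of $3,3,2$ is smaller than $2$ and $3\cdot3\cdot2-3^2-3^2-2^2+4=0\not<0$, Theorem~\ref{thm: nonacyclicrank3} shows that $\A_{3,3,2}$ is not mutation-equivalent to an acyclic seed, and Proposition~\ref{prop: 3totallycoprime} then shows it is totally coprime; by Theorem~\ref{thm: tpupper} it therefore suffices to identify $\U_{3,3,2}=\U_\x$.

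Next I would set $\widetilde{\S}=\ZZ[x_1,x_2,x_3,y_0,y_1,y_2,y_3]$ (there are no frozen variables, so the coefficient ring is $\ZZ$) and let $\widetilde I$ be the $\widetilde{\S}$-ideal generated by the four denominator-clearing elements $y_0x_1x_3-(x_2^3+x_1^2+x_3^2)$, $y_1x_1x_2x_3-(x_1x_2^3+x_2^3x_3+x_1^3+x_3^3)$, and the analogues for $y_2$ and $y_3$. Using a computer algebra system one computes the saturation $I=(\widetilde I:\widetilde{\S}(x_1x_2x_3)^\infty)$ and confirms it is generated by the nine relations in the statement. By Lemma~\ref{lemma: presentation}, $\S:=\widetilde{\S}/I$ is then the sub-$\ZZ$-algebra of $\ZZ[x_1^{\pm1},x_2^{\pm1},x_3^{\pm1}]$ generated by $x_1,x_2,x_3,y_0,y_1,y_2,y_3$.

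I would then verify the sandwich $L_\x\subseteq\S\subseteq\U_\x$. For $L_\x\subseteq\S$ it suffices to express the one-step mutations in terms of the generators; reading off the exchange matrix gives $x_1'=(x_2^3+x_3^2)x_1^{-1}=x_3y_0-x_1$, $x_2'=(x_1^3+x_3^3)x_2^{-1}=x_1x_3y_1-x_2^2(x_1+x_3)$, and $x_3'=(x_1^2+x_2^3)x_3^{-1}=x_1y_0-x_3$, all visibly in $\S$. For $\S\subseteq\U_\x$, since $x_1,x_2,x_3\in\U_\x$ by the Laurent phenomenon it is enough to check that each $y_k$ lies in each of the three Laurent rings $\ZZ[x_1'^{\pm1},x_2^{\pm1},x_3^{\pm1}]$, $\ZZ[x_1^{\pm1},x_2'^{\pm1},x_3^{\pm1}]$, $\ZZ[x_1^{\pm1},x_2^{\pm1},x_3'^{\pm1}]$, a finite substitution-and-cancellation check performed by computer. (Equivalently: if the $y_k$ were produced by iterating the saturation criterion of Lemma~\ref{lemma: satcriterion} starting from $L_\x$, then $L_\x\subseteq\S\subseteq\U_\x$ is automatic.)

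Finally I would invoke Lemma~\ref{lemma: tccriterion}. With $f=x_1x_2x_3$ and $\S\D_\x$ the $\S$-ideal generated by $x_1x_2x_3$, $x_1'x_2x_3$, $x_1x_2'x_3$, $x_1x_2x_3'$ (substituting the polynomial forms of $x_1',x_2',x_3'$ above), the most direct check is criterion~$(5)$: compute $(\S f:(\S\D_\x)^\infty)$ on the presentation $\widetilde{\S}/I$ and confirm it equals $\S f$. One may instead use criterion~$(3)$, that $\S$ is $S2$ and $\codim(\S\D_\x)\geq2$, or criterion~$(4)$, that $Ext^1_{\S}(\S/\S\D_\x,\S)=0$; each is a routine but substantial Gr\"obner-basis computation. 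I expect the main obstacle to be exactly this final step, together with the identification of $I$ with the nine displayed relations: unlike the $\A_{a,a,a}$ case, $\S$ is not a hypersurface, so neither its $S2$ property nor the codimension of $V(\D_\x)$ is transparent by hand, and one must rely on a computer algebra system to certify both the presentation and whichever criterion of Lemma~\ref{lemma: tccriterion} is chosen.
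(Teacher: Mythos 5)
Your proposal is correct and follows essentially the same algorithmic template as the paper's proof: establish non-acyclicity and total coprimality via Theorems~\ref{thm: nonacyclicrank3}, \ref{thm: tpupper} and Proposition~\ref{prop: 3totallycoprime}; compute the ideal of relations with Lemma~\ref{lemma: presentation} and a computer; verify $L_\x\subseteq\S\subseteq\U_\x$ by exhibiting the mutated variables in $\S$ and rewriting the $y_k$ in the adjacent Laurent rings; and conclude via criterion~(5) of Lemma~\ref{lemma: tccriterion}. (Your expression $x_3'=x_1y_0-x_3$ is a correct, slightly simpler alternative to the paper's $x_3'=-x_3y_0+x_2y_1+x_1$; both reduce to $(x_1^2+x_2^3)/x_3$.)
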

\begin{proof}
Since $a=3, b=3, c=2,$ and $abc-a^2-b^2-c^2+4=0$, Theorem \ref{thm: nonacyclicrank3} implies that $\A$ is not acyclic.  Thus, Proposition \ref{prop: 3totallycoprime} asserts that $\A$ is totally coprime.  Let $\S$ be the domain in $\K(\A)$ generated by the seven listed elements.  Using Lemma \ref{lemma: presentation} and a computer, we see that the ideal of relations in $\S$ is generated by the elements above.

The following identities imply that $L_\x\subseteq \S$.
\[ x_1'=x_3y_0-x_1,\;\;\;x_2'=x_1x_3y_1-x_1x_2^2-x_2^2x_3,\;\;\;x_3'=-x_3y_0+x_2y_1+x_1\]
The following identities imply that $\S\subseteq \U_\x$.
\[y_0=\frac{x_2^3+x_3^2+x_1'^2}{x_3x_1'} = \frac{(x_1^3+x_3^3)^3+(x_1^2+x_3^2)x_2'^3}{x_1x_3x_2'^3} = \frac{x_1^2+x_2^3+x_3'^2}{x_1x_3'}\]
\begin{eqnarray}y_1 & = & \frac{(x_2^3+x_3^2)^2+x_1'^2(x_2^3+x_3x_1')}{x_2x_3x_1'^2}\nonumber=
\frac{(x_1+x_3)^3(x_1^2-x_1x_3+x_3^2)^2+x_2'^3}{x_1x_3x_2'^2}\nonumber\\ &= & \frac{(x_1^2+x_2^3)^2+x_3'^2(x_2^3+x_1x_3')}{x_1x_2x_3'^2} \nonumber
\end{eqnarray}
\begin{eqnarray}y_2 &= &  \frac{(x_2^3+x_3^2)^2+x_3(x_2^3+x_3^2)x_1'+2x_2^3x_1'^2+x_3x_1'^3+x_1'^4}{x_2x_3^2x_1'^2}\nonumber\\ & = & \frac{(x_1^3+x_3^3)^5+(2x_1^2+x_1x_3+2x_3^2)(x_1^3+x_3^2)^2x_2'^3+(x_1+x_3)x_2'^6}{x_1^2x_3^2x_2'^5} \nonumber\\ & = & \frac{(x_1^2+x_2^3-x_1x_3'+x_3'^2)(x_1^2+x_2^3+2x_1x_3'+x_3'^2)}{x_1^2x_2x_3'^2} \nonumber
\end{eqnarray}
\begin{eqnarray}y_3 &= & \frac{(x_2^3+x_3^2-x_3x_1'+x_1'^2)^2(x_2^3+x_3^2+2x_3x_1'+x_1'^2)}{x_2^2x_3^3x_1'^3} \nonumber\\ & = &  \frac{((x_1+x_3)^3(x_1^2-x_1x_3+x_3^2)^2+x_2'^3)^2((x_1+x_3)(x_1^2-x_1x_3+x_3^2)^3+x_2'^3)}{x_1^3x_3^3x_2'^7} \nonumber\\ & = & \frac{(x_1^2+x_2^3-x_1x_3'+x_3'^2)^2(x_1^2+x_2^3+2x_1x_3'+x_3'^2)}{x_1^3x_2^2x_3'^3} \nonumber
\end{eqnarray}

A computer verifies that $(\S x_1x_2x_3:(\S\D_\x)^\infty)=\S x_1x_2x_3$.  By Lemma \ref{lemma: tccriterion}, $\S=\U$.
\end{proof}

\begin{rem}
This example serves of a `proof of concept' for the algorithm of Section \ref{section: algorithm}.  The above generating set has no distinguishing properties known to the authors; it is merely the generating set produced by an implementation of this algorithm.
\end{rem}

\section{Larger examples}\label{section: examples2}

We explicitly present a few other non-acyclic upper cluster algebras.

\subsection{The Markov cluster algebra with principal coefficients}\label{section: MwP}

Consider the initial seed in Figure \ref{fig: MwP}.  As in the previous section, this seed has 3 mutable variables, but it has \emph{principal coefficients} \--- a frozen variable for each mutable variable, and the exchange matrix extended by an identity matrix.  Results about principal coefficients and why they are important can be found in \cite{FZ07}.

\begin{figure}[h!t]
	\begin{tikzpicture}
	\begin{scope}
		\node (array) at (0,0) {$\sB=\gmat{0 & -2 & 2 \\ 2 & 0 & -2 \\ -2 & 2 & 0 \\ 1 & 0 & 0 \\ 0 & 1 & 0 \\ 0 & 0 & 1}$};
	\end{scope}
	\begin{scope}[xshift=2.5in,yshift=-.5cm]
		\node (label) at (-2.5,.5) {$\Q=$};
		\node[mutable] (x1) at (90:1) {$x_1$};
		\node[mutable] (x2) at (210:1) {$x_2$};
		\node[mutable] (x3) at (-30:1) {$x_3$};
        \node[frozen] (f1) at (90:2) {$f_1$};
        \node[frozen] (f2) at (210:2) {$f_2$};
        \node[frozen] (f3) at (-30:2) {$f_3$};
		\draw[-angle 90,relative, out=15,in=165] (x1) to (x2);
		\draw[-angle 90,relative, out=-15,in=-165] (x1) to (x2);
		\draw[-angle 90,relative, out=15,in=165] (x2) to (x3);
		\draw[-angle 90,relative, out=-15,in=-165] (x2) to (x3);
		\draw[-angle 90,relative, out=15,in=165] (x3) to (x1);
		\draw[-angle 90,relative, out=-15,in=-165] (x3) to (x1);
        \draw[-angle 90] (x1) to (f1);
        \draw[-angle 90] (x2) to (f2);
        \draw[-angle 90] (x3) to (f3);
	\end{scope}	
	\end{tikzpicture}
\caption{The exchange matrix and quiver for the Markov cluster algebra with principal coefficients.}
\label{fig: MwP}
\end{figure}
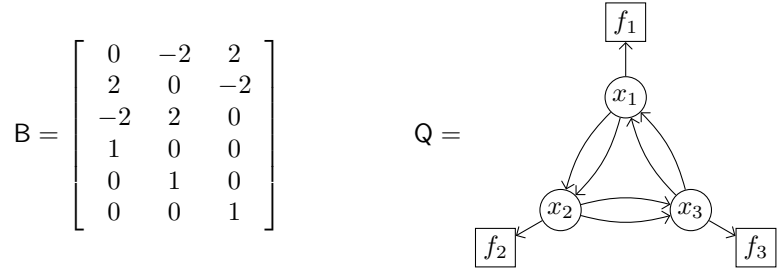

\begin{prop}
The upper cluster algebra $\U$ is generated over $\ZZ[f_1^{\pm1},f_2^{\pm1},f_3^{\pm1}]$ by
\[ x_1,x_2,x_3,\]
\[ L_1=\frac{x_2^2+f_2f_3x_3^2+f_3x_1^2}{x_2x_3}, L_2=\frac{x_3^2+f_3f_1x_1^2+f_1x_2^2}{x_3x_1}, L_3=\frac{x_1^2+f_1f_2x_2^2+f_2x_3^2}{x_1x_2}\]
\[ y_1=\frac{f_1L_1^2+(f_1f_2f_3-1)^2}{x_1}, y_2=\frac{f_2L_2^2+(f_1f_2f_3-1)^2}{x_2}, y_3=\frac{f_3L_3^2+(f_1f_2f_3-1)^2}{x_3}\]
The ideal of relations is generated by the elements
\[ x_1x_2L_3 = x_1^2+f_1f_2x_2^2+f_2x_3^2,\;\;\; y_1y_2L_3 = f_1f_2y_1^2+y_2^2+f_1y_3^2\]
\[ x_2x_3L_1 = x_2^2+f_2f_3x_3^2+f_3x_1^2,\;\;\; y_2y_3L_1 = f_2f_3y_2^2+y_3^2+f_2y_1^2\]
\[ x_3x_1L_2 = x_3^2+f_3f_1x_1^2+f_1x_2^2,\;\;\; y_3y_1L_2 = f_3f_1y_3^2+y_1^2+f_3y_2^2\]
\[ f_3x_1L_3-x_3L_1=\alpha x_2,\;\;\; f_1L_1y_3-L_3y_1=\alpha y_2\]
\[ f_1x_2L_1-x_1L_2=\alpha x_3,\;\;\; f_2L_2y_1-L_1y_2=\alpha y_3\]
\[ f_2x_3L_2-x_2L_3=\alpha x_1,\;\;\; f_3L_3y_2-L_2y_3=\alpha y_1\]
\[ x_1L_2L_3 = f_1f_2x_2L_2+f_1x_1L_1+x_3L_3,\;\;\; y_1L_2L_3 = y_2L_2+f_1y_1L_1+f_1f_3y_3L_3\]
\[ x_2L_3L_1 = f_2f_3x_3L_3+f_2x_2L_2+x_1L_1,\;\;\; y_2L_3L_1 = y_3L_3+f_2y_2L_2+f_2f_1y_1L_1\]
\[ x_3L_1L_2 = f_3f_1x_1L_1+f_3x_3L_3+x_2L_2,\;\;\; y_3L_1L_2 = y_1L_1+f_3y_3L_3+f_3f_2y_2L_2\]
\[ x_2y_3=f_2f_3L_2L_3-\alpha L_1,\;\;\; x_3y_1=f_3f_1L_3L_1-\alpha L_2,\;\;\; x_1y_2=f_1f_2L_1L_2-\alpha L_3\]
\[ x_1y_3 = L_1L_3+f_2\alpha L_2,\;\;\; x_2y_1 = L_2L_1+f_3\alpha L_3,\;\;\; x_3y_2 = L_3L_2+f_1\alpha L_1\]
\[ x_1y_1 = f_1L_1^2+\alpha^2,\;\;\; x_2y_2 = f_2L_2^2+\alpha^2,\;\;\; x_3y_3 = f_3L_3^2+\alpha^2\]
\[ L_1L_2L_3-f_1L_1^2-f_2L_2^2-f_3L_3^2=\alpha^2\]
where $\alpha:=f_1f_2f_3-1$.
\end{prop}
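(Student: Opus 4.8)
The plan is to follow the template of the two preceding propositions, ending with an application of Lemma~\ref{lemma: tccriterion}. First I would record that the $3\times3$ principal part of the exchange matrix is the Markov matrix $\sB_{2,2,2}$, and that mutation of $\sB$ induces ordinary mutation on this principal part: in the update $\sB_{ij}'=\sB_{ij}+\tfrac12(|\sB_{ik}|\sB_{kj}+\sB_{ik}|\sB_{kj}|)$ with $i,j$ mutable, only entries of $\sB^0$ occur, so the principal part evolves autonomously in the Markov mutation class. Since $\sB_{2,2,2}$ is not mutation-equivalent to an acyclic matrix (Theorem~\ref{thm: nonacyclicrank3} with $a=b=c=2$, where $abc-a^2-b^2-c^2+4=0$), no seed of $\A$ is acyclic, so Proposition~\ref{prop: 3totallycoprime} applies --- its proof permits frozen variables, per the remark following it --- and $\A$ is totally coprime. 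Hence $\U=\U_\x$ by Theorem~\ref{thm: tpupper}, and $\U_\x$ is normal by Remark~\ref{rem: internormal}, so the full strength of Lemma~\ref{lemma: tccriterion} is available.

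Next, let $\S$ be the sub-$\ZZ[f_1^{\pm1},f_2^{\pm1},f_3^{\pm1}]$-algebra of $\ZZ[x_1^{\pm1},x_2^{\pm1},x_3^{\pm1},f_1^{\pm1},f_2^{\pm1},f_3^{\pm1}]$ generated by the nine elements $x_1,x_2,x_3,L_1,L_2,L_3,y_1,y_2,y_3$ (with $\alpha=f_1f_2f_3-1$); it is Noetherian, being a finitely generated $\ZZ$-algebra. Applying Lemma~\ref{lemma: presentation}: form the polynomial ring $\widetilde\S$ on these nine symbols over $\ZZ[f_i^{\pm1}]$, let $\widetilde I$ be generated by the six tautological relations --- denominator monomial times symbol minus numerator, one for each of $L_1,L_2,L_3,y_1,y_2,y_3$ --- and set $I=(\widetilde I:\widetilde\S(x_1x_2x_3)^\infty)$; a computer then confirms that $I$ is generated by the displayed list of relations, so $\S\cong\widetilde\S/I$ with exactly that presentation.

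The two bounding inclusions come next. For $L_\x\subseteq\S$ it suffices to write $x_1',x_2',x_3'$ as polynomials in the nine generators: for instance $x_1x_1'=f_1x_2^2+x_3^2=x_1(x_3L_2-f_1f_3x_1)$ using the relation $x_3x_1L_2=x_3^2+f_3f_1x_1^2+f_1x_2^2$, so $x_1'=x_3L_2-f_1f_3x_1\in\S$, and the other two follow from the cyclic symmetry. For $\S\subseteq\U_\x$, the elements $x_1,x_2,x_3$ and $f_1^{\pm1},f_2^{\pm1},f_3^{\pm1}$ already lie in $\U_\x$ by the Laurent phenomenon, so it remains to exhibit each of $L_1,L_2,L_3,y_1,y_2,y_3$ as a Laurent polynomial in each of the three one-step mutated clusters $\{x_1'^{\pm1},x_2^{\pm1},x_3^{\pm1}\}$, $\{x_1^{\pm1},x_2'^{\pm1},x_3^{\pm1}\}$, and $\{x_1^{\pm1},x_2^{\pm1},x_3'^{\pm1}\}$; the cyclic symmetry $x_i\mapsto x_{i+1}$, $f_i\mapsto f_{i+1}$, $L_i\mapsto L_{i+1}$, $y_i\mapsto y_{i+1}$ of the seed reduces this to about a third of the identities.

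Finally, with $f=x_1x_2x_3$, the ideal $\S\D_\x$ is generated by $x_1x_2x_3,x_1'x_2x_3,x_1x_2'x_3,x_1x_2x_3'$ in $\S$; a computer verifies the saturation identity $(\S f:(\S\D_\x)^\infty)=\S f$, and criterion (5) of Lemma~\ref{lemma: tccriterion} then gives $\S=\U_\x=\U$. The main obstacle is computational rather than conceptual: with nine generators and roughly thirty relations, the Gr\"obner-basis computations behind both the relation ideal $I$ and the saturation $(\S f:(\S\D_\x)^\infty)$ are the heavy step, and the care required lies in choosing a term order that keeps the saturation feasible and in organizing the Laurent-expansion checks so as to exploit the threefold symmetry.
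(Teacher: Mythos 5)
Your proposal is correct and follows the paper's template almost step for step: set up $\S$ via Lemma~\ref{lemma: presentation}, verify $L_\x\subseteq\S\subseteq\U_\x$ by explicit identities, and close via criterion~(5) of Lemma~\ref{lemma: tccriterion} with a computer computation of the saturation. The one place you genuinely diverge is the argument for total coprimality. The paper simply observes that the $6\times3$ exchange matrix $\sB$ has full rank (its bottom $3\times3$ block is the identity) and invokes Theorem~\ref{thm: fullrank}. You instead note that the principal part $\sB^0=\sB_{2,2,2}$ mutates autonomously, is non-acyclic in the sense of Theorem~\ref{thm: nonacyclicrank3} (since $abc-a^2-b^2-c^2+4=0\not<0$ with $a=b=c=2$), hence no seed of $\A$ is acyclic, and then apply Proposition~\ref{prop: 3totallycoprime} together with the remark that its proof tolerates frozen variables. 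Both are sound; the full-rank route is the one-line check, while your non-acyclicity route is more portable to settings where the extended matrix is not full rank but the principal part is still non-acyclic. The remaining details you sketch (the identity $x_1'=x_3L_2-f_1f_3x_1$ derived from the relation $x_3x_1L_2=x_3^2+f_3f_1x_1^2+f_1x_2^2$, the use of the cyclic symmetry of the quiver to cut the Laurent-expansion checks to a third, and the final saturation verification with $f=x_1x_2x_3$) all match the paper's proof.
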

\begin{proof}
The exchange matrix $\sB$ for the initial seed above contains a submatrix that is a scalar multiple of the identity, thus $\sB$ is full rank.  Theorem \ref{thm: fullrank} asserts that $\A$ is totally coprime.  Let $\S$ be the domain in $\K(\A)$ generated by the twelve listed elements.  Using Lemma \ref{lemma: presentation} and a computer, we see that the ideal of relations in $\S$ is generated by the elements above.

The following identities imply that $L_\x\subseteq \S$.
\[ x_1'=x_3L_2-f_3f_1x_1,\;\;\;x_2'=x_1L_3-f_1f_2x_2,\;\;\;x_3'=x_2L_1-f_2f_3x_3\]
The following identities imply that $\S\subseteq \U_\x$.
\[ L_1= \frac{x_1'^2x_2^2+x_1'^2x_3^2f_2f_3+f_3(x_3^2+x_2^2f_1)^2}{x_1'^2x_2x_3} = \frac{x_1^2+x_3^2f_2+f_3x_2'^2}{x_2'x_3} = \frac{x_3'^2+f_2f_3(x_2^2+x_1^2f_3)}{x_2x_3'}\]
\[ L_2= \frac{x_2'^2x_3^2+x_2'^2x_1^2f_3f_1+f_1(x_1^2+x_3^2f_2)^2}{x_2'^2x_3x_1} = \frac{x_2^2+x_1^2f_3+f_1x_3'^2}{x_3'x_1} = \frac{x_1'^2+f_3f_1(x_3^2+x_2^2f_1)}{x_3x_1'}\]
\[ L_3= \frac{x_3'^2x_1^2+x_3'^2x_2^2f_1f_2+f_2(x_2^2+x_1^2f_3)^2}{x_3'^2x_1x_2} = \frac{x_3^2+x_2^2f_1+f_2x_1'^2}{x_1'x_2} = \frac{x_2'^2+f_1f_2(x_1^2+x_3^2f_2)}{x_1x_2'}\]
\begin{eqnarray}y_1 &= &  {\textstyle \frac{ x_1'^4x_2^2+x_1'^4x_3^2f_1f_2^2f_3^2+2x_1'^2(x_3^2+x_2^2f_1)x_2^2f_1f_3+f_1f_3^2(x_3^2+x_2^2f_1)^3+2x_1'^2(x_3^2+x_2^2f_1)x_3^2f_1f_2f_3^2}{x_1'^2x_2^2x_3^2} }\nonumber\\ & = &  \frac{f_1(x_1^2+x_3^2f_2+f_3x_2'^2)^2+(f_1f_2f_3-1)^2x_2'^2x_3^2}{x_1x_2'^2x_3^2} \nonumber\\ & = & \frac{f_1(x_1(x_3'^3+x_3'f_2f_3(x_2^2+x_1^2f_3)))^2+(f_1f_2f_3-1)^2x_1^2x_2^2x_3'^4}{x_1^3x_2^2x_3'^4} \nonumber
\end{eqnarray}
\begin{eqnarray}y_2 &= & {\textstyle  \frac{x_2'^4x_3^2+x_2'^4x_1^2f_2f_3^2f_1^2+2x_2'^2(x_1^2+x_3^2f_2)x_3^2f_2f_1+f_2f_1^2(x_1^2+x_3^2f_2)^3+2x_2'^2(x_1^2+x_3^2f_2)x_1^2f_2f_3f_1^2}{x_2'^2x_3^2x_1^2} }\nonumber\\ & = &  \frac{f_2(x_2^2+x_1^2f_3+f_1x_3'^2)^2+(f_2f_3f_1-1)^2x_3'^2x_1^2}{x_2x_3'^2x_1^2} \nonumber\\ & = & \frac{f_2(x_2(x_1'^3+x_1'f_3f_1(x_3^2+x_2^2f_1)))^2+(f_2f_3f_1-1)^2x_2^2x_3^2x_1'^4}{x_2^3x_3^2x_1'^4} \nonumber
\end{eqnarray}
\begin{eqnarray}y_3 &= & {\textstyle  \frac{x_3'^4x_1^2+x_3'^4x_2^2f_3f_1^2f_2^2+2x_3'^2(x_2^2+x_1^2f_3)x_1^2f_3f_2+f_3f_2^2(x_2^2+x_1^2f_3)^3+2x_3'^2(x_2^2+x_1^2f_3)x_2^2f_3f_1f_2^2}{x_3'^2x_1^2x_2^2} }\nonumber\\ & = &  \frac{f_3(x_3^2+x_2^2f_1+f_2x_1'^2)^2+(f_3f_1f_2-1)^2x_1'^2x_2^2}{x_3x_1'^2x_2^2} \nonumber\\ & = & \frac{f_3(x_3(x_2'^3+x_2'f_1f_2(x_1^2+x_3^2f_2)))^2+(f_3f_1f_2-1)^2x_3^2x_1^2x_2'^4}{x_3^3x_1^2x_2'^4} \nonumber
\end{eqnarray}

A computer verifies that $(\S x_1x_2x_3:(\S\D_\x)^\infty)=\S x_1x_2x_3$.  By Lemma \ref{lemma: tccriterion}, $\S=\U$.
\end{proof}

This presentation is enough to demonstrate an unfortunate pathology of upper cluster algebras.
If $\sB$ is an exchange matrix, and $\sB^\dagger$ is an exchange matrix obtained from $\sB$ by deleting some rows corresponding to frozen variables, then there are natural ring maps
\[ s:\A(\sB)\rightarrow \A(\sB^\dagger),\;\;\; s:\U(\sB)\rightarrow \U(\sB^\dagger)\]
which send the deleted frozen variables to $1$.  It may be naively hoped that the map on upper cluster algebras is a surjection, but this does not always happen.
\begin{coro}
For $\sB$ be as in Figure \ref{fig: MwP} and $\sB_{2,2,2}$ as in Figure \ref{fig: rank3}, the map
\[ s:\U(\sB)\rightarrow \U(\sB_{2,2,2})\]
for which $s(x_i)=x_i$ and $s(f_i)=1$ is not a surjection.
\end{coro}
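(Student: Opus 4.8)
The plan is to compute the image of $s$ explicitly and exhibit a regular function in $\U(\sB_{2,2,2})$ that does not lie in it. Write $T := s(\U(\sB))$, and let $\alpha := f_1f_2f_3-1$ and $M := (x_1^2+x_2^2+x_3^2)/(x_1x_2x_3)$ be the extra generator of $\U_{2,2,2}$ from Section \ref{section: aaa}. Since $s(f_i)=1$, the coefficient ring $\ZZ[f_1^{\pm1},f_2^{\pm1},f_3^{\pm1}]$ maps onto $\ZZ$, so $T$ is the $\ZZ$-subalgebra of $\U_{2,2,2}$ generated by the images of the twelve listed generators. Substituting $f_1=f_2=f_3=1$ (so $\alpha\mapsto 0$) into the formulas for the $L_i$ and $y_i$ yields the identities $s(L_i)=x_iM$ and $s(y_i)=x_iM^2$ for $i=1,2,3$. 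Hence $T=\ZZ[x_1,x_2,x_3,\,x_1M,x_2M,x_3M,\,x_1M^2,x_2M^2,x_3M^2]\subseteq \U(\sB_{2,2,2})=\U_{2,2,2}$, whose presentation is recorded in Section \ref{section: aaa}.

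Next I would show $M\notin T$. Every algebra generator of $T$ other than $1$ is divisible in $\U_{2,2,2}$ by one of $x_1,x_2,x_3$, so any product of at least one generator lies in the ideal $I:=(x_1,x_2,x_3)\U_{2,2,2}$; therefore $T\subseteq \ZZ\cdot 1+I$. Using the presentation $\U_{2,2,2}=\ZZ[x_1,x_2,x_3,M]/(x_1x_2x_3M-x_1^2-x_2^2-x_3^2)$, the defining relation vanishes modulo $(x_1,x_2,x_3)$, so $\U_{2,2,2}/I\cong\ZZ[M]$, a polynomial ring in which $M$ maps to the variable $M$ while $\ZZ\cdot 1+I$ maps into the constants $\ZZ$. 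Since $M\notin\ZZ$ in $\ZZ[M]$, it follows that $M\notin\ZZ\cdot 1+I$, hence $M\notin T$. As $M\in\U(\sB_{2,2,2})$, the map $s$ is not surjective.

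The only step with computational content is the specialization in the first paragraph; everything after that is immediate once $T$ is identified. The main point to be careful about there is that $\alpha=f_1f_2f_3-1$ specializes to $0$, which is exactly what collapses each $y_i=(f_iL_i^2+\alpha^2)/x_i$ to $x_iM^2$ rather than to something with an extra constant term. One should also note that $\ZZ[M]$ is genuinely a polynomial ring, equivalently that $M$ is transcendental over $\ZZ$, which is clear since $M$ is a non-constant rational function of $x_1,x_2,x_3$; this is what guarantees $M$ escapes the constants in the quotient $\U_{2,2,2}/I$.
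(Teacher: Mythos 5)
Your proof is correct. The paper states this corollary with no explicit proof, presenting it as an observation that falls out of comparing the two presentations; your argument supplies precisely the missing verification. The specialization $s(L_i)=x_iM$, $s(y_i)=x_iM^2$ is right (with $\alpha\mapsto 0$ doing the key work in collapsing the $y_i$), and the conclusion via $T\subseteq\ZZ+(x_1,x_2,x_3)\U_{2,2,2}$ together with $\U_{2,2,2}/(x_1,x_2,x_3)\cong\ZZ[M]$ is the natural way to see $M\notin T$.

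One small remark: the final sentence about $M$ being transcendental over $\ZZ$ is doing no work and is slightly misdirected. What guarantees that $\U_{2,2,2}/(x_1,x_2,x_3)$ is the genuine polynomial ring $\ZZ[M]$ (with $M$ going to the variable, not a constant) is exactly the ideal computation you already carried out: the defining relation $x_1x_2x_3M-x_1^2-x_2^2-x_3^2$ lies in $(x_1,x_2,x_3)\subset\ZZ[x_1,x_2,x_3,M]$, so $(x_1,x_2,x_3,\,\text{relation})=(x_1,x_2,x_3)$ and the quotient is $\ZZ[x_1,x_2,x_3,M]/(x_1,x_2,x_3)=\ZZ[M]$. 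Transcendence of $M$ over $\ZZ$ inside $\U_{2,2,2}$ is a consequence of this, not an independent input.
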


\subsection{The `dreaded torus'}

Consider the initial seed in Figure \ref{fig: DT}.

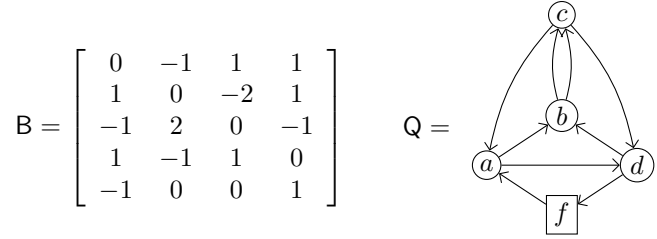
\begin{figure}[h!t]
	\begin{tikzpicture}
	\begin{scope}
		\node (array) at (0,0) {$\sB=\gmat{0 & -1 & 1 & 1 \\ 1 & 0 & -2 & 1 \\ -1 & 2 & 0 & -1 \\ 1 & -1 & 1 & 0 \\ -1 & 0 & 0 & 1}$};
	\end{scope}
\begin{scope}[xshift=2in,yshift=-.5cm]
		\node (label) at (-1.8,.5) {$\Q=$};
		\node[mutable] (a) at (-1,0) {$a$};
		\node[mutable] (b) at (0,.66) {$b$};
		\node[mutable] (c) at (0,2) {$c$};
		\node[mutable] (d) at (1,0) {$d$};
		\node[frozen] (f) at (0,-.66) {$f$};
		\draw[-angle 90] (f) to (a);
		\draw[-angle 90] (a) to (b);
		\draw[-angle 90, out=105,in=255] (b) to (c);
		\draw[-angle 90,relative,out=-15,in=195] (c) to (a);
		\draw[-angle 90] (a) to (d);
		\draw[-angle 90] (d) to (b);
		\draw[-angle 90, out=75,in=285] (b) to (c);
		\draw[-angle 90,relative,out=15,in=165] (c) to (d);
		\draw[-angle 90] (d) to (f);
\end{scope}
	\end{tikzpicture}
\caption{The exchange matrix and quiver for the dreaded torus cluster algebra.}
\label{fig: DT}
\end{figure}

\begin{prop}
The upper cluster algebra $\U$ is generated over $\ZZ[f^{\pm1}]$ by
\[a,b,c,d\]
\[ X=\frac{b^2+c^2+ad}{bc},\;\;\; Y= \frac{ad^2+ac^2+bcf+b^2d}{acd},\;\;\; Z= \frac{a^2d+ac^2+bcf+b^2d}{abd}.\]
The ideal of relations is generated by the elements
\[ bcX=b^2+c^2+ad,\;\;\; cY-bZ=d-a \]
\[ acX-adZ=ab-bd-cf,\;\;\; bdX-adY=cd-ac-bf\]
\[ bXZ-aX-bY-cZ=f.\]
\end{prop}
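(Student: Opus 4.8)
The plan is to apply Lemma~\ref{lemma: tccriterion} to the seed $(\x,\sB)$ of Figure~\ref{fig: DT}, following the pattern of the preceding propositions. Here $\x=\{a,b,c,d\}$ is the cluster of mutable variables ($\nm=4$, $\nr=5$), $f$ is the frozen variable, and $\S$ denotes the sub-$\ZZ[f^{\pm1}]$-algebra of $\ZZ[a^{\pm1},b^{\pm1},c^{\pm1},d^{\pm1},f^{\pm1}]$ generated by $a,b,c,d,X,Y,Z$. First I would check that $\A$ is totally coprime: one verifies directly that $\sB$ has full rank (its skew-symmetric principal part $\sB^{0}$ is an invertible $4\times 4$ matrix), so Theorem~\ref{thm: fullrank} applies. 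Hence $\U=\U_{\x}$, and it suffices to establish $L_{\x}\subseteq\S\subseteq\U_{\x}$ together with the saturation condition $(5)$ of Lemma~\ref{lemma: tccriterion}.

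Next I would extract the presentation via Lemma~\ref{lemma: presentation}. Set $\widetilde{\S}=\ZZ[f^{\pm1}][a,b,c,d,X,Y,Z]$ and let $\widetilde{I}$ be the ideal generated by the three denominator-clearing relations
\[
bcX-(b^{2}+c^{2}+ad),\qquad acdY-(ad^{2}+ac^{2}+bcf+b^{2}d),\qquad abdZ-(a^{2}d+ac^{2}+bcf+b^{2}d),
\]
so that $\S\cong\widetilde{\S}/I$ with $I=(\widetilde{I}:\widetilde{\S}(abcd)^{\infty})$; a Gr\"obner-basis computation confirms that $I$ is generated by the five displayed relations. For $L_{\x}\subseteq\S$ I would record explicit polynomial formulas for the one-step mutations in the listed generators: from $bcX=b^{2}+c^{2}+ad$ one obtains $b'=cX-b$ and $c'=bX-c$, while the relations $acX-adZ=ab-bd-cf$ and $bdX-adY=cd-ac-bf$ give $a'=b-cX+dZ$ and $d'=c-bX+aY$. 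For $\S\subseteq\U_{\x}$ I would exhibit, for each of $X,Y,Z$ and each mutation direction, a Laurent expansion of that generator in the corresponding one-step-mutated cluster (for $X$ this follows directly from rewriting $bcX=b^{2}+c^{2}+ad$; for $Y$ and $Z$ one writes out a few further Laurent polynomials); since $a,b,c,d$ and $f^{\pm1}$ already lie in $\U_{\x}$ by the Laurent phenomenon, this gives the inclusion.

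Finally, with $\D_{\x}=\langle abcd,\,a'bcd,\,ab'cd,\,abc'd,\,abcd'\rangle$, I would have a computer verify $(\S abcd:(\S\D_{\x})^{\infty})=\S abcd$, so that criterion~$(5)$ of Lemma~\ref{lemma: tccriterion} yields $\S=\U_{\x}=\U$. I expect the main obstacle to be computational rather than conceptual: confirming the relation ideal $I$ and carrying out the saturation are the substantive steps, and --- as with the $(3,3,2)$ example --- arriving at the generating set $\{X,Y,Z\}$ in the first place requires running the iterative algorithm of Section~\ref{section: algorithm} rather than any structural shortcut.
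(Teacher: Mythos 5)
Your proposal matches the paper's proof essentially step for step: full rank via Theorem~\ref{thm: fullrank} to get total coprimality, Lemma~\ref{lemma: presentation} plus a computer to confirm the relation ideal, the same four mutation identities for $L_\x\subseteq\S$, Laurent re-expansions of $X,Y,Z$ for $\S\subseteq\U_\x$, and a machine check of the saturation to invoke Lemma~\ref{lemma: tccriterion}. The only cosmetic difference is that you note $\sB^0$ itself is already invertible (via a Pfaffian computation) rather than just asserting full rank of the $5\times 4$ matrix $\sB$; both are correct.
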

\begin{proof}
The exchange matrix $\sB$ is full rank, and so Theorem \ref{thm: fullrank} asserts that $\A$ is totally coprime.  Let $\S$ be the domain in $\K(\A)$ generated by the eight listed elements.  Using Lemma \ref{lemma: presentation} and a computer, we see that the ideal of relations in $\S$ is generated by the elements above.

The following identities imply that $L_\x\subseteq \S$.
\[ a'=-cX+dZ+b,\;\;\;b'=cX-b,\;\;\;c'=bX-c,\;\;\;d'=-bX+aY+c\]
The following identities imply that $\S\subseteq \U_\x$.
\begin{eqnarray}X & = & \frac{(b^2+c^2)a'+(bd+cf)d}{a'bc}  = \frac{c^2+ad+b'^2}{cb'}   \nonumber\\
& = &\frac{(c^2+b^2)d'+(ca+bf)a}{d'cb} = \frac{b^2+da+c'^2}{bc'} \nonumber
\end{eqnarray}
\begin{eqnarray}Y & = & \frac{d^2+c^2+a'b}{cd}  =  \frac{b'^2ad^2+b'^2ac^2+b'(c^2+ad)cf+(c^2+ad)^2d}{ab'^2cd}  \nonumber\\
& = &\frac{c'^2d+a(b^2+ad)+c'bf}{ac'd} = \frac{a(ac+bf)+d'^2c+d'b^2}{acd'} \nonumber
\end{eqnarray}
\begin{eqnarray}Z & = & \frac{a^2+b^2+d'c}{ba}  =  \frac{c'^2da^2+c'^2db^2+c'(b^2+da)bf+(b^2+da)^2a}{dc'^2ba}  \nonumber\\
& = &\frac{b'^2a+d(c^2+da)+b'cf}{db'a} = \frac{d(db+cf)+a'^2b+a'c^2}{dba'} \nonumber
\end{eqnarray}

A computer verifies that $(\S abcd:(\S\D_\x)^\infty)=\S abcd$.  By Lemma \ref{lemma: tccriterion}, $\S=\U$.
\end{proof}

This presentation makes it easy to explore the geometry of $Spec(\U)$.  One interesting result is the following, which can be proven by computer verification.
\begin{prop}
The induced deep ideal $\U\D$ is trivial.
\end{prop}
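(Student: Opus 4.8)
The assertion is that $\U\D$ equals the unit ideal $\U$, equivalently $V(\U\D)=\emptyset$, equivalently $1\in\U\D$; by Remark \ref{rem: uppergeom} this is the geometric statement that every point of $Spec(\U)$ lies on a cluster torus. The plan is to reduce this to a finite computation. By the preceding proposition $\U$ is a finitely generated algebra over the Noetherian ring $\ZZ[f^{\pm1}]$, hence Noetherian; and $\D$ is generated by the products of the mutable variables over all clusters, so $\U\D$ is in fact generated by finitely many of these products. Thus it suffices to produce a finite set of clusters whose mutable-variable monomials generate $\U$, and to check that they do.

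Concretely, I would work in the presentation $\U=\ZZ[f^{\pm1}][a,b,c,d,X,Y,Z]/I$ of the preceding proposition. Each cluster variable is a Laurent polynomial in $a,b,c,d$ by the Laurent phenomenon, so each cluster monomial lies in $\ZZ[a^{\pm1},b^{\pm1},c^{\pm1},d^{\pm1},f^{\pm1}]$ and can be rewritten as a polynomial in $a,b,c,d,X,Y,Z$ modulo $I$ — algorithmically, either by clearing denominators and saturating as in Lemma \ref{lemma: presentation}, or by a normal-form reduction modulo the generators of $I$. For the initial seed this monomial is $abcd$, and for its one-step mutations it is $a'bcd$, $ab'cd$, $abc'd$, $abcd'$, whose expressions in the generators follow from the identities for $a',b',c',d'$ recorded in the proof of the preceding proposition. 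Having rewritten a chosen finite family of cluster monomials this way, one forms the ideal of $\U$ they generate and tests, by a Gr\"obner-basis computation, whether it contains $1$; if not, one enlarges the family (the depth-two mutations, then depth three, and so on) and repeats. By Noetherianity of $\U$ this succeeds at some finite stage exactly when $\U\D=\U$, and the computer confirms that it does.

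The subtlety — and the main obstacle — is that the monomials of the initial seed and its one-step mutations do not by themselves suffice. For instance, $(a,b,c,d,X,Y,Z)=(1,0,1,-1,0,-2,-f)$ satisfies all the relations in $I$ and so defines a point of $Spec(\U)$; at this point $a'=f$, $b'=0$, $c'=-1$, $d'=-1$, and every one of $abcd$, $a'bcd$, $ab'cd$, $abc'd$, $abcd'$ vanishes, so the ideal they generate is proper. Clusters several mutations away are genuinely required, and since there is no a priori bound on how far one must go, the cost lies in expressing far-out cluster monomials in the generators $a,b,c,d,X,Y,Z$ and in the Gr\"obner computations over $\ZZ[f^{\pm1}]$. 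A presentation-friendly alternative, should the needed depth be small, is to compute the finitely many minimal primes of the ideal generated by the cluster monomials already on hand and, for each such prime, exhibit a single further cluster monomial not contained in it — converting the claim into a short finite check in the style of the final step of the proof in Section \ref{section: aaa}. Either way, the content is that no prime of $\U$ contains all cluster monomials.
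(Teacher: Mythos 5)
Your proposal is correct and follows essentially the same approach as the paper, which simply asserts the result ``can be proven by computer verification'' with no further detail. Your explicit counterexample --- the point $(a,b,c,d,X,Y,Z)=(1,0,1,-1,0,-2,-f)$, which does satisfy all five defining relations and at which $abcd$ and all four one-step-mutated cluster monomials vanish (since $b=b'=0$) --- is a genuine and useful refinement: it demonstrates that the lower deep ideal $\D_\x$ is strictly smaller than $\D$ at the level of $\U$-ideals, so that the verification necessarily involves clusters at mutation distance at least two from the initial seed.
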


As a consequence, $Spec(\U)$ is covered by the cluster tori $\{Spec(\ZZ[x_1^{\pm1},...,x_\nr^{\pm1}])\}$ coming from the clusters of $\U$.  Since affine schemes are always quasi-compact,\footnote{\cite[Exercise 2.13.b]{Har77}.} this cover has a finite subcover; that is, some finite collection of cluster tori cover $Spec(\U)$.

\begin{rem}
This cluster algebra comes from a marked surface with boundary (via the construction of \cite{FST08}); specifically, the torus with one boundary component and a marked point on that boundary component.  In this perspective, the additional generators $X,Y,Z$ correspond to loops.

The epithet `the dreaded torus' was coined by Gregg Musiker in a moment of frustation \--- among cluster algebras of surfaces, it lies in the grey area between having enough marked points to be well-behaved \cite{MulLA,MSW11} and having few enough marked points to be provably badly-behaved (like the Markov cluster algebra).  As a consequence, it is still not clear whether $\A=\U$ in this case (despite the presentation for $\U$ above).
\end{rem}

\section{Acknowledgements}

This paper would not have been possible without helpful insight from S. Fomin, J. Rajchgot, D. Speyer, and K. Smith.  This paper owes its existence to the VIR seminars in Cluster Algebras at LSU, and to the second author's time at MSRI's Thematic program on Cluster Algebras.

\bibliography{MyNewBib}{}

\def\cprime{$'$} \def\cprime{$'$} \def\cprime{$'$} \def\cprime{$'$}
  \def\cprime{$'$} \def\cprime{$'$}
\providecommand{\bysame}{\leavevmode\hbox to3em{\hrulefill}\thinspace}
\providecommand{\MR}{\relax\ifhmode\unskip\space\fi MR }
\providecommand{\MRhref}[2]{%
  \href{http://www.ams.org/mathscinet-getitem?mr=#1}{#2}
}
\providecommand{\href}[2]{#2}
\begin{thebibliography}{MSW11}

\bibitem[BBH11]{BBH11}
Andre Beineke, Thomas Br{\"u}stle, and Lutz Hille, \emph{Cluster-cylic quivers
  with three vertices and the {M}arkov equation}, Algebr. Represent. Theory
  \textbf{14} (2011), no.~1, 97--112, With an appendix by Otto Kerner.
  \MR{2763295 (2012a:16028)}

\bibitem[BFZ05]{BFZ05}
Arkady Berenstein, Sergey Fomin, and Andrei Zelevinsky, \emph{Cluster algebras.
  {III}. {U}pper bounds and double {B}ruhat cells}, Duke Math. J. \textbf{126}
  (2005), no.~1, 1--52. \MR{2110627 (2005i:16065)}

\bibitem[Eis95]{Eis95}
David Eisenbud, \emph{Commutative algebra}, Graduate Texts in Mathematics, vol.
  150, Springer-Verlag, New York, 1995, With a view toward algebraic geometry.
  \MR{MR1322960 (97a:13001)}

\bibitem[FG06]{FG06}
Vladimir Fock and Alexander Goncharov, \emph{Moduli spaces of local systems and
  higher {T}eichm\"uller theory}, Publ. Math. Inst. Hautes \'Etudes Sci.
  (2006), no.~103, 1--211. \MR{2233852 (2009k:32011)}

\bibitem[FG07]{FG07}
Vladimir~V. Fock and Alexander~B. Goncharov, \emph{Dual {T}eichm\"uller and
  lamination spaces}, Handbook of {T}eichm\"uller theory. {V}ol. {I}, IRMA
  Lect. Math. Theor. Phys., vol.~11, Eur. Math. Soc., Z\"urich, 2007,
  pp.~647--684. \MR{2349682 (2008k:32033)}

\bibitem[FP13]{FP}
Sergey Fomin and Pavlo Pylyavsky, \emph{Tensor diagrams and cluster algebras},
  preprint, arxiv: 1210.1888 (2013).

\bibitem[FST08]{FST08}
Sergey Fomin, Michael Shapiro, and Dylan Thurston, \emph{Cluster algebras and
  triangulated surfaces. {I}. {C}luster complexes}, Acta Math. \textbf{201}
  (2008), no.~1, 83--146. \MR{2448067 (2010b:57032)}

\bibitem[FZ02]{FZ02}
Sergey Fomin and Andrei Zelevinsky, \emph{Cluster algebras. {I}.
  {F}oundations}, J. Amer. Math. Soc. \textbf{15} (2002), no.~2, 497--529
  (electronic). \MR{1887642 (2003f:16050)}

\bibitem[FZ03]{FZ03}
\bysame, \emph{Cluster algebras. {II}. {F}inite type classification}, Invent.
  Math. \textbf{154} (2003), no.~1, 63--121. \MR{2004457 (2004m:17011)}

\bibitem[FZ07]{FZ07}
\bysame, \emph{Cluster algebras. {IV}. {C}oefficients}, Compos. Math.
  \textbf{143} (2007), no.~1, 112--164. \MR{2295199 (2008d:16049)}

\bibitem[GLS08]{GLS08}
Christof Geiss, Bernard Leclerc, and Jan Schr{\"o}er, \emph{Partial flag
  varieties and preprojective algebras}, Ann. Inst. Fourier (Grenoble)
  \textbf{58} (2008), no.~3, 825--876. \MR{2427512 (2009f:14104)}

\bibitem[GSV03]{GSV03}
Michael Gekhtman, Michael Shapiro, and Alek Vainshtein, \emph{Cluster algebras
  and {P}oisson geometry}, Mosc. Math. J. \textbf{3} (2003), no.~3, 899--934,
  1199, \{Dedicated to Vladimir Igorevich Arnold on the occasion of his 65th
  birthday\}. \MR{2078567 (2005i:53104)}

\bibitem[Har77]{Har77}
Robin Hartshorne, \emph{Algebraic geometry}, Springer-Verlag, New York, 1977,
  Graduate Texts in Mathematics, No. 52. \MR{MR0463157 (57 \#3116)}

\bibitem[MSW11]{MSW11}
Gregg Musiker, Ralf Schiffler, and Lauren Williams, \emph{Positivity for
  cluster algebras from surfaces}, Adv. Math. \textbf{227} (2011), no.~6,
  2241--2308. \MR{2807089}

\bibitem[Mul13]{MulLA}
Greg Muller, \emph{Locally acyclic cluster algebras}, Adv. Math. \textbf{233}
  (2013), 207--247. \MR{2995670}

\bibitem[Sco06]{Sco06}
Joshua~S. Scott, \emph{Grassmannians and cluster algebras}, Proc. London Math.
  Soc. (3) \textbf{92} (2006), no.~2, 345--380. \MR{2205721 (2007e:14078)}

\bibitem[Spe13]{Speyer}
David Speyer, \emph{An infinitely generated upper cluster algebra.}, preprint,
  arxiv: 1305.6867 (2013).

\end{thebibliography}
\bibliographystyle{amsalpha}

\end{document}